\theoremstyle{THkey}\newtheorem{mytheorem}{XXXXX}}
\newcommand{\twhat}[1]{\mathbf{\widehat{\text{$#1$}}}}
\newcommand{\twtilde}[1]{\mathbf{\widetilde{\text{$#1$}}}}
\newcommand{\opt}{^{\star}}
\newcommand{\X}{\mathcal{S}}
\newcommand{\cX}{\mathcal{X}}
\newcommand{\cI}{\mathcal{I}}
\newcommand{\A}{\mathcal{A}}
\newcommand{\cA}{\mathcal{A}}
\newcommand{\cP}{\mathcal{P}}
\newcommand{\cU}{\mathcal{U}}
\newcommand{\cW}{\mathcal{W}}
\newcommand{\cE}{\mathcal{E}}
\newcommand{\cM}{\mathcal{M}}
\newcommand{\E}{\mathbb{E}}
\newcommand{\N}{\mathbb{N}}
\newcommand{\R}{\mathbb{R}}
\newcommand{\tr}{^{\top}}
\newcommand{\PiS}{\Pi_{\sf S}}
\newcommand{\PiSD}{\Pi_{\sf SD}}
\newcommand{\PiH}{\Pi_{\sf H}}
\newcommand{\ext}{{\sf ext}}
\newcommand{\conv}{{\sf conv}}
\newcommand{\card}{{\sf Card}}
\begin{document}


\RUNAUTHOR{Grand-Cl\'ement, Si and Wang}

\RUNTITLE{Tractable robust MDPs}

\TITLE{Tractable Robust Markov Decision Processes}

\ARTICLEAUTHORS{%
\AUTHOR{Julien Grand-Cl\'ement}
\AFF{Information Systems and Operations Management Department,
Ecole des Hautes Etudes Commerciales de Paris, \EMAIL{grand-clement@hec.fr}}

\AUTHOR{Nian Si}
\AFF{Department of Industrial Engineering and Decision Analytics,
Hong Kong University of Science and Technology, \EMAIL{niansi@ust.hk}}

\AUTHOR{Shengbo Wang}
\AFF{Department of Management Science and Engineering,
Stanford University, \EMAIL{shengbo.wang@stanford.edu}}
} 

\ABSTRACT{%
In this paper we investigate the tractability of robust Markov Decision Processes (RMDPs) under various structural assumptions on the uncertainty set. Surprisingly, we show that in all generality (i.e. without any assumption on the instantaneous rewards), s-rectangular and sa-rectangular uncertainty sets are the {\em only} models of uncertainty that are tractable. 
Our analysis also shows that existing non-rectangular models, including r-rectangular uncertainty and new generalizations, are only {\em weakly} tractable in that they require an additional structural assumption that the instantaneous rewards do not depend on the next state, and in this case they are equivalent to rectangular models, which severely undermines their significance and usefulness.
Interestingly, our proof techniques rely on identifying a novel {\em simultaneous solvability property}, which we show is at the heart of several important properties of RMDPs, including the existence of stationary optimal policies and dynamic programming-based formulations. The simultaneous solvability property enables a unified approach to studying the tractability of all existing models of uncertainty, rectangular and non-rectangular alike.
}%

\FUNDING{}



\KEYWORDS{Markov Decision Processes; Robust Optimization; } 

\maketitle

\section{Introduction}\label{sec:Intro}
Markov Decision Processes (MDPs) represent a powerful framework for modeling sequential decision problems~\citep{puterman2014markov}, where the decision-maker optimizes for the total return obtained over a given horizon. When the model parameters are known, finding an optimal policy can be done efficiently with value iteration, policy iteration, linear programming or gradient descent. However, it has been observed repeatedly that misestimating the model parameters may lead to severe deterioration of the performance of the computed policies~\cite{delage2010percentile,goh2018data}. {\em Robust} MDPs ameliorate this issue by considering a pessimistic formulation and optimizing for a policy that maximizes the worst-case return over the {\em uncertainty set}, which represents the set of all plausible scenarios for the realizations of the parameters' values~\citep{iyengar2005robust,nilim2005robust,wiesemann2013robust}.

In all generality, i.e. when no assumption is made on the uncertainty set, even the {\em policy evaluation} problem is computationally difficult, i.e., computing the worst-case of a stationary policy may be NP-hard~\citep{wiesemann2013robust,goh2018data}, and optimal policies may have to be history-dependent~\citep{wiesemann2013robust}, which severely limit their implementability. Therefore, an active line of research in the literature on robust MDPs focuses on finding {\em sufficient} conditions on the uncertainty set such that the corresponding robust MDP model becomes tractable. The seminal papers~\citep{iyengar2005robust,nilim2005robust} prove that robust MDPs are tractable under the {\em sa-rectangularity} assumption, which states that the uncertainty set is a Cartesian product over all pairs of state-action. This means that the transition probabilities out of every state-action pair can be chosen independently. The authors in \cite{wiesemann2013robust} show that robust MDPs are tractable under the more general assumption of {\em s-rectangularity}, where the uncertainty set is a Cartesian product over all states, i.e., the transition probabilities can be chosen independently across different states. The sa-rectangular and s-rectangular models of uncertainty are by far the most used in the literature, and non-rectangular models of uncertainty have been thought to be intractable, but recent works have highlighted the existence of other interesting models of uncertainty.

In particular, \cite{goh2018data} introduce the {\em r-rectangularity assumption} that generalizes sa-rectangularity and assumes that the transition probabilities are convex combinations of some underlying factors that can be chosen independently. The authors in \cite{goh2018data} show that for the r-rectangular model of uncertainty, the policy evaluation problem is {\em weakly tractable}, i.e., it is tractable {\em when the rewards only depend on the current state-action pair} and not on the next state. The authors in \cite{goyal2022robust} show that stationary deterministic optimal policies exist and can be computed efficiently for convex compact r-rectangular uncertainty sets under the same assumption on the rewards.  This leads to a surprising situation: the sa-rectangular model of uncertainty admits two distinct generalizations (s-rectangularity and r-rectangularity), with s-rectangular models being tractable (without any assumption on the rewards), whereas r-rectangular models are only weakly tractable (i.e., they require the assumption that the rewards are independent of the next states).

In the case of finite horizon problems, k-rectangular models~\citep{mannor2016robust}  
require history-dependent optimal policies that can be computed efficiently via state augmentation. Another recent non-rectangular model of uncertainty is~$(\xi,\eta)$-uncertainty~\cite{hu2024efficient} where transitions depend on some underlying factors and some coefficients that may both vary (whereas in r-rectangular models, the coefficients are fixed and the factors may vary). The authors in \cite{hu2024efficient}  show that $(\xi,\eta)$-uncertainty leads to efficient computation of the Bellman updates, without focusing on the properties of optimal policies or the tractability of robust MDPs. In this paper, we will use the term {\em rectangular} models to designate s-rectangular or sa-rectangular models of uncertainty, and {\em non-rectangular} models for any other models of uncertainty, e.g. r-rectangular models or $(\xi,\eta)$-uncertainty. We note that this is the original meaning of the word {\em rectangularity} as introduced in \cite{iyengar2005robust,nilim2005robust}.

Overall, the picture is quite mixed regarding the tractability of robust MDPs under different models of uncertainty: several models are currently known, some leading to tractable robust MDPs in all generality (s-rectangularity and sa-rectangularity), some leading to weakly tractable robust MDPs (r-rectangularity), some for which this question has not been studied ($(\xi,\eta)$-uncertainty), some for which this question has only been studied for finite horizon (k-rectangularity), and the proof techniques appear quite different across the papers introducing the different models without any obvious connections.
This raises the following research question:
\begin{mytheorem}[Research Question.] 
What are the models of uncertainty leading to tractable robust MDPs? Is there a unifying approach to studying the tractability of all existing uncertainty models?
\end{mytheorem}
Answering this research question is the main objective of this paper.
Since all existing tractable models of uncertainty (weakly or not)  rely on dynamic programming in some form or another, and since policy evaluation plays a central role in robust MDPs, we will focus on obtaining {\em necessary and sufficient} conditions for the policy evaluation problem to be solvable via dynamic programming. If this is the case in all generality, we will say that the uncertainty set is {\em tractable}; if this is the case when the rewards are independent of the next state, we will say that the uncertainty set is {\em weakly tractable}.

To the best of our knowledge, this paper is the first to focus on {\em necessary and sufficient} conditions, which means that we will be able to characterize {\em all} tractable uncertainty sets, in contrast to prior work focusing on {\em sufficient} conditions and introducing a single model of uncertainty per paper, as in \cite{iyengar2005robust,nilim2005robust} (introducing sa-rectangularity), \cite{wiesemann2013robust} (introducing s-rectangularity) and \cite{goh2018data,goyal2022robust} (introducing r-rectangularity).
Our {\bf main contributions} are as follows.
\paragraph{Rectangular MDPs are the only tractable models.} We show that in all generality, only s-rectangularity and sa-rectangularity are tractable. Our proof techniques relate dynamic programming with the {\em simultaneous solvability property} of a collection of optimization problems over the uncertainty set. These optimization problems appear from the Bellman operator, which operates over the value functions as vectors indexed by the set of states, thereby involving a different optimization problem for each state of the MDP instance. This provides a unifying view on the underlying reasons behind the tractability (or intractability) of uncertainty models.
\paragraph{Weakly tractable RMDPs in some special cases.} We then study the weak tractability of robust MDPs, i.e. under the assumption that the rewards do not depend on the next state. We prove the equivalence of weak tractability with a certain weaker notion of simultaneous solvability property. Our analysis shows that solving policy evaluation via dynamic programming has several implications, including being able to find the best stationary policies and, when the uncertainty set is convex, being able to find optimal policies (among all policies) as well as an equivalence between RMDP instances with stationary and non-stationary adversaries. We then describe several new weakly tractable non-rectangular uncertainty sets not previously considered in the literature, a surprising result given that before our work, there was a paucity of weakly tractable uncertainty sets known (only three were known: sa-rectangularity, s-rectangularity, and r-rectangularity).
\paragraph{New perspectives on robust MDPs.} Overall, our results answer several important open questions in the literature on robust MDPs and shed new light on important aspects of RMDPs, that were previously poorly understood. We summarize some of our main takeaways here:
\begin{itemize}
    \item We emphasize the crucial role of rectangular models for robust MDPs, by proving that it is hopeless to search for other uncertainty models that can be solved via dynamic programming {\em in all generality} (Theorem \ref{th:reformulation adv ssp rsas' - srec} and Theorem \ref{th:reformulation adv ssp rsas' - sarec}).
    \item We show that the tractability of policy evaluation implies that we can efficiently compute optimal stationary policies (Theorem \ref{th:solving max min}), emphasizing that the gist of the difficulty in solving robust MDPs (a max-min problem) lies in solving the policy evaluation problem (the inner minimization problem). We also highlight that the existence of a stationary optimal policy is a consequence of strong duality (Lemma \ref{lem:strong duality implies stationary opt policies}) and provides a very general equivalence result for RMDPs with stationary and non-stationary adversaries (Theorem \ref{th:stationary vs history dependent adversaries}).
    \item In future work on robust MDPs, the (weak) tractability of new models of uncertainty can be verified directly using our necessary and sufficient simultaneous solvability properties as in~\eqref{eq:adv ssp - rsa - srec} (in all generality) and as in~\eqref{eq:adv ssp - rsa - sarec} (under the assumption that the rewards do not depend on the next states). These conditions are simple enough for new uncertainty sets to be (weakly) tractable ``by design", i.e. to be built purposely to satisfy \eqref{eq:adv ssp - rsa - srec} or \eqref{eq:adv ssp - rsa - sarec}.
    \item Non-rectangular models were originally introduced to overcome the conservativeness of the rectangularity assumption. Crucially, we prove that policy evaluation over several existing and novel non-rectangular models of uncertainty (including r-rectangularity and $(\xi,\eta)$-uncertainty) can actually be solved by solving policy evaluation {\em over their s-rectangular or sa-rectangular extensions}, i.e. over the smallest s-rectangular or sa-rectangular uncertainty sets containing the non-rectangular uncertainty set. This shows that the only known non-rectangular models are, in fact, {\em equivalent to rectangular models}, which severely undermines the need for non-rectangular models and the fact that they may be more general than rectangular models.
\end{itemize}

The rest of this paper is organized as follows. We provide important notations in the remainder of this section. In Section \ref{sec:preliminaries} we introduce some preliminaries on robust MDPs and several important models of uncertainty. In Section \ref{sec:ssp} we study the tractability of uncertainty sets in all generality. In Section \ref{sec:weak ssp} we study the weak tractability of uncertainty sets (i.e., under some structural assumption on the rewards). Section \ref{sec:discussion} provides some discussions on the main takeaways of our paper.
\subsection{Notations.}
Given an integer $r \in \N$, we write $[r]$ for the set $\{1,\ldots,r\}$. Given a set $\cE$, we denote $\ext(\cE)$ its set of extreme points, $\conv(\cE)$ its convex hull, and $\Delta(\cE)$ the simplex over $\cE$. If the set $\cE$ is finite, we denote $\card(\cE)$ its cardinality. The canonical basis of $\R^{n}$ is written as $(\bm{e}_{1},...,\bm{e}_{n})$. Given two vectors $\bm{u},\bm{v} \in \R^{n}$, we write $\bm{u} \leq \bm{v}$ for the collection of component-wise inequalities $u_{s} \leq v_{s}, \forall \; s \in \X.$ The usual scalar product $\sum_{s=1}^{n} u_{s}v_{s}$ between vectors $\bm{v},\bm{u} \in \R^{n}$ is denoted interchangeably by $\langle \bm{v},\bm{u} \rangle$ and $\bm{v}\tr\bm{u}$, depending of what is the most readable in a given context. For matrices $\bm{A} \in \R^{n \times m},\bm{B} \in \R^{n \times m}$, we use $\langle \bm{A},\bm{B} \rangle = \sum_{i=1}^{n}\sum_{j=1}^{m} A_{ij}B_{ij}$. We use the convention that $0^0 = 1.$
\section{Preliminaries on robust MDPs}\label{sec:preliminaries}
A robust Markov Decision Process instance is given by the tuple $\cM=\left(\X,\A,\bm{r},\gamma,\bm{\mu},\cP\right)$, where $\X$ and $\A$ are the finite sets of states and actions and $\bm{r} =(r_{sas'}) \in \R^{\X \times \A \times \X}$ is the instantaneous reward obtained at each period. The scalar $\gamma \in (0,1)$ is the {\em discount factor}, which models the diminishing importance of the rewards obtained over time, and the vector $\bm{\mu} \in \Delta(\X)$ is the initial distribution over the set of states. The set $\cP$ is the {\em uncertainty set}, which models all the plausible realizations of the transition probabilities: it is a subset of the set of all transition probabilities, i.e. $\cP \subset \Delta(\X)^{\X \times \A} = \{ \left(\bm{P}_{sa}\right)_{sa} \; | \; \bm{P}_{sa} \in \Delta(\X), \forall \; (s,a) \in \X \times \A\}$. Given a robust MDP instance, the goal of the decision-maker is to find an optimal policy, defined as a solution to the following optimization problem:
\begin{equation}\label{eq:robust mdp}
    \sup_{\pi \in \PiH} \inf_{\bm{P} \in \cP} \bm{\mu}\tr\bm{v}^{\pi,\bm{P}}
\end{equation}
with $\PiH$ the set of all history-dependent policies mapping finite histories to $\Delta(\A)$, and $\bm{v}^{\pi,\bm{P}} \in \R^{\X}$ the value function associated with a pair of policy-transitions $(\pi,\bm{P}) \in \PiH \times \cP$, whose component gives the expected cumulative discounted reward obtained starting from any state:
\begin{equation}\label{eq:value function}
    v^{\pi,\bm{P}}_{s} = \E^{\pi,\bm{P}}\left[ \left. \sum_{t=0}^{+\infty} \gamma^t r_{s_{t}a_{t}s_{t+1}} \; \right| \; s_{0} =s  \right], \forall \; s \in \X.
\end{equation}
The set of {\em stationary policies} that only depend on the current state is denoted by $\PiS := \Delta(\A)^{\X}$.
Note that when $(\pi,\bm{P}) \in \PiS \times \cP$, the vector $\bm{v}^{\pi,\bm{P}}$ is the unique fixed point of the contraction operator $T^{\pi}_{\bm{P}}:\R^{\X} \rightarrow \R^{\X}$, defined as, for $\bm{v} \in \R^{\X}$ and $s \in \X$,
\begin{equation}\label{eq:operator T pi P}
    T^{\pi}_{\bm{P}}(\bm{v})_{s} =  \sum_{a \in \cA} \pi_{sa}\bm{P}_{sa}\tr\left(\bm{r}_{sa}+\gamma \bm{v}\right). 
\end{equation}
With the analogy between robust MDPs and stochastic games~\citep{grand2023beyond,chatterjee2023solving} in mind, we will refer to the infimum over $\bm{P} \in \cP$ as an {\em adversary} choosing a worst-case transition probability, given a policy $\pi$ of the decision-maker.
\paragraph{Solving the policy evaluation problem.} Following the terminology from \cite{wiesemann2013robust}, the {\em policy evaluation} problem \eqref{eq:policy evaluation} consists in computing the worst-case value obtained by a stationary policy, i.e. in solving, for $\pi \in \PiS$, the optimization problem:
\begin{equation}\label{eq:policy evaluation}
    \inf_{\bm{P} \in \cP} \bm{\mu}\tr\bm{v}^{\pi,\bm{P}}
\end{equation}
Being able to efficiently solve the policy evaluation problem for all stationary policy $\pi \in \PiS$ is a desirable and important property for the practical use of a model of uncertainty. For instance, \cite{goh2018data} use robust MDPs to evaluate the worst-case cost between two existing screening guidelines for colorectal cancer; for this purpose, it is sufficient to be able to solve \eqref{eq:policy evaluation}. Solving the policy evaluation problem also arises as a subroutine in recently proposed gradient descent algorithms for robust MDPs, e.g. Algorithm 1 in \cite{wang2023policy} or Algorithm 1 in \cite{li2022first}. 

The authors in \cite{wiesemann2013robust,goh2018data} show that without any assumption on the set $\cP$, solving \eqref{eq:policy evaluation} is NP-hard. In the {\em sa-rectangular} model of uncertainty~\citep{iyengar2005robust,nilim2005robust}, the set $\cP$ can be written as a Cartesian product over all state-action pairs $(s,a) \in \X \times \A$:
\begin{equation*}
    \cP = \times_{(s,a) \in \X \times \A} \cP_{sa}
\end{equation*}
where $\cP_{sa} \subset \Delta(\X)$ is the set of marginals induced by $\cP$ when only considering the transition probabilities out of the pair $(s,a) \in \X \times \A$:
\begin{equation}\label{eq:marginal sa}
    \cP_{sa} := \{\bm{p} \in \Delta(\X) \; | \; \exists \; \bm{P} \in \cP, \bm{P}_{sa} = \bm{p} \}.
\end{equation}
The authors in \citep{iyengar2005robust,nilim2005robust} show that under the sa-rectangularity assumption, the optimization problem \eqref{eq:policy evaluation} can be solved efficiently when the set $\cP$ is compact, and that in this case we have
\begin{equation}\label{eq:adv dpp intro sarec}
    \min_{\bm{P} \in \cP} \bm{\mu}\tr\bm{v}^{\pi,\bm{P}} = \bm{\mu}\tr\twhat{\bm{u}}^{\pi}
\end{equation}
where $\twhat{\bm{u}}^{\pi} \in \R^{\X}$ is defined as the unique vector satisfying the following dynamic programming equations:
\[\twhat{u}^{\pi}_{s} = \sum_{a \in \A} \pi_{sa}\min_{\bm{P} \in \cP} \bm{P}_{sa}\tr\left(\bm{r}_{sa} + \gamma \twhat{\bm{u}}^{\pi} \right), \forall \; s \in \X.\]
Importantly, under the sa-rectangularity assumption, an optimal policy in \eqref{eq:robust mdp} exists and can be chosen stationary and deterministic, i.e. in the set $\PiSD := \A^{\X}$.

As highlighted in the introduction, there are two existing generalizations of sa-rectangularity. In the case of {\em r-rectangularity}~\citep{goh2018data,goyal2022robust}, the transition probabilities in $\cP$ are assumed to be convex combinations of the same underlying $r$ factors for some integer $r \in \N$, i.e., there exists some {\em coefficients} $\bm{u}_{sa} \in \Delta([r])$ for $(s,a) \in \X \times \A$, some {\em factors} $\bm{w}^{i} \in \cW^{i}$ for $i \in [r]$, and some sets $\cW^{i} \subset \Delta(\X)$ such that
\begin{equation}\label{eq:r-rectangular uncertainty}
    \cP = \left\{ \left(\sum_{i=1}^{r} u^{i}_{sa}\bm{w}^{i}\right)_{sa} \; | \; (\bm{w}^{1},...,\bm{w}^{r}) \in \cW^{1} \times ... \cW^{r}\right\}.
\end{equation}
Any sa-rectangular uncertainty set can be reformulated as an r-rectangular uncertainty set with $r=|\X| \times |\A|$ (Proposition 2.1 in \cite{goyal2022robust}). Under the additional assumption that the reward is independent of the next state: $r_{sas'} = r_{sas''}$ for all $(s,a) \in \X \times \A$ and $s',s'' \in \X$, \cite{goyal2022robust} show that one can still solve the policy iteration problem by a fixed point argument and that an optimal policy in \eqref{eq:robust mdp} can be chosen stationary and deterministic. In fact, we will show in Section \ref{sec:weak ssp} that for compact r-rectangular uncertainty sets, the dynamic programming equation~\eqref{eq:adv dpp intro sarec} still holds, simplifying the approach taken in \cite{goh2018data,goyal2022robust}.

A second generalization of sa-rectangularity is {\em s-rectangularity}, introduced in \cite{wiesemann2013robust}. In this model, it is assumed that $\cP$ is equal to the Cartesian product of its marginals across each state $s \in \X$:
\begin{equation*}
    \cP = \times_{s \in \X} \cP_{s}
\end{equation*}
with, for each $s \in \X$,
\begin{equation}\label{eq:marginal s}
    \cP_{s}:= \left\{ \left(\bm{p}_{a}\right)_{a \in \A} \in \Delta(\X)^{\A} \; | \; \exists \; \bm{P} \in \cP, \left(\bm{P}_{sa}\right)_{a \in \A} = \left(\bm{p}_{a}\right)_{a \in \A} \right\}.
\end{equation}
For compact s-rectangular uncertainty sets, \cite{wiesemann2013robust} prove that
\begin{equation}\label{eq:adv dpp intro srec}
    \min_{\bm{P} \in \cP} \bm{\mu}\tr\bm{v}^{\pi,\bm{P}} = \bm{\mu}\tr\bm{u}^{\pi}
\end{equation}
where $\bm{u}^{\pi}\in \R^{\X}$ is defined as the unique vector satisfying the following fixed point equations:
\[u^{\pi}_{s} = \min_{\bm{P} \in \cP} \sum_{a \in \A} \pi_{sa} \bm{P}_{sa}\tr\left(\bm{r}_{sa} + \gamma \bm{u}^{\pi} \right), \forall \; s \in \X.\]
It is important to note that \cite{wiesemann2013robust} show that \eqref{eq:adv dpp intro srec} holds for s-rectangular uncertainty {\em without any structural assumption on the rewards}, in stark contrast to the case of r-rectangular uncertainty. We also note that \cite{wiesemann2013robust} show the existence of a stationary optimal solution to the robust MDP problem for compact convex s-rectangular uncertainty, which can be computed efficiently via dynamic programming.

\paragraph{The rectangular extensions of an uncertainty set.}
Overall, we note that for all the widely-studied models of uncertainty, the policy evaluation \eqref{eq:policy evaluation} can be solved efficiently by computing the fixed point of a certain operator:
\begin{itemize}
    \item For compact sa-rectangular models and compact r-rectangular models (the latter requiring the additional assumption that the rewards are independent of the next state), one can solve the policy evaluation problem \eqref{eq:policy evaluation} by leveraging \eqref{eq:adv dpp intro sarec}, i.e., by computing the fixed point of the contracting operator $\twhat{T}^{\pi}:\R^{\X} \rightarrow \R^{\X}$, defined for $\bm{v} \in \R^{\X}$ and $s \in \X$ as
    \begin{equation}\label{eq:operator hat T pi}
        \twhat{T}^{\pi}(\bm{v})_{s} =  \sum_{a \in \cA} \pi_{sa}\min_{\bm{P} \in \cP}\bm{P}_{sa}\tr\left(\bm{r}_{sa}+\gamma \bm{v}\right). 
    \end{equation}
    \item For s-rectangular models, one can solve the policy evaluation problem \eqref{eq:policy evaluation} by leveraging \eqref{eq:adv dpp intro srec}, i.e., by computing the fixed point of the contracting operator $T^{\pi}:\R^{\X} \rightarrow \R^{\X}$, defined for $\bm{v} \in \R^{\X}$ and $s \in \X$ as
    \begin{equation}\label{eq:operator T pi}
        T^{\pi}(\bm{v})_{s} = \min_{\bm{P} \in \cP} \sum_{a \in \cA} \pi_{sa}\bm{P}_{sa}\tr\left(\bm{r}_{sa}+\gamma \bm{v}\right). 
    \end{equation}
\end{itemize}
We note that from the definition of the operators $T^{\pi}_{\bm{P}},\twhat{T}^{\pi}$ and $T^{\pi}$, we always have
\begin{equation}\label{eq:ordering u's}
    \twhat{\bm{u}}^{\pi} \leq \bm{u}^{\pi} \leq \bm{v}^{\pi,\bm{P}}, \forall \; (\pi,\bm{P}) \in \PiS \times \cP.
\end{equation}
Equation \eqref{eq:ordering u's} follows from classical arguments based on the properties of the operators $T^{\pi},\twhat{T}^{\pi}$ and $T^{\pi}_{\bm{P}}$, see Appendix \ref{app:proof value functions} for completeness. Since $\twhat{T}^{\pi}$ and $T^{\pi}$ are always contractions, irrespective of the rectangularity properties of $\cP$, their fixed point can be computed efficiently via value iteration. 
We note that Equality \eqref{eq:adv dpp intro sarec} and Equality \eqref{eq:adv dpp intro srec} are related to the recently introduced {\em dynamic programming principle} framework~\cite{wang2023foundation}, which relates the optimization problem~\eqref{eq:robust mdp} in the case of s-rectangular and sa-rectangular uncertainty sets, and dynamic programming formulations for stationary or history-dependent decision-maker and adversary. Our analysis in this paper will focus specifically on the relation of the policy evaluation problem~\eqref{eq:policy evaluation} and dynamic programming formulations, for general uncertainty sets.

Even though r-rectangular uncertainty sets may not be sa-rectangular, it is important to note that since \eqref{eq:adv dpp intro sarec} holds, we can solve policy evaluation problems over an r-rectangular uncertainty set $\cP$ by solving policy evaluation problems over its {\em sa-rectangular extension} $\twhat{\cP}$, i.e. with $\twhat{\cP}$ defined as
\begin{equation*}
    \twhat{\cP} = \times_{(s,a) \in \X \times \A} \cP_{sa}
\end{equation*}
for $\cP_{sa} \subset \Delta(\X)$ as defined in \eqref{eq:marginal sa}. We can define the analogous notion of {\em s-rectangular extension} of an uncertainty set, as 
\begin{equation*}
    \twtilde{\cP} = \times_{s \in \X} \cP_{s}
\end{equation*}
with $\cP_{s}$ as defined in \eqref{eq:marginal s}. Note that we always have $\cP \subseteq \twtilde{\cP} \subseteq \twhat{\cP}$
with $\cP = \twtilde{\cP}$ if and only if $\cP$ is s-rectangular, and $\cP = \twhat{\cP}$ if and only if $\cP$ is sa-rectangular. Equation \eqref{eq:adv dpp intro srec} can be interpreted as solving the policy evaluation problem over $\cP$ by reducing it to a policy evaluation problem over its s-rectangular extension $\twtilde{\cP}$, which itself can be solved by dynamic programming~\citep{wiesemann2013robust}.
\paragraph{Tractable uncertainty sets.}
We focus on the class of uncertainty sets for which we can efficiently solve policy evaluation. We will distinguish two notions of tractability, depending on whether the policy evaluation problem \eqref{eq:policy evaluation} can be solved via its s-rectangular extension (as in \eqref{eq:adv dpp intro srec}) or via its sa-rectangular extension (as in \eqref{eq:adv dpp intro sarec}). Since the tractability properties of policy evaluation will follow entirely from the properties of the uncertainty sets, with a slight abuse of language, we will say that the uncertainty sets themselves are {\em tractable}, even though they are not optimization problems. In particular, we have the following definition.
\begin{definition}[Tractable Uncertainty Sets]\label{def:tractability}
Let $\cP$ be an uncertainty set.
\begin{enumerate}
    \item The set $\cP$ is {\em s-tractable} if Equation~\eqref{eq:adv dpp intro srec} holds for all policies $\pi \in \Pi$ and for all choices of the rewards $\bm{r} \in \R^{\X \times \A \times \X}$, the initial distribution $\bm{\mu} \in \Delta(\X)$ and the discount factor $\gamma \in [0,1)$. 
    \item The set $\cP$ is {\em sa-tractable} if Equation~\eqref{eq:adv dpp intro sarec} holds for all policies $\pi \in \Pi$ and for all choices of the rewards $\bm{r} \in \R^{\X \times \A \times \X}$, the initial distribution $\bm{\mu} \in \Delta(\X)$ and the discount factor $\gamma \in [0,1)$. 
\end{enumerate}
\end{definition}
In the rest of the paper, we will use {\em tractable} if we refer to s-tractability or sa-tractability indifferently.
Note that in Definition \ref{def:tractability}, Equation~\eqref{eq:adv dpp intro sarec} or Equation~\eqref{eq:adv dpp intro srec} must hold under all choices of the MDP parameters $(\bm{r},\bm{\mu},\gamma)$. Indeed, if one is willing to consider only specific values of the MDP parameters, several corner cases allow for very simple solutions for the policy evaluation problem, e.g. when the rewards are all zero ($\bm{r}=\bm{0}$), or when $\gamma=0$ and the rewards do not depend on the next states. For a model of uncertainty to be useful in all applications, it is crucial that we can {\em always} solve the policy evaluation problem, not only in some special cases.
Equipped with the concept of tractable uncertainty sets, we can reformulate the main research question of this paper, outlined in the introduction:
\begin{resq}\label{resq: dpp}
    What is the class of uncertainty sets that are sa-tractable?  What is the class of uncertainty sets that are s-tractable?
\end{resq}
In the next section, we will provide a complete answer to this research question in the most general case (without any assumption of the rewards).  We then treat the case of {\em weak tractability} in Section \ref{sec:weak ssp}, i.e., the case where the rewards are independent of the next state. By doing so, we also provide a unifying approach to all existing models of uncertainty in the literature.

We conclude this section with the following proposition showing the relation that sa-tractability implies s-tractability, mirroring the fact that sa-rectangularity implies s-rectangularity.
\begin{proposition}\label{prop:sa tractable are s tractable}
    Let $\cP$ be a compact sa-tractable uncertainty set. Then $\cP$ is s-tractable and $\bm{u}^{\pi} = \twhat{\bm{u}}^{\pi}$ for any $\pi \in \PiS$.
\end{proposition}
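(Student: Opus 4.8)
The plan is to exploit the monotonicity chain \eqref{eq:ordering u's} together with the non-negativity of $\bm{\mu}$ and the two quantifiers (over $\bm{\mu}$ and over the MDP parameters) built into Definition \ref{def:tractability}. First, fix a stationary policy $\pi \in \PiS$, rewards $\bm{r} \in \R^{\X \times \A \times \X}$, a discount factor $\gamma \in [0,1)$, and an initial distribution $\bm{\mu} \in \Delta(\X)$. From \eqref{eq:ordering u's} we have $\twhat{\bm{u}}^{\pi} \leq \bm{u}^{\pi} \leq \bm{v}^{\pi,\bm{P}}$ for every $\bm{P} \in \cP$; since $\bm{\mu} \geq \bm{0}$, pairing with $\bm{\mu}$ preserves these inequalities, and minimizing the last term over the compact set $\cP$ (over which $\bm{P} \mapsto \bm{v}^{\pi,\bm{P}}$ is continuous, so the minimum is attained) yields
\[
\bm{\mu}\tr\twhat{\bm{u}}^{\pi} \ \leq\ \bm{\mu}\tr\bm{u}^{\pi} \ \leq\ \min_{\bm{P} \in \cP} \bm{\mu}\tr\bm{v}^{\pi,\bm{P}}.
\]

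Next I would invoke the sa-tractability hypothesis: Equation~\eqref{eq:adv dpp intro sarec} gives $\min_{\bm{P} \in \cP} \bm{\mu}\tr\bm{v}^{\pi,\bm{P}} = \bm{\mu}\tr\twhat{\bm{u}}^{\pi}$. Substituting this into the display above squeezes all three quantities together, so in particular $\min_{\bm{P} \in \cP} \bm{\mu}\tr\bm{v}^{\pi,\bm{P}} = \bm{\mu}\tr\bm{u}^{\pi}$, which is exactly Equation~\eqref{eq:adv dpp intro srec}. Since $\pi$, $\bm{r}$, $\gamma$ and $\bm{\mu}$ were arbitrary, this shows $\cP$ is s-tractable.

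Finally, to upgrade the scalar identity $\bm{\mu}\tr\twhat{\bm{u}}^{\pi} = \bm{\mu}\tr\bm{u}^{\pi}$ to the vector identity $\twhat{\bm{u}}^{\pi} = \bm{u}^{\pi}$, I would use that, for fixed $\pi$, $\bm{r}$, $\gamma$, the vectors $\twhat{\bm{u}}^{\pi}$ and $\bm{u}^{\pi}$ are the fixed points of $\twhat{T}^{\pi}$ and $T^{\pi}$ and therefore do not depend on $\bm{\mu}$, whereas the scalar identity just derived holds for every $\bm{\mu} \in \Delta(\X)$. Evaluating it at $\bm{\mu} = \bm{e}_{s}$ for each $s \in \X$ gives $\twhat{u}^{\pi}_{s} = u^{\pi}_{s}$, hence $\twhat{\bm{u}}^{\pi} = \bm{u}^{\pi}$.

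There is no serious obstacle: the whole argument is a one-line sandwiching via \eqref{eq:ordering u's}. The only point demanding a little care is the last step — one must explicitly use the ``for all $\bm{\mu} \in \Delta(\X)$'' clause of Definition \ref{def:tractability} (and the $\bm{\mu}$-independence of $\twhat{\bm{u}}^{\pi}$ and $\bm{u}^{\pi}$) to pass from an equality of $\bm{\mu}$-weighted sums to an equality of the underlying vectors, which is what testing against the simplex vertices $\bm{e}_{s}$ accomplishes.
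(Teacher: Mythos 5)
Your proof is correct and follows essentially the same sandwiching argument as the paper: both use the ordering $\twhat{\bm{u}}^{\pi} \leq \bm{u}^{\pi} \leq \bm{v}^{\pi,\bm{P}}$ from \eqref{eq:ordering u's} together with the sa-tractability identity to squeeze all three quantities together, and then deduce the vector equality $\twhat{\bm{u}}^{\pi} = \bm{u}^{\pi}$ from the scalar equality holding for every $\bm{\mu} \in \Delta(\X)$. Your extra remark about testing against the vertices $\bm{e}_{s}$ just makes explicit a step the paper leaves implicit.
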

\begin{proof}{Proof of Proposition \ref{prop:sa tractable are s tractable}.}
Let $\cP$ be compact and sa-tractable. Let $\pi \in \PiS$ and $\bm{r} \in \R^{\X \times \A \times \X}, \gamma \in [0,1),\bm{\mu}\in \Delta(\X)$.
We have
    \[ \min_{\bm{P} \in \cP} \bm{\mu}\tr\bm{v}^{\pi,\bm{P}} = \bm{\mu}\tr\twhat{\bm{u}}^{\pi} \leq \bm{\mu}\tr\bm{u}^{\pi} \leq \min_{\bm{P} \in \cP} \bm{\mu}\tr\bm{v}^{\pi,\bm{P}}
    \]
    where the equality comes from sa-tractability, and the inequalities come from \eqref{eq:ordering u's}. Therefore, all terms above are equal and $\cP$ is s-tractable. Since $\bm{\mu}\tr\twhat{\bm{u}}^{\pi} = \bm{\mu}\tr\bm{u}^{\pi}$ for all $\bm{\mu} \in \Delta(\X)$, we have $\bm{u}^{\pi} = \twhat{\bm{u}}^{\pi}$.
    \hfill \Halmos
\end{proof}

\section{Tractable uncertainty sets}\label{sec:ssp}
In this section, we show that, in general, the only tractable models of uncertainty are the s-rectangular and the sa-rectangularity models of uncertainty. This is surprising since we will show in Section \ref{sec:weak ssp}  several non-rectangular models of uncertainty are weakly tractable. We start with our results for s-rectangular uncertainty and then provide the analogous results for sa-rectangular uncertainty.

\paragraph{s-tractable uncertainty sets.} We show that the only model of uncertainty that is s-tractable in all generality is the s-rectangular model of uncertainty.
We start with the following theorem, which provides {\em necessary and sufficient} conditions for \eqref{eq:adv dpp intro srec} to hold.
\begin{theorem}\label{th:adv DPP, feasible upi, adv SSP equivalent - rsas' - srec}
Let $\cP$ be compact (not necessarily convex).
The following statements are {\bf equivalent}. 
    \begin{enumerate}
    \item The set $\cP$ is s-tractable:
    \begin{equation}\label{eq:adv dpp rsas' - srec}
        \forall \; \pi \in \PiS, \forall \; \bm{r} \in \R^{\X \times \A \times \X}, \forall \; \gamma \in [0,1), \forall \; \bm{\mu} \in \Delta(\X), \min_{\bm{P} \in \cP} \bm{\mu}\tr\bm{v}^{\pi,\bm{P}} = \bm{\mu}\tr\bm{u}^{\pi}
    \end{equation}
        \item The vector $\bm{u}^{\pi}$ is feasible:
        \begin{equation}\label{eq:feas upi rsas' - srec}
            \forall \; \pi \in \PiS,\forall \; \bm{r} \in \R^{\X \times \A \times \X}, \forall \; \gamma \in [0,1), \exists \; \twhat{\bm{P}} \in \cP, \bm{u}^{\pi} = \bm{v}^{\pi,\twhat{\bm{P}}}.
        \end{equation}
        \item The following {\em Simultaneous Solvability Property} (SSP) holds:
        \begin{equation}\label{eq:adv ssp rsas' - srec} 
            \forall \; \bm{V} \in \R^{\X \times \A \times \X}, \cap_{s \in \X} \arg \min_{\bm{P} \in \cP} \; \langle \bm{P}_{s},\bm{V}_{s}\rangle  \neq \emptyset.
        \end{equation}
    \end{enumerate}
\end{theorem}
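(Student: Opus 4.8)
The plan is to prove the three equivalences via the cycle $(1)\Rightarrow(2)\Rightarrow(3)\Rightarrow(1)$, using throughout that $\cP$ is compact, that $T^{\pi}$ and $T^{\pi}_{\bm{P}}$ are contractions (so $\bm{u}^{\pi}$ and $\bm{v}^{\pi,\bm{P}}$ are their unique fixed points), that $\bm{P}\mapsto\bm{v}^{\pi,\bm{P}}$ is continuous, and the monotone sandwich $\bm{u}^{\pi}\leq\bm{v}^{\pi,\bm{P}}$ from~\eqref{eq:ordering u's}. Compactness is what guarantees that all the minima and arg-minima below are attained; no convexity is needed. The only genuinely clever ingredient is a reduction at $\gamma=0$, which turns s-tractability into a statement about interchanging a sum and a minimum; everything else is bookkeeping around contractions.

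\emph{Step $(1)\Rightarrow(2)$.} Fix $\pi\in\PiS$, $\bm{r}\in\R^{\X\times\A\times\X}$ and $\gamma\in[0,1)$, and apply~\eqref{eq:adv dpp rsas' - srec} with the uniform initial distribution $\mu_{s}=1/\card(\X)$. Let $\twhat{\bm{P}}\in\cP$ attain $\min_{\bm{P}\in\cP}\bm{\mu}\tr\bm{v}^{\pi,\bm{P}}$. Then $\sum_{s}v^{\pi,\twhat{\bm{P}}}_{s}=\sum_{s}u^{\pi}_{s}$, while $v^{\pi,\twhat{\bm{P}}}_{s}\geq u^{\pi}_{s}$ for each $s$ by~\eqref{eq:ordering u's}; a sum of nonnegative terms being zero forces $v^{\pi,\twhat{\bm{P}}}_{s}=u^{\pi}_{s}$ coordinatewise, i.e. $\bm{u}^{\pi}=\bm{v}^{\pi,\twhat{\bm{P}}}$, which is~\eqref{eq:feas upi rsas' - srec}.

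\emph{Step $(2)\Rightarrow(3)$.} Fix an arbitrary $\bm{V}\in\R^{\X\times\A\times\X}$ and apply~\eqref{eq:feas upi rsas' - srec} to the instance with $\gamma=0$, the uniform policy $\pi_{sa}=1/\card(\A)$, and rewards $\bm{r}_{sa}=\card(\A)\,\bm{V}_{sa}$. At $\gamma=0$ the fixed-point equations give, for every $\bm{P}\in\cP$, $v^{\pi,\bm{P}}_{s}=\sum_{a}\pi_{sa}\bm{P}_{sa}\tr\bm{r}_{sa}=\langle\bm{P}_{s},\bm{V}_{s}\rangle$ and likewise $u^{\pi}_{s}=\min_{\bm{P}\in\cP}\langle\bm{P}_{s},\bm{V}_{s}\rangle$. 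Feasibility yields $\twhat{\bm{P}}\in\cP$ with $\langle\twhat{\bm{P}}_{s},\bm{V}_{s}\rangle=v^{\pi,\twhat{\bm{P}}}_{s}=u^{\pi}_{s}=\min_{\bm{P}\in\cP}\langle\bm{P}_{s},\bm{V}_{s}\rangle$ for every $s\in\X$, i.e. $\twhat{\bm{P}}\in\cap_{s\in\X}\arg\min_{\bm{P}\in\cP}\langle\bm{P}_{s},\bm{V}_{s}\rangle$; this is exactly the SSP~\eqref{eq:adv ssp rsas' - srec}.

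\emph{Step $(3)\Rightarrow(1)$.} Fix $\pi\in\PiS$, $\bm{r}$, $\gamma$, $\bm{\mu}$ and let $\bm{u}^{\pi}$ be the fixed point of $T^{\pi}$. Apply the SSP to $\bm{V}_{sa}:=\pi_{sa}(\bm{r}_{sa}+\gamma\bm{u}^{\pi})$ to obtain $\twhat{\bm{P}}\in\cP$ with $\langle\twhat{\bm{P}}_{s},\bm{V}_{s}\rangle=\min_{\bm{P}\in\cP}\langle\bm{P}_{s},\bm{V}_{s}\rangle$ for all $s$. The right-hand side equals $T^{\pi}(\bm{u}^{\pi})_{s}=u^{\pi}_{s}$ and the left-hand side equals $\sum_{a}\pi_{sa}\twhat{\bm{P}}_{sa}\tr(\bm{r}_{sa}+\gamma\bm{u}^{\pi})=T^{\pi}_{\twhat{\bm{P}}}(\bm{u}^{\pi})_{s}$, so $\bm{u}^{\pi}$ is a fixed point of the contraction $T^{\pi}_{\twhat{\bm{P}}}$, whence $\bm{u}^{\pi}=\bm{v}^{\pi,\twhat{\bm{P}}}$. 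Therefore $\min_{\bm{P}\in\cP}\bm{\mu}\tr\bm{v}^{\pi,\bm{P}}\leq\bm{\mu}\tr\bm{v}^{\pi,\twhat{\bm{P}}}=\bm{\mu}\tr\bm{u}^{\pi}\leq\min_{\bm{P}\in\cP}\bm{\mu}\tr\bm{v}^{\pi,\bm{P}}$, the last inequality using~\eqref{eq:ordering u's} and $\bm{\mu}\geq\bm{0}$; all terms coincide, which is~\eqref{eq:adv dpp rsas' - srec}.

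I expect the main obstacle to be the $\gamma=0$ reduction underlying Steps $(1)\Rightarrow(2)$ and $(2)\Rightarrow(3)$: one has to recognize that evaluating a stationary policy with no discounting collapses s-tractability (at the uniform $\bm{\mu}$) to the interchange identity $\min_{\bm{P}\in\cP}\sum_{s}\langle\bm{P}_{s},\bm{V}_{s}\rangle=\sum_{s}\min_{\bm{P}\in\cP}\langle\bm{P}_{s},\bm{V}_{s}\rangle$, and that a single $\bm{P}$ attaining the outer minimum must then attain every inner minimum — precisely the simultaneous solvability of the per-state adversary problems. Once this dictionary is in place, the remaining content is the contraction identity $T^{\pi}_{\twhat{\bm{P}}}(\bm{u}^{\pi})=\bm{u}^{\pi}\Rightarrow\bm{u}^{\pi}=\bm{v}^{\pi,\twhat{\bm{P}}}$ and the sandwich~\eqref{eq:ordering u's}, both routine.
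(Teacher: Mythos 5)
Your proposal is correct and follows essentially the same route as the paper's proof: the same cyclic structure, the uniform initial distribution plus the sandwich \eqref{eq:ordering u's} for $(1)\Rightarrow(2)$ (stated directly rather than by contradiction), the $\gamma=0$ uniform-policy reduction for $(2)\Rightarrow(3)$ (with only a cosmetic rescaling of the rewards by $\card(\A)$), and the choice $V_{sas'}=\pi_{sa}(r_{sas'}+\gamma u^{\pi}_{s'})$ together with uniqueness of the fixed point of $T^{\pi}_{\twhat{\bm{P}}}$ for $(3)\Rightarrow(1)$. No gaps.
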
           
\begin{proof}{Proof of Theorem \ref{th:adv DPP, feasible upi, adv SSP equivalent - rsas' - srec}.}
    Our proof proceeds in three parts.
    
{\bf Part 1:} \eqref{eq:adv dpp rsas' - srec} $\Rightarrow$ \eqref{eq:feas upi rsas' - srec}. 
We employ a proof by contradiction. Assume that \eqref{eq:feas upi rsas' - srec} does not hold for some $\pi \in \PiS,\bm{r}\in \R^{\X \times \A \times \X},\gamma \in [0,1)$. Then for all transition probabilities $\bm{P} \in \cP$ there exists a state $\bar{s} \in \X$ such that  $u^{\pi}_{\bar{s}} < v^{\pi,\bm{P}}_{\bar{s}}.$ Recall, additionally, that $u^{\pi}_{s} \leq v^{\pi,\bm{P}}_{s}$ holds for all state $s \in \X$ from \eqref{eq:ordering u's}.
For $\bm{\mu} = \frac{1}{|\X|}(1,...,1)$, let $\twhat{\bm{P}} \in \arg \min_{\bm{P} \in \cP} \bm{\mu}\tr\bm{v}^{\pi,\bm{P}}$, and let $\bar{s} \in \X$ such that $u^{\pi}_{\bar{s}} < v^{\pi,\twhat{\bm{P}}}_{\bar{s}}$. We have 
\[ \min_{\bm{P} \in \cP} \bm{\mu}\tr\bm{v}^{\pi,\bm{P}} = \min_{\bm{P} \in \cP} \frac{1}{|\X|} \sum_{s \in \X} v^{\pi,\bm{P}}_{s} = \sum_{s \in \X}  \frac{1}{|\X|} 
 v^{\pi,\twhat{\bm{P}}}_{s} > \sum_{s \in \X}  \frac{1}{|\X|} 
 u^{\pi}_{s} = \bm{\mu}\tr\bm{u}^{\pi},\]
 where the strict inequality comes from $u^{\pi}_{\bar{s}} < v^{\pi,\twhat{\bm{P}}}_{\bar{s}}$ and $u^{\pi}_{s} \leq v^{\pi,\bm{P}}_{s}$ for all state $s \in \X$. Therefore, \eqref{eq:adv dpp rsas' - srec} does not hold. Therefore, \eqref{eq:adv dpp rsas' - srec} $\Rightarrow$ \eqref{eq:feas upi rsas' - srec}.

{\bf Part 2:} \eqref{eq:feas upi rsas' - srec}  $\Rightarrow$ \eqref{eq:adv ssp rsas' - srec}.

Let $\left(\bm{V}_{s}\right)_{s \in \X} \in  \times_{s \in \X} \R^{\A \times \X}$. 
Let $\pi \in \PiS$ be the uniform policy, $\gamma=0$ and $\bm{r} \in \R^{\X \times \A \times \X}$ such that $r_{sas'} = V_{sas'}, \forall \; (s,a,s')$.
Then from \eqref{eq:feas upi rsas' - srec} we know that there exists $\twhat{\bm{P}} \in \cP$ such that $\bm{u}^{\pi} = \bm{v}^{\pi,\twhat{\bm{P}}}.$
But with this definition of the MDP instance, we have, for all $s \in \X$,
\begin{align*}
    v^{\pi,\twhat{\bm{P}}}_{s} & = \sum_{a \in \A} \pi_{sa}\twhat{\bm{P}}_{sa}\tr\bm{r}_{sa} = \frac{1}{|\A|} \langle \twhat{\bm{P}}_{s},\bm{r}_{s}\rangle,\text{ and } \\
    v^{\pi,\twhat{\bm{P}}}_{s}& = u^{\pi}_{s}  = \frac{1}{|\A|}\min_{\bm{P} \in \cP} \langle \bm{P}_{s},\bm{r}_{s}\rangle
\end{align*}
Since $\bm{r}_{s} = \bm{V}_{s}$ for all $s \in \X$, this shows that $\twhat{\bm{P}} \in \cap_{s \in \X} \arg \min_{\bm{P} \in \cP} \langle \bm{P}_{s},\bm{V}_{s}\rangle$ so that $\cap_{s \in \X} \arg \min_{\bm{P} \in \cP} \langle \bm{P}_{s},\bm{V}_{s}\rangle  \neq \emptyset.$

{\bf Part 3:} \eqref{eq:adv ssp rsas' - srec}  $\Rightarrow$ \eqref{eq:adv dpp rsas' - srec}. 
Assume that \eqref{eq:feas upi rsas' - srec} holds. Let $\bm{r} \in \R^{\X \times \A \times \X},\gamma \in [0,1), \pi \in \PiS$. Recall that we always have
$\bm{u}^{\pi} \leq \bm{v}^{\pi,\bm{P}}, \forall \; \bm{P} \in \cP$
so that we always have, for any $\bm{\mu} \in \Delta(\X)$,
\[ \bm{\mu}\tr\bm{u}^{\pi} \leq \min_{\bm{P} \in \cP} \bm{\mu}\tr\bm{v}^{\pi,\bm{P}}.\]
We now prove the reverse inequality.
Recall that by definition, $\bm{u}^{\pi}$ is the unique vector such that
\[ u^{\pi}_{s} = \min_{\bm{P} \in \cP} \sum_{a \in \A} \pi_{sa}\bm{P}_{sa}\tr\left(\bm{r}_{sa} + \gamma \bm{u}^{\pi}\right), \forall \; s \in \X.\]
Let $\bm{V} \in \R^{\X \times \A \times \X}$ be defined as $V_{sas'} = \pi_{sa}(r_{sas'}+\gamma u^{\pi}_{s'})$ for $(s,a,s') \in \X \times \A \times \X$.
From \eqref{eq:feas upi rsas' - srec} we know that there exists $\twhat{\bm{P}} \in \cP$ such that
\[  \twhat{\bm{P}} \in  \cap_{s \in \X} \arg \min_{\bm{P} \in \cP} \langle \bm{P}_{s},\bm{V}_{s}\rangle.
\]
This can be rewritten 
\[ \bm{u}^{\pi} = \sum_{a \in \A} \pi_{sa}\twhat{\bm{P}}_{sa}\tr\left(\bm{r}_{sa} + \gamma \bm{u}^{\pi}\right) = T^{\pi}_{\twhat{\bm{P}}}(\bm{u}^{\pi})_{s}, \forall \; s \in \X. \]
From the uniqueness of the fixed point of the contraction $T^{\pi}_{\bm{P}}$, we know that $\bm{u}^{\pi} = \bm{v}^{\pi,\twhat{\bm{P}}}$.
From this we obtain that for any $\bm{\mu} \in \Delta(\X)$ we have
\[ \bm{\mu}\tr\bm{u}^{\pi} = \bm{\mu}\tr\bm{v}^{\pi,\twhat{\bm{P}}} \geq \min_{\bm{P} \in \cP} \bm{\mu}\tr\bm{v}^{\pi,\bm{P}}\]
and therefore we conclude that \eqref{eq:adv dpp rsas' - srec} holds.
\hfill \Halmos
\end{proof}
Theorem \ref{th:adv DPP, feasible upi, adv SSP equivalent - rsas' - srec} provides {\em necessary and sufficient} conditions for Equation~\eqref{eq:adv dpp intro srec} to hold in all generality. As highlighted by \eqref{eq:feas upi rsas' - srec}, \eqref{eq:adv dpp intro srec} holds if and only if the vector $\bm{u}^{\pi}$, defined as the unique fixed point of the contraction $T^{\pi}$ as in \eqref{eq:operator T pi}, is {\em feasible}, i.e. only if we can find $\twhat{\bm{P}} \in \cP$ such that $\bm{u}^{\pi} = \bm{v}^{\pi,\twhat{\bm{P}}}$. Since the operator $T^{\pi}$ involves a minimization over $\cP$ {\em independently} across each component $s \in \X$, \eqref{eq:feas upi rsas' - srec} is, in turn, equivalent to a {\em simultaneous solvability property} (SSP), i.e. to the property that a feasible point in $\cP$ can be recovered as an optimal solution of multiple optimization problems, each of them involving the marginals of $\cP$ across different $s \in \X$, as formulated in \eqref{eq:adv ssp rsas' - srec} where each of the optimization problems is over the entire uncertainty set (with variables $\bm{P} \in \cP$) but where the objective function only depends on the transition probabilities $\bm{P}_{s}$ out of a single state $s$. 

We emphasize that contrary to most prior work, we do not only study sufficient conditions for \eqref{eq:adv dpp intro srec} to hold, we also discover necessary conditions. The SSP as in \eqref{eq:adv ssp rsas' - srec} also reduces a property that must hold for any policy and MDP parameters, to a property that solely depends on the uncertainty set. We next provide a complete characterization of the sets satisfying \eqref{eq:adv dpp rsas' - srec}.
\begin{remark}[Compactness assumption]
We note that Theorem \ref{th:adv DPP, feasible upi, adv SSP equivalent - rsas' - srec} requires the assumption that the set $\cP$ is compact. This assumption is necessary to ensure that the infimum over the set $\cP$ is always attained. However, it is possible to generalize Theorem \ref{th:adv DPP, feasible upi, adv SSP equivalent - rsas' - srec} without this assumption, at the price of (a) replacing the minimization programs by infima, (b) replacing the feasibility of $\bm{u}^{\pi}$ as in \eqref{eq:feas upi rsas' - srec} by an {\em approximate} feasibility: $\forall \; \epsilon >0, \exists \twhat{\bm{P}} \in \cP$ such that $\bm{v}^{\pi,\twhat{\bm{P}}}-\bm{u}^{\pi} \leq \epsilon$, and (c) replacing the SSP property by:
\[ \forall \; \epsilon>0, \forall \; \bm{V} \in \R^{\X \times \A \times \X}, \exists \; \bm{Q} \in \cP, \langle \bm{Q}_{s},\bm{V}_{s} \rangle - \inf_{\bm{P} \in \cP} \langle \bm{P}_{s},\bm{V}_{s} \rangle \leq \epsilon.\]
For the sake of conciseness and because the compactness assumption is naturally satisfied by most distance-based models~\cite{ramani2022robust}, we do not provide the statement of these results in this paper.
\end{remark}
\paragraph{Uncertainty sets satisfying the SSP property~\eqref{eq:adv ssp rsas' - srec}.}
It is straightforward to note that compact s-rectangular uncertainty sets always satisfy \eqref{eq:adv ssp rsas' - srec}. Indeed, given a compact s-rectangular uncertainty set $\cP$, if we choose $\twtilde{\bm{P}} = \left(\twtilde{\bm{P}}_{s}\right)_{s \in \X}$ for $\twtilde{\bm{P}}_{s} \in \Delta(\X)^{\A}$ defined as, for each $s \in \X$,
\[\twtilde{\bm{P}}_{s} \in \arg \min_{\bm{P} \in \cP} \; \langle \bm{P}_{s},\bm{V}_{s}\rangle\]
then $\twtilde{\bm{P}} \in \cP$ because $\twtilde{\bm{P}}_{s} \in \cP_{s}$ for each $s \in \X$ and $\cP = \times_{s \in \X} \cP_{s}$. 

However, if we allow the uncertainty set to be nonconvex, some non-rectangular uncertainty sets may satisfy \eqref{eq:adv ssp rsas' - srec}. We provide a simple example below.
\begin{example}[Non s-rectangular sets may satisfy \eqref{eq:adv ssp rsas' - srec}]\label{ex:non s-rec satisfy ssp rsas'}
    Assume that $\X = \{1,2\}$ and that there is only one action to choose ($\A = \{1\}$). Then any uncertainty set $\cP$ is a subset of $\{ (p,1-p),(q,1-q) \; | \; p,q \in [0,1] \}$ where $(p,1-p)$ is the transition probabilities out of State $1$ and $(q,1-q)$ is the transition probabilities out of State $2$. For the sake of conciseness, we write $\bm{P} = (p,q)$ for $\bm{P} \in \cP$. For instance $\bm{P} = (0,0)$ corresponds to deterministic transitions $1 \rightarrow 2$ and $2 \rightarrow 1$.
Consider the following finite uncertainty set: 
\[ \cP = \{(0,0),(0,1),(1,0),(1,1),(1/2,1/2)\}.\]
Clearly, $\cP$ is not convex since it is finite. It is also not s-rectangular, because $(1/2,1/2) \in \cP$ and $(0,0) \in \cP$ but $(0,1/2) \notin \cP$. In fact, the non-rectangularity of $\cP$ comes from the point $(1/2,1/2)$, in the sense that $\cP \setminus \{(1/2,1/2)\}$ is s-rectangular. However \eqref{eq:adv ssp rsas' - srec} always holds for $\cP$, since the point $(1/2,1/2)$ is irrelevant from an optimization standpoint - it is never the {\em unique} minimizer of a linear form over $\cP$, and $\cP \setminus \{(1/2,1/2)\}$ is s-rectangular.
\end{example}

As highlighted in Example \ref{ex:non s-rec satisfy ssp rsas'}, \eqref{eq:adv ssp rsas' - srec} is fundamentally a statement about the {\em extreme points} of $\cP$, since it only involves minimization of linear objectives over $\cP$. When $\cP$ is convex and compact, it is fully described by its set of extreme points $\ext(\cP)$, since in this case $\cP = \conv(\ext(\cP))$. Therefore, we can completely characterize the sets satisfying \eqref{eq:adv ssp rsas' - srec} in the case of compact {\em convex} uncertainty sets. Our main theorem in this section shows that, in this case, the only s-tractable uncertainty sets are the s-rectangular uncertainty sets.
\begin{theorem}\label{th:reformulation adv ssp rsas' - srec}
   Assume that $\cP$ is convex and compact. Then
   \begin{center}
       $\eqref{eq:adv ssp rsas' - srec} \iff \cP = \times_{s \in \X} \cP_{s}$, \; for $\cP_{s}$ the marginals of $\cP$ for state $s \in \X$, defined in \eqref{eq:marginal s}.
   \end{center}
\end{theorem}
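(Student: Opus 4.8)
The plan is to prove the equivalence by treating the two implications separately. The reverse implication has essentially been recorded already in the discussion preceding the theorem, so the only real work is the forward direction $\eqref{eq:adv ssp rsas' - srec} \Rightarrow \cP = \times_{s\in\X}\cP_{s}$.

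\textbf{Reverse implication.} If $\cP = \times_{s\in\X}\cP_{s}$, then for any $\bm{V} = (\bm{V}_{s})_{s\in\X}$ one picks, for each $s$, a minimizer $\twtilde{\bm{P}}_{s} \in \arg\min_{\bm{P}\in\cP}\langle \bm{P}_{s},\bm{V}_{s}\rangle$; since this choice constrains only the $s$-block and $\cP$ is a Cartesian product, the concatenation lies in $\cP$ and hence in $\cap_{s\in\X}\arg\min_{\bm{P}\in\cP}\langle\bm{P}_{s},\bm{V}_{s}\rangle$. This is exactly the argument given before the theorem, so I would simply cite it.

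\textbf{Forward implication.} Write $\twtilde{\cP} := \times_{s\in\X}\cP_{s}$ for the s-rectangular extension, and recall the always-valid inclusion $\cP \subseteq \twtilde{\cP}$. Note that $\twtilde{\cP}$ is convex and compact: each marginal $\cP_{s}$ is the image of the convex compact set $\cP$ under the linear (hence continuous) projection onto the $s$-block, and a product of convex compact sets is convex compact. The crux is that \eqref{eq:adv ssp rsas' - srec} forces $\cP$ and $\twtilde{\cP}$ to have the same linear-minimization value in every direction. Fix $\bm{V} = (\bm{V}_{s})_{s\in\X} \in \R^{\X\times\A\times\X}$ and use \eqref{eq:adv ssp rsas' - srec} to pick $\bm{Q} \in \cap_{s\in\X}\arg\min_{\bm{P}\in\cP}\langle\bm{P}_{s},\bm{V}_{s}\rangle$. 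By the definition of the marginals, $\min_{\bm{P}\in\cP}\langle\bm{P}_{s},\bm{V}_{s}\rangle = \min_{\bm{p}\in\cP_{s}}\langle\bm{p},\bm{V}_{s}\rangle$ for each $s$, so summing over $s$ and using that minimization over a Cartesian product decouples across blocks,
\[ \langle\bm{Q},\bm{V}\rangle = \sum_{s\in\X}\langle\bm{Q}_{s},\bm{V}_{s}\rangle = \sum_{s\in\X}\min_{\bm{p}\in\cP_{s}}\langle\bm{p},\bm{V}_{s}\rangle = \min_{\bm{P}\in\twtilde{\cP}}\langle\bm{P},\bm{V}\rangle. \]
Since $\bm{Q}\in\cP$, this gives $\min_{\bm{P}\in\cP}\langle\bm{P},\bm{V}\rangle \le \min_{\bm{P}\in\twtilde{\cP}}\langle\bm{P},\bm{V}\rangle$, while $\cP\subseteq\twtilde{\cP}$ gives the reverse inequality; hence $\min_{\bm{P}\in\cP}\langle\bm{P},\bm{V}\rangle = \min_{\bm{P}\in\twtilde{\cP}}\langle\bm{P},\bm{V}\rangle$ for every $\bm{V}\in\R^{\X\times\A\times\X}$.

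\textbf{Conclusion and main obstacle.} I would finish by invoking the standard fact that a nonempty compact convex subset of a Euclidean space is determined by its support function: writing $-\min_{\bm{P}\in\cC}\langle\bm{P},\bm{V}\rangle = \max_{\bm{P}\in\cC}\langle\bm{P},-\bm{V}\rangle$, the displayed identity says $\cP$ and $\twtilde{\cP}$ have equal support functions, so $\cP = \twtilde{\cP} = \times_{s\in\X}\cP_{s}$. (An alternative is to show $\ext(\twtilde{\cP}) = \times_{s}\ext(\cP_{s}) \subseteq \cP$ and conclude by taking convex hulls, but handling extreme points that fail to be exposed makes this route messier than the support-function one.) I do not anticipate a genuine obstacle: the single conceptual step is recognizing that \eqref{eq:adv ssp rsas' - srec}, which a priori only constrains each direction $\bm{V}$ in isolation, is — once quantified over all $\bm{V}$ — precisely the statement that $\cP$ and its s-rectangular extension have the same support function, after which convexity and compactness close the argument.
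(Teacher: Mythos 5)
Your proof is correct, but it takes a genuinely different route from the paper's. You convert the SSP~\eqref{eq:adv ssp rsas' - srec} into the statement that $\cP$ and its s-rectangular extension $\twtilde{\cP}=\times_{s\in\X}\cP_{s}$ have the same support function (via the identity $\sum_{s}\min_{\bm{p}\in\cP_{s}}\langle\bm{p},\bm{V}_{s}\rangle=\min_{\bm{P}\in\twtilde{\cP}}\langle\bm{P},\bm{V}\rangle$, with the SSP supplying a single $\bm{Q}\in\cP$ attaining this value), and then invoke the fact that nonempty compact convex sets with equal support functions coincide. The paper instead takes exactly the ``alternative'' route you flag as messier: it shows $\times_{s\in\X}\ext(\cP_{s})\subseteq\cP$ and concludes by taking convex hulls, and it handles the non-exposed extreme points precisely by invoking Straszewicz's theorem (exposed points are dense in the extreme points), approximating each extreme point of $\cP_{s}$ by exposed points, using the uniqueness of the minimizer in the SSP to place each approximating product point in $\cP$, and passing to the limit via compactness. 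Your support-function argument is shorter and arguably cleaner: it needs no density/limiting argument and makes transparent that the SSP, quantified over all directions $\bm{V}$, is exactly support-function equality of $\cP$ with $\twtilde{\cP}$. What the paper's extreme-point route buys is a more constructive picture (it exhibits the extreme points of the product as actual members of $\cP$), which dovetails with the discussion around Example~\ref{ex:non s-rec satisfy ssp rsas'} that the SSP is fundamentally a statement about extreme points; but as a proof of the theorem as stated, both are complete, and yours is the more economical.
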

We have already shown that we always have $\cP = \times_{s \in \X} \cP_{s} \Rightarrow \eqref{eq:adv ssp rsas' - srec}$ (even when $\cP$ is nonconvex).
Thus, the main difficulty in proving Theorem \ref{th:reformulation adv ssp rsas' - srec} is proving the other implication under the convexity and the compactness of the uncertainty set. 
\begin{proof}{Proof of Theorem \ref{th:reformulation adv ssp rsas' - srec}.}
    We start by proving that $\times_{s \in \X} \ext(\cP_{s}) \subset \cP.$

Let $\left(\twhat{\bm{P}}_{s}\right)_{s \in \X} \in \times_{s \in \X} \ext(\cP_{s})$. We want to show that $\twhat{\bm{P}} \in \cP$.
From Straszewicz's theorem~\citep{straszewicz1935exponierte} (see Theorem 18.6 in \cite{rockafellar2015convex}), we know that for each $s \in \X$, there exists a sequence of {\em exposed points} $\left(\twhat{\bm{P}}_{s}^{n}\right)_{n \in \N}$ and a sequence of vectors $\left(\bm{V}_{s}^{n}\right)_{n \in \N}$ with $\twhat{\bm{P}}_{s}^{n} \in \ext(\cP_{s})$, $\twhat{\bm{P}}^{n}_{s} \rightarrow \bm{P}_{s}$ for $n \rightarrow + \infty$, and 
\[ \{ \twhat{\bm{P}}_{s}^{n} \} = \arg \min_{\bm{P}_{s} \in \cP_{s}} \langle \bm{P}_{s},\bm{V}_{s}^{n}\rangle.\]
Let us consider $ n \in \N$. We want to show that $\twhat{\bm{P}}^{n} = \left(\twhat{\bm{P}}_{s}^{n}\right)_{s \in \X}$ is feasible in $\cP$. From \eqref{eq:adv ssp rsas' - srec}, we know that there exists $\twtilde{\bm{P}}^{n} \in \cP$ such that
\[ \twtilde{\bm{P}}^{n} \in \cap_{s \in \X} \arg \min_{\bm{P}_{s} \in \cP_{s}} \langle \bm{P}_{s},\bm{V}_{s}^{n}\rangle.\]
From the uniqueness of the minimizers in each of the optimization programs $\arg \min_{\bm{P}_{s} \in \cP_{s}} \langle \bm{P}_{s},\bm{V}_{s}^{n}\rangle$ for each $s \in \X$, we obtain that $\twtilde{\bm{P}}_{s}^{n} = \twhat{\bm{P}}_{s}^{n}$ for each $s \in \X$, from which we conclude that $\twhat{\bm{P}}^{n} = \twtilde{\bm{P}}^{n}$ and therefore that $\twhat{\bm{P}}^{n} \in \cP$.
Therefore, there exists a sequence $\left(\twhat{\bm{P}}^{n}\right)_{n \in \N}$ such that $\twhat{\bm{P}}^{n} \in \cP$ for all $n \in \N$ and $\lim_{n \rightarrow + \infty} \twhat{\bm{P}}^{n} = \twhat{\bm{P}}$. Since $\cP$ is compact, we conclude that $\twhat{\bm{P}} \in \cP$, which show that $\times_{s \in \X} \ext(\cP_{s}) \subset \cP$.

It is now straightforward to conclude since we have
\[ \times_{s \in \X} \ext(\cP_{s}) \subset \cP  \Rightarrow \conv(\times_{s \in \X} \ext(\cP_{s})) \subset \conv(\cP)   \Rightarrow \times_{s \in \X} \conv(\ext(\cP_{s})) \subset  \cP,\]
where the first implication follows from taking the convex hull on both sides of the first inclusion, and the second implication follows from the convexity of $\cP$ and from $\conv(A \times B) \subset \conv(A) \times \conv(B)$ for two sets $A$ and $B$.

Since $\cP$ is convex and compact, $\cP_{s}$ is convex and compact for each $s \in \X$, i.e., $\conv(\ext(\cP_{s})) = \cP_{s}$. We conclude that $\times_{s \in \X} \cP_{s} \subset  \cP$. Since it is always true that $\cP \subset \times_{s \in \X} \cP_{s}$, we conclude that $\cP = \times_{s \in \X} \cP_{s}$.
\hfill \Halmos
\end{proof}
At this point, the reader may be surprised by the conclusion of Theorem \ref{th:reformulation adv ssp rsas' - srec} that only s-rectangular models admit a dynamic programming formulation as \eqref{eq:adv dpp rsas' - srec}. Indeed, it is common to read in the RMDP literature that {\em ``RMDPs with r-rectangular models are tractable"} and that they can be solved via fixed-point computation. However, the r-rectangular model is only {\em weakly} tractable, i.e. the assumption that the instantaneous rewards do not depend on the next state is required. This assumption is indeed stated in the papers studying r-rectangular models~\citep{goh2018data,goyal2022robust}. We will provide a more thorough discussion on this in Section \ref{sec:weak ssp}.
\paragraph{sa-tractable uncertainty sets.}
We can prove the exact converse as Theorem \ref{th:adv DPP, feasible upi, adv SSP equivalent - rsas' - srec} for the case of sa-rectangular extensions. For the sake of conciseness, we provide the proof in Appendix \ref{app:proof rsas' sarec}.
\begin{theorem}\label{th:adv DPP, feasible upi, adv SSP equivalent - rsas' - sarec}
Let $\cP$ be compact (not necessarily convex).
The following statements are {\bf equivalent}. 
    \begin{enumerate}
    \item The set $\cP$ is sa-tractable:
    \begin{equation}\label{eq:adv dpp rsas' - sarec}
        \forall \; \pi \in \PiS, \forall \;\bm{r} \in \R^{\X \times \A \times \X}, \forall \;\gamma \in [0,1), \forall \; \bm{\mu} \in \Delta(\X), \min_{\bm{P} \in \cP} \bm{\mu}\tr\bm{v}^{\pi,\bm{P}} = \bm{\mu}\tr\twhat{\bm{u}}^{\pi}
    \end{equation}
        \item The vector $\twhat{\bm{u}}^{\pi}$ is feasible:
        \begin{equation}\label{eq:feas upi rsas' - sarec}
            \forall \; \pi \in \PiS, \forall \;\bm{r} \in \R^{\X \times \A \times \X}, \forall \;\gamma \in [0,1), \exists \; \twhat{\bm{P}} \in \cP, \twhat{\bm{u}}^{\pi} = \bm{v}^{\pi,\twhat{\bm{P}}}.
        \end{equation}
        \item The following SSP holds:
        \begin{equation}\label{eq:adv ssp rsas' - sarec} 
            \forall \; \bm{V}  \in \R^{\X \times \A \times \X},  \cap_{(s,a) \in \X \times \A} \arg \min_{\bm{P} \in \cP} \; \langle \bm{P}_{sa},\bm{V}_{sa}\rangle   \neq \emptyset.
        \end{equation}
    \end{enumerate}
\end{theorem}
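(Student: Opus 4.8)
The plan is to prove the three-way equivalence through the cycle $(1)\Rightarrow(2)\Rightarrow(3)\Rightarrow(1)$, following the same skeleton as the proof of Theorem~\ref{th:adv DPP, feasible upi, adv SSP equivalent - rsas' - srec}, but exploiting that the sa-rectangular operator $\twhat{T}^{\pi}$ in~\eqref{eq:operator hat T pi} places the minimization over $\cP$ \emph{inside} the sum over actions. For $(1)\Rightarrow(2)$ I would argue by contradiction, exactly as in Part~1 there: if~\eqref{eq:feas upi rsas' - sarec} fails for some $\pi\in\PiS$, $\bm{r}$, $\gamma$, then for every $\bm{P}\in\cP$ there is a state $\bar{s}$ with $\twhat{u}^{\pi}_{\bar{s}}<v^{\pi,\bm{P}}_{\bar{s}}$, while $\twhat{u}^{\pi}_{s}\le v^{\pi,\bm{P}}_{s}$ for all $s$ by~\eqref{eq:ordering u's}; taking $\bm{\mu}$ uniform and $\twhat{\bm{P}}$ a minimizer of $\bm{\mu}\tr\bm{v}^{\pi,\bm{P}}$ and summing over states yields $\min_{\bm{P}\in\cP}\bm{\mu}\tr\bm{v}^{\pi,\bm{P}}>\bm{\mu}\tr\twhat{\bm{u}}^{\pi}$, contradicting~\eqref{eq:adv dpp rsas' - sarec}.

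For $(2)\Rightarrow(3)$, given $\bm{V}\in\R^{\X\times\A\times\X}$ I would instantiate the MDP with $\pi$ the uniform policy, $\gamma=0$, and $\bm{r}=\bm{V}$. Then $\twhat{u}^{\pi}_{s}=\frac{1}{|\A|}\sum_{a\in\A}\min_{\bm{P}\in\cP}\langle\bm{P}_{sa},\bm{V}_{sa}\rangle$, and the point $\twhat{\bm{P}}\in\cP$ supplied by~\eqref{eq:feas upi rsas' - sarec} satisfies $\frac{1}{|\A|}\sum_{a\in\A}\langle\twhat{\bm{P}}_{sa},\bm{V}_{sa}\rangle=\twhat{u}^{\pi}_{s}$ for every $s$. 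Since $\twhat{\bm{P}}_{sa}\in\cP_{sa}$, each summand obeys $\langle\twhat{\bm{P}}_{sa},\bm{V}_{sa}\rangle\ge\min_{\bm{P}\in\cP}\langle\bm{P}_{sa},\bm{V}_{sa}\rangle$, so equality of the averages forces equality term by term, i.e.\ $\twhat{\bm{P}}\in\arg\min_{\bm{P}\in\cP}\langle\bm{P}_{sa},\bm{V}_{sa}\rangle$ for \emph{every} pair $(s,a)$; hence the intersection in~\eqref{eq:adv ssp rsas' - sarec} is nonempty. This term-by-term decoupling across $(s,a)$ is the genuinely new ingredient relative to the s-rectangular proof, and it is precisely here that the ``sum inside the minimum'' structure of $\twhat{T}^{\pi}$ is used.

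For $(3)\Rightarrow(1)$, fix $\bm{r}$, $\gamma$, $\pi$. One inequality is free, since $\twhat{\bm{u}}^{\pi}\le\bm{v}^{\pi,\bm{P}}$ for all $\bm{P}\in\cP$ gives $\bm{\mu}\tr\twhat{\bm{u}}^{\pi}\le\min_{\bm{P}\in\cP}\bm{\mu}\tr\bm{v}^{\pi,\bm{P}}$ for every $\bm{\mu}$. For the reverse inequality I would apply the SSP~\eqref{eq:adv ssp rsas' - sarec} to the vector $V_{sas'}=r_{sas'}+\gamma\twhat{u}^{\pi}_{s'}$, obtaining $\twhat{\bm{P}}\in\cP$ with $\twhat{\bm{P}}\in\arg\min_{\bm{P}\in\cP}\langle\bm{P}_{sa},\bm{V}_{sa}\rangle$ for all $(s,a)$. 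Because $\bm{P}_{sa}\tr\bm{V}_{sa}$ depends only on $\bm{P}_{sa}$, this says $\twhat{\bm{P}}_{sa}\tr(\bm{r}_{sa}+\gamma\twhat{\bm{u}}^{\pi})=\min_{\bm{P}\in\cP}\bm{P}_{sa}\tr(\bm{r}_{sa}+\gamma\twhat{\bm{u}}^{\pi})$ for every $(s,a)$. Substituting into the operator $T^{\pi}_{\twhat{\bm{P}}}$ of~\eqref{eq:operator T pi P} and comparing with the defining fixed-point equation of $\twhat{\bm{u}}^{\pi}$ yields $T^{\pi}_{\twhat{\bm{P}}}(\twhat{\bm{u}}^{\pi})_{s}=\sum_{a\in\A}\pi_{sa}\min_{\bm{P}\in\cP}\bm{P}_{sa}\tr(\bm{r}_{sa}+\gamma\twhat{\bm{u}}^{\pi})=\twhat{u}^{\pi}_{s}$ for all $s$, so $\twhat{\bm{u}}^{\pi}=\bm{v}^{\pi,\twhat{\bm{P}}}$ by uniqueness of the contraction's fixed point, whence $\bm{\mu}\tr\twhat{\bm{u}}^{\pi}\ge\min_{\bm{P}\in\cP}\bm{\mu}\tr\bm{v}^{\pi,\bm{P}}$ and~\eqref{eq:adv dpp rsas' - sarec} follows.

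I do not expect a serious obstacle: the argument is essentially a transcription of the s-rectangular case, only simpler, since the sa-operator decomposes cleanly over actions. The one step deserving a careful line is the term-by-term decoupling in $(2)\Rightarrow(3)$, which must be justified via the nonnegativity of each gap $\langle\twhat{\bm{P}}_{sa},\bm{V}_{sa}\rangle-\min_{\bm{P}\in\cP}\langle\bm{P}_{sa},\bm{V}_{sa}\rangle$ together with their vanishing average; the rest is routine.
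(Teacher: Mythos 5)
Your proposal is correct and follows essentially the same route as the paper's own proof in Appendix A: the contradiction argument with a uniform $\bm{\mu}$ for $(1)\Rightarrow(2)$, the instantiation $\gamma=0$, uniform $\pi$, $\bm{r}=\bm{V}$ for $(2)\Rightarrow(3)$, and the fixed-point uniqueness argument for $(3)\Rightarrow(1)$. Your explicit justification of the term-by-term decoupling (nonnegative gaps with vanishing average) is a detail the paper leaves implicit, but it is exactly the reasoning the paper relies on.
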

We note that the SSP~\eqref{eq:adv ssp rsas' - sarec} is stronger than the SSP~\eqref{eq:adv ssp rsas' - srec}, in the sense that if \eqref{eq:adv ssp rsas' - sarec} holds then \eqref{eq:adv ssp rsas' - srec} holds. This is consistent with the fact that sa-rectangular uncertainty sets are also s-rectangular and with Proposition \ref{prop:sa tractable are s tractable}.
We are now ready to characterize the uncertainty sets satisfying \eqref{eq:adv ssp rsas' - sarec}. Example \ref{ex:non s-rec satisfy ssp rsas'} also applies for the case of the sa-rectangular extension (because $|\A|=1$ in Example \ref{ex:non s-rec satisfy ssp rsas'}) and provides an example of a non-convex, non-sa-rectangular uncertainty set satisfying \eqref{eq:adv ssp rsas' - sarec}. However, among convex compact uncertainty sets, only sa-rectangular models of uncertainty may satisfy \eqref{eq:adv ssp rsas' - sarec}, as we show in the next theorem.
\begin{theorem}
    \label{th:reformulation adv ssp rsas' - sarec}
   Assume that $\cP$ is convex and compact. Then
      \begin{center}
       $\eqref{eq:adv ssp rsas' - sarec} \iff \cP = \times_{(s,a) \in \X \times \A} \cP_{sa}$, \; for $\cP_{sa}$ the marginals of $\cP$ for the state-action pair $(s,a) \in \X \times \A$, defined in \eqref{eq:marginal sa}.
   \end{center}
\end{theorem}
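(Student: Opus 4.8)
The plan is to mirror the proof of Theorem~\ref{th:reformulation adv ssp rsas' - srec}, replacing the index set $\X$ of states by the index set $\X \times \A$ of state-action pairs, and replacing the marginals $\cP_{s} \subset \Delta(\X)^{\A}$ by the marginals $\cP_{sa} \subset \Delta(\X)$. As in the s-rectangular case, only one implication is nontrivial: the converse $\cP = \times_{(s,a) \in \X \times \A}\cP_{sa} \Rightarrow \eqref{eq:adv ssp rsas' - sarec}$ is immediate, since given a compact sa-rectangular $\cP$ and any $\bm{V} \in \R^{\X \times \A \times \X}$, choosing $\twhat{\bm{P}}_{sa} \in \arg\min_{\bm{p} \in \cP_{sa}} \langle \bm{p},\bm{V}_{sa}\rangle$ for each $(s,a)$ produces a point $\twhat{\bm{P}} = (\twhat{\bm{P}}_{sa})_{sa} \in \cP$ which, because $\min_{\bm{P} \in \cP}\langle \bm{P}_{sa},\bm{V}_{sa}\rangle = \min_{\bm{p} \in \cP_{sa}}\langle \bm{p},\bm{V}_{sa}\rangle$ by the definition of $\cP_{sa}$ in \eqref{eq:marginal sa}, lies in $\cap_{(s,a)} \arg\min_{\bm{P} \in \cP}\langle \bm{P}_{sa},\bm{V}_{sa}\rangle$.

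For the forward implication, the first and central step is to show $\times_{(s,a) \in \X \times \A}\ext(\cP_{sa}) \subset \cP$. Fix $(\twhat{\bm{P}}_{sa})_{sa} \in \times_{(s,a)}\ext(\cP_{sa})$. Since $\cP$ is compact and convex, each marginal $\cP_{sa}$ is the image of $\cP$ under the linear projection $\bm{P} \mapsto \bm{P}_{sa}$, hence a compact convex subset of $\Delta(\X)$; by Straszewicz's theorem~\citep{straszewicz1935exponierte} (Theorem 18.6 in \cite{rockafellar2015convex}) each $\twhat{\bm{P}}_{sa}$ is then the limit of a sequence of exposed points $\twhat{\bm{P}}^{n}_{sa} \in \ext(\cP_{sa})$ satisfying $\{\twhat{\bm{P}}^{n}_{sa}\} = \arg\min_{\bm{p} \in \cP_{sa}}\langle \bm{p},\bm{V}^{n}_{sa}\rangle$ for suitable vectors $\bm{V}^{n}_{sa} \in \R^{\X}$. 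For each fixed $n$, assemble $\bm{V}^{n} \in \R^{\X \times \A \times \X}$ from the $\bm{V}^{n}_{sa}$ and invoke \eqref{eq:adv ssp rsas' - sarec}: there is $\twtilde{\bm{P}}^{n} \in \cP$ with $\twtilde{\bm{P}}^{n} \in \cap_{(s,a)}\arg\min_{\bm{P} \in \cP}\langle \bm{P}_{sa},\bm{V}^{n}_{sa}\rangle$, and since minimizing $\langle \bm{P}_{sa},\bm{V}^{n}_{sa}\rangle$ over $\cP$ is the same as minimizing $\langle \bm{p},\bm{V}^{n}_{sa}\rangle$ over $\cP_{sa}$, where the minimizer is \emph{unique}, we get $\twtilde{\bm{P}}^{n}_{sa} = \twhat{\bm{P}}^{n}_{sa}$ for every $(s,a)$. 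Hence $\twhat{\bm{P}}^{n} := (\twhat{\bm{P}}^{n}_{sa})_{sa} = \twtilde{\bm{P}}^{n} \in \cP$, and letting $n \to \infty$ with $\cP$ compact yields $\twhat{\bm{P}} = (\twhat{\bm{P}}_{sa})_{sa} \in \cP$.

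The second step is purely set-theoretic. From $\times_{(s,a)}\ext(\cP_{sa}) \subset \cP$, taking convex hulls and using the convexity of $\cP$ gives $\conv(\times_{(s,a)}\ext(\cP_{sa})) \subset \cP$; combined with the fact that $\conv(\cdot)$ commutes with finite Cartesian products, this yields $\times_{(s,a)}\conv(\ext(\cP_{sa})) \subset \cP$. Since each $\cP_{sa}$ is compact and convex, $\conv(\ext(\cP_{sa})) = \cP_{sa}$, so $\times_{(s,a)}\cP_{sa} \subset \cP$; as the reverse inclusion $\cP \subset \times_{(s,a)}\cP_{sa}$ always holds, we conclude $\cP = \times_{(s,a)}\cP_{sa}$.

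The main obstacle, exactly as in Theorem~\ref{th:reformulation adv ssp rsas' - srec}, is Step~1, and within it the use of Straszewicz's theorem to pass from extreme points (which need not be exposed) to exposed points, i.e. to \emph{unique} minimizers of linear functionals over each $\cP_{sa}$ --- uniqueness being precisely what lets the SSP force the assembled tuple to lie in $\cP$. The remainder is bookkeeping: the identification of the minimum over $\cP$ with the minimum over the marginal $\cP_{sa}$, the commutation of convex hull with finite products, and the Minkowski representation $\conv(\ext(\cP_{sa})) = \cP_{sa}$ of each compact convex marginal.
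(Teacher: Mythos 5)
Your proposal is correct and follows essentially the same route as the paper: the appendix proof of this theorem explicitly transposes the argument of Theorem~\ref{th:reformulation adv ssp rsas' - srec} from states to state-action pairs, using Straszewicz's theorem to approximate each extreme point of $\cP_{sa}$ by exposed points, invoking the SSP~\eqref{eq:adv ssp rsas' - sarec} with uniqueness of the minimizers to place the assembled tuple in $\cP$, passing to the limit by compactness, and finishing with convex hulls and $\conv(\ext(\cP_{sa}))=\cP_{sa}$. The only (cosmetic) difference is that you correctly note $\conv$ commutes with finite Cartesian products, whereas the paper cites the inclusion in the opposite direction to the one actually needed; since equality holds, both versions of the argument go through.
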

\paragraph{Discussion.} Our results in this section provide a complete characterization of tractable uncertainty sets, and show that only s-rectangular and sa-rectangular are tractable {\em in all generality}, i.e., for all values of the MDP parameters. This answers an important open question in the robust MDP literature, where new models of uncertainty are still being discovered even in recent years, often without a clear understanding of the tractability of these models~\citep{hu2024efficient}.  

We also provide a new understanding of the rectangularity property for robust MDPs. For the class of convex compact uncertainty sets, Theorem \ref{th:reformulation adv ssp rsas' - srec} and Theorem \ref{th:reformulation adv ssp rsas' - sarec} show that s-rectangularity and sa-rectangularity can be interpreted as {\em simultaneous solvability properties}, which directly relates to the Bellman operators $T^{\pi}$ and $\twhat{T}^{\pi}$ defined in Section \ref{sec:preliminaries} involving optimization problems over $\cP$ across different states and different state-action pairs. To the best of our knowledge, these reformulations of the rectangularity assumption are new, and they provide interesting insights on the underlying reason behind the tractability of s-rectangular and sa-rectangular models of uncertainty. We summarize the main takeaways of this section in Figure \ref{fig:contributions_first_part}.
\begin{figure}
    \centering
    \includegraphics[width=0.7\linewidth]{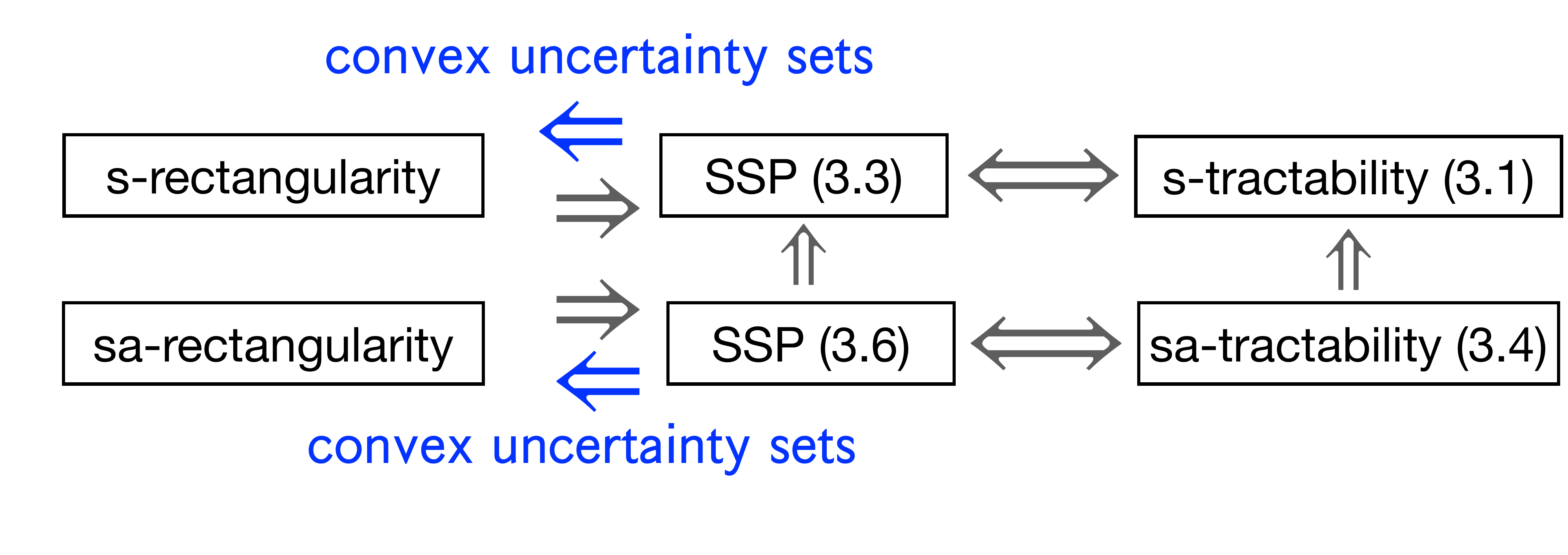}
    \caption{Summary of our main results from Section \ref{sec:ssp}. The uncertainty sets are assumed to be compact. The implications in blue require the additional assumption that the uncertainty set is convex.}
    \label{fig:contributions_first_part}
\end{figure}
\section{Weakly tractable uncertainty sets}\label{sec:weak ssp}
We now revisit the results from Section \ref{sec:ssp} under the assumption that the rewards are independent of the next state.
\begin{assumption}\label{assumption:rewards}
    The instantaneous rewards are independent of the next state:
    \[r_{sas'} = r_{sas''},\forall \; (s,a) \in \X \times \A, \forall \; (s',s'') \in \X \times \X.\]
\end{assumption}
This assumption is used in several RMDP papers, although it is not stated explicitly as an assumption but simply stated in the RMDP model~\cite{goh2018data,goyal2022robust,hu2024efficient},. With a slight abuse of notation, under Assumption \ref{assumption:rewards}, we write $r_{sa} \in \R$ for the value of $r_{sas'}$ (common across all the next states $s' \in \X$). We will now study the following weaker notions of s-tractability and sa-tractability.
\begin{definition}[Weakly Tractable Uncertainty Sets]\label{def:weak tractability}
Let $\cP$ be an uncertainty set.
\begin{enumerate}
    \item The set $\cP$ is {\em weakly s-tractable} if Equation~\eqref{eq:adv dpp intro srec} holds for all policies $\pi \in \Pi$ and for all choices of the rewards $\bm{r} \in \R^{\X \times \A \times \X}$ {\em satisfying Assumption \ref{assumption:rewards}}, the initial distribution $\bm{\mu} \in \Delta(\X)$ and the discount factor $\gamma \in [0,1)$. 
    \item The set $\cP$ is {\em weakly sa-tractable} if Equation~\eqref{eq:adv dpp intro sarec} holds for all policies $\pi \in \Pi$ and for all choices of the rewards $\bm{r} \in \R^{\X \times \A \times \X}$ {\em satisfying Assumption \ref{assumption:rewards}}, the initial distribution $\bm{\mu} \in \Delta(\X)$ and the discount factor $\gamma \in [0,1)$. 
\end{enumerate}
\end{definition}
One can interpret the notion of weak tractability as {\em ``tractability when Assumption \ref{assumption:rewards} holds for the rewards"}, whereas our original definition of tractability (Definition \ref{def:tractability}) does not impose any condition on the rewards.

\subsection{Weakly s-tractable uncertainty sets}
We now provide necessary and sufficient conditions for the weak s-tractability of an uncertainty set $\cP$.
\begin{theorem}\label{th:adv DPP, feasible upi, adv SSP equivalent - rsa - srec}
Let $\cP$ be compact (not necessarily convex). The following statements are {\bf equivalent}. 
    \begin{enumerate}
    \item The set $\cP$ is weakly s-tractable:
\begin{equation}\label{eq:adv dpp - rsa - srec}
        \forall \; \pi \in \PiS,\forall \; \bm{r} \in \R^{\X \times \A},\forall \;  \gamma \in [0,1), \forall \; \bm{\mu} \in \Delta(\X), \min_{\bm{P} \in \cP} \bm{\mu}\tr\bm{v}^{\pi,\bm{P}} = \bm{\mu}\tr\bm{u}^{\pi}
    \end{equation}
        \item The vector $\bm{u}^{\pi}$ is feasible under Assumption \ref{assumption:rewards}:
\begin{equation}\label{eq:feas upi - rsa - srec}
            \forall \; \pi \in \PiS,\forall \; \bm{r} \in \R^{\X \times \A},\forall \;  \gamma \in [0,1), \exists \; \twhat{\bm{P}} \in \cP, \bm{u}^{\pi} = \bm{v}^{\pi,\twhat{\bm{P}}}.
        \end{equation}
        \item The following weak simultaneous solvability property holds:
        \begin{equation}\label{eq:adv ssp - rsa - srec}
            \forall \;(\pi,\bm{V}) \in \PiS\times \R^{\X}, \cap_{s \in \X} \arg \min_{\bm{P} \in \cP}  \; \langle \bm{P}_{s},\bm{\pi}_{s}\bm{V}\tr \rangle \neq \emptyset.
        \end{equation}
    \end{enumerate}
\end{theorem}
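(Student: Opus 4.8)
The plan is to follow the same three-part cycle $(1)\Rightarrow(2)\Rightarrow(3)\Rightarrow(1)$ used for Theorem~\ref{th:adv DPP, feasible upi, adv SSP equivalent - rsas' - srec}, adapting each step to the restricted class of rewards allowed by Assumption~\ref{assumption:rewards}. The two facts I will use repeatedly are the identity $\langle \bm{P}_{s},\bm{\pi}_{s}\bm{V}\tr\rangle = \sum_{a\in\A}\pi_{sa}\bm{P}_{sa}\tr\bm{V}$, and the observation that under Assumption~\ref{assumption:rewards} one has $\bm{P}_{sa}\tr(\bm{r}_{sa}+\gamma\bm{v}) = r_{sa}+\gamma\bm{P}_{sa}\tr\bm{v}$ since $\bm{P}_{sa}\in\Delta(\X)$, so that $T^{\pi}(\bm{v})_{s} = \sum_{a}\pi_{sa}r_{sa} + \gamma\min_{\bm{P}\in\cP}\langle\bm{P}_{s},\bm{\pi}_{s}\bm{v}\tr\rangle$. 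For \emph{Part 1} ($(1)\Rightarrow(2)$), I would transcribe verbatim the contradiction argument of Part~1 of Theorem~\ref{th:adv DPP, feasible upi, adv SSP equivalent - rsas' - srec}, now applied only to rewards satisfying Assumption~\ref{assumption:rewards}: if $\bm{u}^{\pi}$ is infeasible for some such $(\pi,\bm{r},\gamma)$, then every $\bm{P}\in\cP$ has a coordinate $\bar s$ with $u^{\pi}_{\bar s}<v^{\pi,\bm{P}}_{\bar s}$ while \eqref{eq:ordering u's} gives $\bm{u}^{\pi}\le\bm{v}^{\pi,\bm{P}}$, and evaluating at a minimizer of $\bm{\mu}\tr\bm{v}^{\pi,\bm{P}}$ with $\bm{\mu}$ uniform yields a strict inequality contradicting \eqref{eq:adv dpp - rsa - srec}.

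The step I expect to be the real obstacle is \emph{Part 2} ($(2)\Rightarrow(3)$), because the clean trick from the unrestricted case (take $\gamma=0$, $\bm{r}=\bm{V}$) collapses: with next-state-independent rewards, $\gamma=0$ makes $\bm{v}^{\pi,\bm{P}}$ independent of $\bm{P}$ and encodes no information about $\bm{V}$. Instead, given $(\pi,\bm{V})\in\PiS\times\R^{\X}$, fix any $\gamma\in(0,1)$, set $m_{s}:=\min_{\bm{P}\in\cP}\langle\bm{P}_{s},\bm{\pi}_{s}\bm{V}\tr\rangle$, and choose the next-state-independent rewards $r_{sa}:=V_{s}-\gamma m_{s}$ for all $(s,a)\in\X\times\A$. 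Using the formula for $T^{\pi}$ above, a direct check gives $T^{\pi}(\bm{V})_{s}=(V_{s}-\gamma m_{s})+\gamma m_{s}=V_{s}$, so $\bm{V}$ is the fixed point, i.e. $\bm{u}^{\pi}=\bm{V}$. Feasibility \eqref{eq:feas upi - rsa - srec} then yields $\twhat{\bm{P}}\in\cP$ with $\bm{v}^{\pi,\twhat{\bm{P}}}=\bm{u}^{\pi}=\bm{V}$; writing out $\bm{V}=T^{\pi}_{\twhat{\bm{P}}}(\bm{V})$ and cancelling $\sum_{a}\pi_{sa}r_{sa}=V_{s}-\gamma m_{s}$ leaves $\gamma\langle\twhat{\bm{P}}_{s},\bm{\pi}_{s}\bm{V}\tr\rangle=\gamma m_{s}$ for every $s\in\X$; dividing by $\gamma>0$ gives $\twhat{\bm{P}}\in\cap_{s\in\X}\arg\min_{\bm{P}\in\cP}\langle\bm{P}_{s},\bm{\pi}_{s}\bm{V}\tr\rangle$, which is exactly \eqref{eq:adv ssp - rsa - srec}.

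For \emph{Part 3} ($(3)\Rightarrow(1)$), I would fix an admissible $(\pi,\bm{r},\gamma)$, apply \eqref{eq:adv ssp - rsa - srec} with $\bm{V}:=\bm{u}^{\pi}$ to obtain $\twhat{\bm{P}}\in\cP$ minimizing $\langle\bm{P}_{s},\bm{\pi}_{s}\bm{u}^{\pi}\tr\rangle$ simultaneously over all $s$, and substitute this into the defining equation $u^{\pi}_{s}=\sum_{a}\pi_{sa}r_{sa}+\gamma\min_{\bm{P}\in\cP}\langle\bm{P}_{s},\bm{\pi}_{s}\bm{u}^{\pi}\tr\rangle$ to conclude $\bm{u}^{\pi}=T^{\pi}_{\twhat{\bm{P}}}(\bm{u}^{\pi})$, hence $\bm{u}^{\pi}=\bm{v}^{\pi,\twhat{\bm{P}}}$ by uniqueness of the fixed point of the contraction $T^{\pi}_{\twhat{\bm{P}}}$. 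Then $\bm{\mu}\tr\bm{u}^{\pi}=\bm{\mu}\tr\bm{v}^{\pi,\twhat{\bm{P}}}\ge\min_{\bm{P}\in\cP}\bm{\mu}\tr\bm{v}^{\pi,\bm{P}}$, and combined with the always-valid reverse inequality from \eqref{eq:ordering u's} this gives \eqref{eq:adv dpp - rsa - srec}. The only genuinely new ingredient relative to Theorem~\ref{th:adv DPP, feasible upi, adv SSP equivalent - rsas' - srec} is the reward construction $r_{sa}=V_{s}-\gamma m_{s}$, which absorbs the discounted worst-case continuation value so that an arbitrary prescribed $\bm{V}$ is realized as $\bm{u}^{\pi}$; the edge case $\gamma=0$ is trivial in Parts 1 and 3 since then $\bm{v}^{\pi,\bm{P}}$ is $\bm{P}$-independent, and Part 2 only ever needs a single fixed $\gamma\in(0,1)$.
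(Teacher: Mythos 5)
Your proposal is correct and follows essentially the same route as the paper's proof: Part 1 is transcribed verbatim, Part 3 substitutes the simultaneous minimizer for $\bm{V}=\bm{u}^{\pi}$ into the fixed-point equation exactly as the paper does, and your reward construction $r_{sa}=V_{s}-\gamma m_{s}$ is identical to the paper's choice $r_{s}=V_{s}-\gamma\min_{\bm{P}\in\cP}\sum_{a}\pi_{sa}\bm{P}_{sa}\tr\bm{V}$. The one small point where you are actually more careful than the paper is insisting on $\gamma\in(0,1)$ in Part 2 so that the final cancellation can be divided by $\gamma$ (the paper says ``fix any $\gamma\in[0,1)$'', which would not yield the argmin membership at $\gamma=0$).
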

\begin{proof}{Proof of Theorem \ref{th:adv DPP, feasible upi, adv SSP equivalent - rsa - srec}.}

{\bf Part 1:} \eqref{eq:adv dpp - rsa - srec} $\Rightarrow$ \eqref{eq:feas upi - rsa - srec}. This part follows verbatim as Part 1 in the proof of Theorem \ref{th:adv DPP, feasible upi, adv SSP equivalent - rsas' - srec}.

{\bf Part 2:} \eqref{eq:feas upi - rsa - srec} $\Rightarrow$ \eqref{eq:adv ssp - rsa - srec}. 
Let $\bm{V} \in \R^{\X}$. Let $\pi \in \PiS$. We claim that we can find $\bm{r} \in \R^{\X \times \A}$ and $\gamma \in [0,1)$ for which $\bm{V} = \bm{u}^{\pi}$. To do so, it suffices to fix any $\gamma \in [0,1)$ and to pick $\bm{r} \in \R^{\X \times \A}$ such that $r_{sa} = r_{s}, \forall \; (s,a) \in \X \times \A$ and
\[r_{s} := V_{s} - \min_{\bm{P} \in \cP} \sum_{a \in \A} \gamma \pi_{sa}\bm{P}_{sa}\tr\bm{V}, \forall \; s \in \X.\]
For such a choice of the rewards, we indeed have $\bm{V} = T^{\pi}(\bm{V})$, and therefore $\bm{V} = \bm{u}^{\pi}$. 

From \eqref{eq:feas upi - rsa - srec} we have that there exists $ \twhat{\bm{P}} \in \cP$ such that $\bm{u}^{\pi} = \bm{v}^{\pi,\twhat{\bm{P}}}$, i.e., we know that 
\[V_{s} = \min_{\bm{P} \in \cP} \sum_{a \in \A} \pi_{sa}\left(r_{sa} + \gamma \bm{P}_{sa}\tr\bm{V}\right) = \sum_{a \in \A} \pi_{sa}\left(r_{sa} + \gamma \twhat{\bm{P}}_{sa}\tr\bm{V}\right), \forall \; s \in \X.\]
Since $\langle \bm{P}_{s},\bm{\pi}_s\bm{V}\tr \rangle = \sum_{a \in \A} \pi_{sa}\bm{P}_{sa}\tr\bm{V}$, this shows that 
\[\cap_{s \in \X} \arg\min_{\bm{P} \in \cP} \; \langle \bm{P}_{s},\bm{\pi}_s\bm{V}\tr \rangle \neq \emptyset.\]

{\bf Part 3:} \eqref{eq:adv ssp - rsa - srec} $\Rightarrow$ \eqref{eq:adv dpp - rsa - srec}. Let $\pi \in \PiS, \bm{r} \in \R^{\X \times \A}, \gamma \in [0,1)$. Note that \eqref{eq:adv ssp - rsa - srec} ensures that $\twhat{\bm{u}}^{\pi} = \bm{v}^{\pi,\twhat{\bm{P}}}$ for some $\twhat{\bm{P}} \in \cP$. Similarly as in part 3 of the proof of Theorem \ref{th:adv DPP, feasible upi, adv SSP equivalent - rsas' - srec}, we conclude that \eqref{eq:adv dpp - rsa - srec} holds.
    \hfill \Halmos    
\end{proof}
Recall that Theorem \ref{th:adv DPP, feasible upi, adv SSP equivalent - rsas' - srec} shows the equivalence (in general) of s-tractability with the SSP~\eqref{eq:adv ssp rsas' - srec}. Under Assumption \ref{assumption:rewards}, Theorem \ref{th:adv DPP, feasible upi, adv SSP equivalent - rsa - srec} shows the equivalence of weak s-tractability with the weak SSP~\eqref{eq:adv ssp - rsa - srec}. In particular, whereas \eqref{eq:adv ssp rsas' - srec} allows for different objective functions $\bm{V}_{s_{1}},\bm{V}_{s_{2}} \in \R^{\A \times \X}$ across different states $s_{1},s_{2} \in \X$, the objective functions in \eqref{eq:adv ssp - rsa - srec} all arise from the same vector $\bm{V} \in \R^{\X}$, common across all states, but combined with (potentially) different probability distributions $\bm{\pi}_{s} \in \Delta(\A)$ to form the matrix $\bm{\pi}_{s}\bm{V}\tr$ arising in the objective function $\bm{P} \mapsto \langle \bm{P}_{s},\bm{\pi}_{s} \bm{V}\tr \rangle$ for each $s \in \X$. Since \eqref{eq:adv ssp - rsa - srec} is a weaker condition than \eqref{eq:adv ssp rsas' - srec}, we can expect non-rectangular uncertainty sets to satisfy it. We will provide an analysis of the uncertainty sets satisfying \eqref{eq:adv ssp - rsa - srec} in Section \ref{sec:uncertainty sets for adv ssp}. Before doing so, we first analyze the implications of weak s-tractability in the next section.
\subsubsection{Implications of weak s-tractability}\label{sec:implications of ddp - srec}
We now show that weak s-tractability has several important consequences pertaining to the solvability properties of robust MDPs.
\paragraph{Solving the policy evaluation problem.} By definition, when $\cP$ is weakly s-tractable, we can efficiently solve the policy evaluation problem~\eqref{eq:policy evaluation}. Note this is the initial motivation for defining (weak) s-tractability: computing the worst-case return $\min_{\bm{P} \in \cP} \bm{\mu}\tr\bm{v}^{\pi,\bm{P}}$ of a policy $\pi \in \PiS$ brings down to computing $\bm{u}^{\pi}$, the fixed point of the operator $T^{\pi}$. A first classical approach to computing $\bm{u}^{\pi}$ is by {\em value iteration}: the sequence $\left(\bm{u}^{k}\right)_{k \in \N} $ defined as $\bm{u}^{k} := T^{\pi}(\bm{u}^{k-1}), \bm{u}^{0} \in \R^{\X}$ converges to $\bm{u}^{\pi}$, since $T^{\pi}$ is a contraction. We note that we can also compute $\bm{u}^{\pi}$ by convex programming using the following proposition:
\begin{proposition}[Adapted from \cite{nilim2005robust,grand2024convex}]\label{prop:contraction-program}
Let $F\colon  \R^{\X} \rightarrow \R^{\X}$ be a contraction such that $\bm{v} \leq \bm{u} \Rightarrow F(\bm{v}) \leq F(\bm{u})$. Let $\bm{w}\opt$ be the unique fixed point of $F$.
Then
\[ \{\bm{w}\opt\} = \arg \max \left\{ \sum_{s \in \X} v_{s} \; | \; \bm{v} \leq F(\bm{v}) \right\} \; .
\]
\end{proposition}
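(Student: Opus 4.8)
The plan is to establish two facts about the fixed point $\bm{w}\opt$ of $F$: (i) $\bm{w}\opt$ is feasible for the optimization problem, and (ii) $\bm{w}\opt$ dominates every feasible point componentwise. Uniqueness of the maximizer then follows at once from the strict monotonicity of the objective $\bm{v} \mapsto \sum_{s \in \X} v_s$ on the componentwise order.

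First I would note that $\bm{w}\opt$ is feasible: since $F(\bm{w}\opt) = \bm{w}\opt$, the constraint $\bm{v} \leq F(\bm{v})$ holds (with equality) at $\bm{v} = \bm{w}\opt$. Next comes the key step. Fix any feasible $\bm{v}$, i.e. $\bm{v} \leq F(\bm{v})$. Applying the monotonicity hypothesis $\bm{v} \leq \bm{u} \Rightarrow F(\bm{v}) \leq F(\bm{u})$ repeatedly yields $\bm{v} \leq F(\bm{v}) \leq F^{2}(\bm{v}) \leq \cdots \leq F^{k}(\bm{v})$ for every $k \in \N$, by induction. Because $F$ is a contraction on the finite-dimensional space $\R^{\X}$, the Banach fixed point theorem gives $F^{k}(\bm{v}) \to \bm{w}\opt$ in the norm for which $F$ contracts, hence componentwise (all norms on $\R^{\X}$ are equivalent). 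Passing to the limit in the componentwise inequality $\bm{v} \leq F^{k}(\bm{v})$ gives $\bm{v} \leq \bm{w}\opt$.

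To conclude, any feasible $\bm{v}$ satisfies $\bm{v} \leq \bm{w}\opt$, so $\sum_{s \in \X} v_s \leq \sum_{s \in \X} w\opt_s$; this shows $\bm{w}\opt$ is optimal, and since $\bm{v} \leq \bm{w}\opt$ componentwise forces $\sum_{s \in \X} v_s = \sum_{s \in \X} w\opt_s$ to imply $v_s = w\opt_s$ for all $s \in \X$, i.e. $\bm{v} = \bm{w}\opt$, the maximizer is unique. I do not anticipate a genuine obstacle; the only point deserving a line of care is the justification that the contraction iterates $F^{k}(\bm{v})$ converge to $\bm{w}\opt$ and that taking this limit preserves the inequalities — both of which are standard on $\R^{\X}$ and use only that $F$ is a contraction with fixed point $\bm{w}\opt$ together with equivalence of norms on a finite-dimensional space.
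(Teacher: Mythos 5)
Your proof is correct and follows the standard argument that the paper itself relies on (the paper only cites \cite{nilim2005robust,grand2024convex} for this proposition, and the iterate-the-monotone-operator-and-pass-to-the-limit technique is exactly what it uses in Appendix~\ref{app:proof value functions} for the ordering lemmas). Feasibility of $\bm{w}\opt$, domination of every feasible point via $\bm{v} \leq F^{k}(\bm{v}) \to \bm{w}\opt$, and uniqueness from the componentwise inequality are all handled properly.
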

Therefore, we can compute $\bm{u}^{\pi}$ as the unique optimal solution to the optimization program with linear objective and convex constraints:
\begin{equation}\label{eq:opt problem}
     \max \left\{ \sum_{s \in \X} u_{s} \; | \; \bm{u} \in \R^{\X}, u_{s} \leq \min_{\bm{P} \in \cP} \sum_{a \in \A} \pi_{sa}\bm{P}_{sa}\tr\left( \bm{r}_{sa} + \gamma \bm{u}\right), \forall \; s \in \X \right\}.
\end{equation}
Interestingly, \eqref{eq:opt problem} is always a convex optimization problem, even when $\cP$ is not convex, as the point-wise minimum of some linear functions is a concave function (section 3.2.3 in \cite{boyd2004convex}).
\paragraph{Solving the max-min problem.} 
We now show that under \eqref{eq:adv dpp - rsa - srec}, one can efficiently compute the best policy in the class of stationary policies, i.e. one can efficiently solve 
\begin{equation}\label{eq:max min problem}
    \max_{\pi \in \PiS} \min_{\bm{P} \in \cP} \bm{\mu}\tr\bm{v}^{\pi,\bm{P}}
\end{equation}
In particular, let us define the contracting operator $T:\R^{\X} \rightarrow \R^{\X}$ as 
\begin{equation}\label{eq:operator T}
    T(\bm{v})_{s}  := \max_{\pi \in \PiS} \; T^{\pi}(\bm{v})_{s} = \max_{\pi \in \PiS} \min_{\bm{P} \in \cP} T^{\pi}_{\bm{P}}(\bm{v})_{s}, \forall \; \bm{v} \in \R^{\X},\forall \; s \in \X
\end{equation}
and we define $\bm{u}\opt \in \R^{\X}$ as the unique fixed point of $T$: $\bm{u}\opt = T(\bm{u}\opt)$. The operator $T$ is the Bellman operator for the s-rectangular extension of $\cP$~\citep{wiesemann2013robust}. Note that since $\PiS = \Delta(\A)^{\X} = \times_{s \in \X} \Delta(\A)$ and since the optimization problem for the s-th component of $T$ only involves the s-th component of $\pi$, we can always find a policy $\pi\opt \in \PiS$ such that $\bm{u}\opt = \bm{u}^{\pi\opt}$. We have the following theorem.
\begin{theorem}\label{th:solving max min}
    Let $\cP$ be a compact uncertainty set (not necessarily convex) and assume that \eqref{eq:adv dpp - rsa - srec} holds. Then
    \[ \max_{\pi \in \PiS} \min_{\bm{P} \in \cP} \bm{\mu}\tr\bm{v}^{\pi,\bm{P}} = \bm{\mu}\tr\bm{u}\opt.\]
    Additionally, for any $\pi\opt \in \PiS$ such that $\bm{u}\opt = \bm{u}^{\pi\opt}$, we have 
    \[ \pi\opt \in \arg \max_{\pi \in \PiS} \min_{\bm{P} \in \cP} \bm{\mu}\tr\bm{v}^{\pi,\bm{P}}.\]
\end{theorem}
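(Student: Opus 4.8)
The plan is to turn the claimed identity into a comparison of fixed points. First I would use weak s-tractability: since \eqref{eq:adv dpp - rsa - srec} holds, for every $\pi \in \PiS$ we have $\min_{\bm{P} \in \cP} \bm{\mu}\tr\bm{v}^{\pi,\bm{P}} = \bm{\mu}\tr\bm{u}^{\pi}$, where $\bm{u}^{\pi}$ is the unique fixed point of the contraction $T^{\pi}$. Taking the maximum over $\pi \in \PiS$ therefore reduces the left-hand side of the theorem to $\max_{\pi \in \PiS} \bm{\mu}\tr\bm{u}^{\pi}$, and it is enough to show that this equals $\bm{\mu}\tr\bm{u}\opt$ and is attained at any $\pi\opt \in \PiS$ with $\bm{u}\opt = \bm{u}^{\pi\opt}$. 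Recall, from the discussion preceding the theorem, that such a $\pi\opt$ exists: because the $s$-th component of $T$ is a maximization over $\pi_{s} \in \Delta(\A)$ alone, choosing for each $s$ a maximizer $\pi\opt_{s}$ of $\pi_{s} \mapsto \min_{\bm{P} \in \cP} \sum_{a} \pi_{sa}\bm{P}_{sa}\tr(\bm{r}_{sa} + \gamma \bm{u}\opt)$ gives $T^{\pi\opt}(\bm{u}\opt) = T(\bm{u}\opt) = \bm{u}\opt$, so $\bm{u}\opt = \bm{u}^{\pi\opt}$.

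For the upper bound $\max_{\pi \in \PiS} \bm{\mu}\tr\bm{u}^{\pi} \leq \bm{\mu}\tr\bm{u}\opt$, the key step is to show $\bm{u}^{\pi} \leq \bm{u}\opt$ componentwise for every $\pi \in \PiS$. This follows from the pointwise inequality $T^{\pi}(\bm{v})_{s} \leq T(\bm{v})_{s} = \max_{\pi' \in \PiS} T^{\pi'}(\bm{v})_{s}$: starting from $\bm{u}^{\pi} = T^{\pi}(\bm{u}^{\pi}) \leq T(\bm{u}^{\pi})$ and iterating the monotone operator $T$ produces a nondecreasing sequence $\bm{u}^{\pi} \leq T(\bm{u}^{\pi}) \leq T^{2}(\bm{u}^{\pi}) \leq \cdots$ converging to the fixed point $\bm{u}\opt$, hence $\bm{u}^{\pi} \leq \bm{u}\opt$. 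Since $\bm{\mu} \geq \bm{0}$, this gives $\bm{\mu}\tr\bm{u}^{\pi} \leq \bm{\mu}\tr\bm{u}\opt$ for all $\pi \in \PiS$.

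For the matching lower bound I would simply evaluate the objective at $\pi\opt$: applying \eqref{eq:adv dpp - rsa - srec} to $\pi\opt$ yields $\min_{\bm{P} \in \cP} \bm{\mu}\tr\bm{v}^{\pi\opt,\bm{P}} = \bm{\mu}\tr\bm{u}^{\pi\opt} = \bm{\mu}\tr\bm{u}\opt$, so $\max_{\pi \in \PiS} \min_{\bm{P} \in \cP} \bm{\mu}\tr\bm{v}^{\pi,\bm{P}} \geq \bm{\mu}\tr\bm{u}\opt$. Combining the two bounds gives the stated equality, and since the value is attained at $\pi\opt$ we obtain $\pi\opt \in \arg\max_{\pi \in \PiS} \min_{\bm{P} \in \cP} \bm{\mu}\tr\bm{v}^{\pi,\bm{P}}$, which is the second assertion.

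The argument is mostly bookkeeping; the only delicate point is the order-comparison $\bm{u}^{\pi} \leq \bm{u}\opt$, for which I need both the monotonicity and the contraction of $T$ and of $T^{\pi}$ (standard properties of these Bellman operators, as used in \cite{wiesemann2013robust}). I would also be careful that weak s-tractability is quantified over \emph{all} stationary policies and \emph{all} admissible MDP data, so it legitimately applies simultaneously to each $\pi \in \PiS$ and to the fixed reward, discount factor, and initial distribution of the instance at hand.
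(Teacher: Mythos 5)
Your proposal is correct and follows essentially the same route as the paper's proof: reduce via weak s-tractability to $\max_{\pi}\bm{\mu}\tr\bm{u}^{\pi}$, exhibit $\pi\opt$ with $\bm{u}^{\pi\opt}=\bm{u}\opt$ from the componentwise structure of $T$, and establish $\bm{u}^{\pi}\leq\bm{u}\opt$ by monotonicity and contraction. The only cosmetic difference is that you iterate $T$ upward from $\bm{u}^{\pi}$ whereas the paper iterates $T^{\pi}$ downward from $\bm{u}\opt$; both are standard and equivalent.
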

\begin{proof}{Proof of Theorem \ref{th:solving max min}.}
Note that for any $\pi \in \PiS$, we have $T^{\pi}(\bm{u}\opt) \leq T(\bm{u}\opt) = \bm{u}\opt$, so that $\bm{u}^{\pi} \leq \bm{u}\opt$. Now for $\pi\opt$ such that $T(\bm{u}\opt) = T^{\pi\opt}(\bm{u}\opt)$, we find that $\bm{u}\opt$ is a fixed point of the contraction $T^{\pi\opt}$. But $T^{\pi\opt}$ has a unique fixed point $\bm{u}^{\pi\opt}$, so that $\bm{u}\opt = \bm{u}^{\pi\opt}$. This shows that
\[\max_{\pi \in \PiS} \bm{\mu}\tr\bm{u}^{\pi} = \bm{\mu}\tr\bm{u}^{\opt}.\]
Now that the last equality does not require \eqref{eq:adv dpp - rsa - srec}. If we assume \eqref{eq:adv dpp - rsa - srec}, we know additionally that
\[ \bm{\mu}\tr\bm{u}^{\pi} = \min_{\bm{P} \in \cP} \bm{\mu}\tr\bm{v}^{\pi,\bm{P}}\]
so that
\[ \max_{\pi \in \PiS} \min_{\bm{P} \in \cP} \bm{\mu}\tr\bm{v}^{\pi,\bm{P}} = \bm{\mu}\tr\bm{u}\opt.\]
Since $\pi\opt$ is such that $\bm{u}^{\pi\opt} = \bm{u}\opt \geq \bm{u}^{\pi}$ for any $\pi \in \PiS$ (see Appendix \ref{app:proof value functions}),
we conclude that
    \[ \pi\opt \in \arg \max_{\pi \in \PiS} \min_{\bm{P} \in \cP} \bm{\mu}\tr\bm{v}^{\pi,\bm{P}}.\]
        \hfill \Halmos
\end{proof}
Since $\bm{u}\opt$ is defined as the fixed point of a contraction, it can be computed efficiently with value iteration. We also have the following direct but important corollary, which shows that we can solve the \eqref{eq:max min problem} problem {\em independently of the initial distribution $\bm{\mu}$.}
\begin{corollary}\label{cor: adv dpp implies max min opt independent of mu}
    Let $\cP$ be a compact uncertainty set (not necessarily convex) and assume that \eqref{eq:adv dpp - rsa - srec} holds. Then 
    \[ \exists \; \pi\opt \in \PiS, \forall \; \bm{\mu} \in \Delta(\X), \pi\opt \in \arg \max_{\pi \in \PiS} \min_{\bm{P} \in \cP} \bm{\mu}\tr\bm{v}^{\pi,\bm{P}}.\]
\end{corollary}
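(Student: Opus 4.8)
The plan is to observe that Corollary \ref{cor: adv dpp implies max min opt independent of mu} is essentially a restatement of Theorem \ref{th:solving max min} once one notices that the policy $\pi\opt$ produced there does not depend on the initial distribution $\bm{\mu}$. Concretely, I would first recall that the operator $T$ in \eqref{eq:operator T} and its unique fixed point $\bm{u}\opt$ are defined purely in terms of the uncertainty set $\cP$, the rewards, and the discount factor $\gamma$ --- the initial distribution $\bm{\mu}$ plays no role. As noted right before Theorem \ref{th:solving max min}, because $\PiS = \times_{s \in \X} \Delta(\A)$ and the $s$-th component of $T$ only involves the $s$-th component of the policy, one can pick a single stationary policy $\pi\opt \in \PiS$ achieving $\bm{u}\opt = \bm{u}^{\pi\opt}$, and this choice is made independently of $\bm{\mu}$.

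Next I would fix such a $\pi\opt$ once and for all, and then let $\bm{\mu} \in \Delta(\X)$ be arbitrary. Applying Theorem \ref{th:solving max min} with this $\bm{\mu}$ gives both $\max_{\pi \in \PiS} \min_{\bm{P} \in \cP} \bm{\mu}\tr\bm{v}^{\pi,\bm{P}} = \bm{\mu}\tr\bm{u}\opt$ and, since $\bm{u}^{\pi\opt} = \bm{u}\opt$, that $\pi\opt \in \arg\max_{\pi \in \PiS} \min_{\bm{P} \in \cP} \bm{\mu}\tr\bm{v}^{\pi,\bm{P}}$. Since the same $\pi\opt$ works for every $\bm{\mu}$, this establishes the existence statement $\exists\, \pi\opt \in \PiS,\ \forall\, \bm{\mu} \in \Delta(\X),\ \pi\opt \in \arg\max_{\pi \in \PiS} \min_{\bm{P} \in \cP} \bm{\mu}\tr\bm{v}^{\pi,\bm{P}}$.

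There is no genuine obstacle here; the only thing to be careful about is the logical order of quantifiers --- one must exhibit $\pi\opt$ before introducing $\bm{\mu}$, which is exactly what the construction via the fixed point of $T$ allows, and which is why invoking Theorem \ref{th:solving max min} (rather than re-deriving anything about $\bm{\mu}$-dependent maximizers) suffices. Optionally I would add a one-line remark that $\pi\opt$ is moreover efficiently computable, since $\bm{u}\opt$ is the fixed point of the contraction $T$ and can be obtained by value iteration, with $\pi\opt$ read off state-by-state from the maximizers in \eqref{eq:operator T}.
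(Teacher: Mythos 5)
Your proposal is correct and follows exactly the paper's argument: fix $\pi\opt$ via the $\bm{\mu}$-independent fixed-point equation $\bm{u}^{\pi\opt}=\bm{u}\opt$ and then invoke Theorem \ref{th:solving max min} for each $\bm{\mu}$. The extra care you take with the quantifier order is a nice touch, but the substance is identical to the paper's proof.
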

\begin{proof}{Proof of Corollary \ref{cor: adv dpp implies max min opt independent of mu}.}
   From the proof of Theorem \ref{th:solving max min}, for any $\bm{\mu} \in \Delta(\X)$, a policy $\pi\opt \in \PiS$ solving $\max_{\pi \in \PiS} \min_{\bm{P} \in \cP} \bm{\mu}\tr\bm{v}^{\pi,\bm{P}}$ can be found by computing $\pi\opt$ such that $\bm{u}^{\pi\opt}=\bm{u}\opt$, and this last equation is independent of $\bm{\mu}$. \hfill \Halmos
\end{proof}
We conclude by noting that even though we can solve \eqref{eq:max min problem}, i.e. we can find the best stationary policy, we are not guaranteed to find an optimal policy over $\pi \in \PiH$. In particular, there may be a gap between history-dependent and stationary policies if $\cP$ is not convex, e.g. there are some instances s-rectangular instances for which
        \[\max_{\pi \in \PiS} \min_{\bm{P} \in \cP} \bm{\mu}\tr\bm{v}^{\pi,\bm{P}} < \sup_{\pi \in \PiH} \min_{\bm{P} \in \cP} \bm{\mu}\tr\bm{v}^{\pi,\bm{P}}.\]
We refer to \cite{wiesemann2013robust} and Section 5.1 in \cite{wang2023foundation} for more details on this issue.
\paragraph{Existence of stationary optimal policies for convex uncertainty sets.}
We now show that \eqref{eq:adv dpp - rsa - srec} implies the existence of a stationary optimal policy when $\cP$ is compact {\em and} convex.
\begin{theorem}\label{th:existence of stationary optimal policies}
    Let $\cP$ be compact convex and assume that \eqref{eq:adv dpp - rsa - srec} holds. Then there exists a stationary optimal policy:
    \[ \sup_{\pi \in \PiH} \inf_{\bm{P} \in \cP} \bm{\mu}\tr\bm{v}^{\pi,\bm{P}} = \max_{\pi \in \PiS} \min_{\bm{P} \in \cP} \bm{\mu}\tr\bm{v}^{\pi,\bm{P}}.\]
    Additionally, there exists a stationary policy $\pi\opt \in \PiS$ which is optimal for any $\bm{\mu} \in \Delta(\X)$.
\end{theorem}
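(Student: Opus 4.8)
<br>

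The goal is to prove Theorem \ref{th:existence of stationary optimal policies}: under \eqref{eq:adv dpp - rsa - srec} (weak s-tractability) and convexity+compactness of $\cP$, there is a stationary optimal policy, and one that works for all $\bm{\mu}$.

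\textbf{Plan.} The key is a minimax swap. For any history-dependent policy $\pi \in \PiH$ and any $\bm{P} \in \cP$, we have $\bm{\mu}\tr\bm{v}^{\pi,\bm{P}} \geq \min_{\bm{Q} \in \cP} \bm{\mu}\tr\bm{v}^{\pi,\bm{Q}}$, and by standard MDP theory (the nonrobust problem with fixed kernel $\bm{P}$ admits a stationary deterministic optimal policy), $\sup_{\pi \in \PiH} \bm{\mu}\tr\bm{v}^{\pi,\bm{P}} = \max_{\pi \in \PiS} \bm{\mu}\tr\bm{v}^{\pi,\bm{P}}$. So
\[
\sup_{\pi \in \PiH} \inf_{\bm{P} \in \cP} \bm{\mu}\tr\bm{v}^{\pi,\bm{P}} \leq \inf_{\bm{P} \in \cP} \sup_{\pi \in \PiH} \bm{\mu}\tr\bm{v}^{\pi,\bm{P}} = \min_{\bm{P} \in \cP} \max_{\pi \in \PiS} \bm{\mu}\tr\bm{v}^{\pi,\bm{P}}.
\]
The real content is to establish the reverse chain of inequalities tying this down to $\max_{\pi \in \PiS} \min_{\bm{P} \in \cP}$. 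First I would invoke Theorem \ref{th:solving max min}: under \eqref{eq:adv dpp - rsa - srec}, $\max_{\pi \in \PiS} \min_{\bm{P} \in \cP} \bm{\mu}\tr\bm{v}^{\pi,\bm{P}} = \bm{\mu}\tr\bm{u}\opt$, where $\bm{u}\opt$ is the fixed point of the robust Bellman operator $T$ in \eqref{eq:operator T}. The plan is then to show $\min_{\bm{P} \in \cP} \max_{\pi \in \PiS} \bm{\mu}\tr\bm{v}^{\pi,\bm{P}} \leq \bm{\mu}\tr\bm{u}\opt$; combined with the weak minimax inequality above and the trivial inequality $\max_{\pi} \min_{\bm{P}} \leq \sup_{\pi \in \PiH} \inf_{\bm{P}}$, all quantities collapse and we are done.

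\textbf{Main step: the strong-minimax inequality.} To prove $\min_{\bm{P} \in \cP} \max_{\pi \in \PiS} \bm{\mu}\tr\bm{v}^{\pi,\bm{P}} \leq \bm{\mu}\tr\bm{u}\opt$, I would apply Sion's minimax theorem to the function $(\pi, \bm{P}) \mapsto \bm{\mu}\tr\bm{v}^{\pi,\bm{P}}$ on $\PiS \times \cP$: $\PiS = \Delta(\A)^{\X}$ is convex compact, $\cP$ is convex compact by hypothesis, the map is continuous on this product (standard, since $\bm{v}^{\pi,\bm{P}} = (I - \gamma P^\pi)^{-1} r^\pi$ depends continuously on $(\pi,\bm{P})$), and — crucially — under Assumption \ref{assumption:rewards} the map is concave in $\pi$ for fixed $\bm{P}$ (it is the value function of a standard MDP, concave in the occupancy/policy representation when rewards do not depend on $s'$; equivalently it is the pointwise supremum description combined with the LP representation) and convex (in fact linear) in $\bm{P}$ is not needed — rather we need quasi-convexity in $\bm{P}$, which holds because $\bm{\mu}\tr\bm{v}^{\pi,\bm{P}}$, being the value of a minimization LP over occupancy measures, is... actually the cleanest route is: by Theorem \ref{th:solving max min} the left side equals $\bm{\mu}\tr\bm{u}^{\pi\opt}$ for $\pi\opt$ with $\bm{u}^{\pi\opt} = \bm{u}\opt$, and for this fixed stationary $\pi\opt$ we have by \eqref{eq:adv dpp - rsa - srec} that $\min_{\bm{P}} \bm{\mu}\tr\bm{v}^{\pi\opt,\bm{P}} = \bm{\mu}\tr\bm{u}\opt$; so it remains to show $\min_{\bm{P}} \max_{\pi \in \PiS} \bm{\mu}\tr\bm{v}^{\pi,\bm{P}} \leq \max_{\pi} \min_{\bm{P}} \bm{\mu}\tr\bm{v}^{\pi,\bm{P}}$, which is exactly the nontrivial minimax equality and where Sion's theorem (with concavity in $\pi$ from Assumption \ref{assumption:rewards} and convexity of $\cP$) does the work.

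\textbf{The $\bm{\mu}$-uniform policy and the hard part.} For the ``independent of $\bm{\mu}$'' claim, use the $\pi\opt \in \PiS$ from Corollary \ref{cor: adv dpp implies max min opt independent of mu} with $\bm{u}^{\pi\opt} = \bm{u}\opt$; the first part of the theorem then shows $\pi\opt$ attains $\sup_{\pi \in \PiH} \inf_{\bm{P}} \bm{\mu}\tr\bm{v}^{\pi,\bm{P}}$ for every $\bm{\mu}$, since the chain of equalities holds for all $\bm{\mu}$ simultaneously. I expect the main obstacle to be the concavity-in-$\pi$ ingredient needed for Sion's theorem: $\bm{\mu}\tr\bm{v}^{\pi,\bm{P}}$ is \emph{not} concave in $\pi$ in general (it is a ratio-type object through $(I-\gamma P^\pi)^{-1}$), and the standard fix is to reparametrize via state-action occupancy measures $\bm{\lambda} \in \Delta$ satisfying the Bellman flow constraints, on which the objective becomes \emph{linear} in $\bm{\lambda}$ — but then one must verify the objective is convex/quasi-convex in $\bm{P}$ on this reparametrization and that Assumption \ref{assumption:rewards} is what makes the reward term $\langle \bm{\lambda}, \bm{r} \rangle$ independent of $\bm{P}$. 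Handling this change of variables carefully, and checking that the feasible occupancy polytope is jointly well-behaved, is the delicate bookkeeping; everything else is assembling already-proven pieces (Theorem \ref{th:solving max min}, Corollary \ref{cor: adv dpp implies max min opt independent of mu}, and classical MDP facts).
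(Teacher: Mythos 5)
Your reduction of the theorem to the strong duality statement $\max_{\pi \in \PiS} \min_{\bm{P} \in \cP} \bm{\mu}\tr\bm{v}^{\pi,\bm{P}} = \min_{\bm{P} \in \cP} \max_{\pi \in \PiS} \bm{\mu}\tr\bm{v}^{\pi,\bm{P}}$, together with the observation that the inner supremum over $\PiH$ collapses to a max over $\PiS$ because each fixed $\bm{P}$ yields a nominal MDP, is exactly the paper's Lemma \ref{lem:strong duality implies stationary opt policies}, and the $\bm{\mu}$-uniformity via Corollary \ref{cor: adv dpp implies max min opt independent of mu} is also right. The gap is in your proof of strong duality itself. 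Applying Sion's theorem directly to $(\pi,\bm{P}) \mapsto \bm{\mu}\tr\bm{v}^{\pi,\bm{P}}$ fails for the reason you yourself flag: the map is not concave in $\pi$ (it involves $(I-\gamma P^\pi)^{-1}$), and the occupancy-measure reparametrization does not rescue it, because the Bellman flow polytope depends on $\bm{P}$, so after the change of variables the feasible set of the outer maximization moves with the inner variable and the saddle structure is destroyed. Your proposal acknowledges this obstacle but never resolves it, and notably never uses the weak SSP \eqref{eq:adv ssp - rsa - srec} in the duality argument --- which should be a warning sign, since the SSP is precisely the hypothesis that separates weakly tractable sets from intractable ones.

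The paper's resolution (Proposition \ref{prop: adv ssp implies max min duality}) is to apply the minimax theorem not to the global return but to the \emph{one-step} maps $(\pi,\bm{P}) \mapsto T^{\pi}_{\bm{P}}(\bm{u}\opt)_{s}$ evaluated at the fixed point $\bm{u}\opt$ of $T$: these are bilinear in $(\pi_s,\bm{P}_s)$ over the convex compact sets $\PiS$ and $\cP$, so Sion's theorem gives $\min_{\bm{P}}\max_{\pi} T^{\pi}_{\bm{P}}(\bm{u}\opt)_s = u\opt_s$ componentwise. The weak SSP is then invoked to produce a \emph{single} $\bm{P}\opt \in \cP$ lying in $\cap_{s\in\X}\arg\min_{\bm{P}\in\cP}\max_{\pi} T^{\pi}_{\bm{P}}(\bm{u}\opt)_s$, so that $\bm{u}\opt$ satisfies the Bellman optimality equation of the nominal MDP with kernel $\bm{P}\opt$, whence $\bm{\mu}\tr\bm{u}\opt = \max_{\pi\in\PiS}\bm{\mu}\tr\bm{v}^{\pi,\bm{P}\opt} \geq \min_{\bm{P}\in\cP}\max_{\pi\in\PiS}\bm{\mu}\tr\bm{v}^{\pi,\bm{P}}$. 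Without this simultaneous-minimizer step your chain of inequalities cannot be closed: the per-state worst-case kernels need not be jointly feasible in a non-rectangular $\cP$, and that is exactly the point at which weak s-tractability enters the proof.
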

Our strategy for proving Theorem \ref{th:existence of stationary optimal policies} relies on the following lemma.
\begin{lemma}\label{lem:strong duality implies stationary opt policies}
Let $\cP$ be a compact uncertainty set. Assume that strong duality holds:
    \begin{equation}\label{eq:strong duality max min}
    \max_{\pi \in \PiS} \min_{\bm{P} \in \cP} \bm{\mu}\tr\bm{v}^{\pi,\bm{P}} = \min_{\bm{P} \in \cP} \max_{\pi \in \PiS} \bm{\mu}\tr\bm{v}^{\pi,\bm{P}}.
\end{equation}
Then there exists a stationary optimal policy: \[ \sup_{\pi \in \PiH} \inf_{\bm{P} \in \cP} \bm{\mu}\tr\bm{v}^{\pi,\bm{P}} = \max_{\pi \in \PiS} \min_{\bm{P} \in \cP} \bm{\mu}\tr\bm{v}^{\pi,\bm{P}}.\]
\end{lemma}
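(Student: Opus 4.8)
The plan is to sandwich the quantity $\sup_{\pi \in \PiH} \inf_{\bm{P} \in \cP} \bm{\mu}\tr\bm{v}^{\pi,\bm{P}}$ between two copies of $\max_{\pi \in \PiS} \min_{\bm{P} \in \cP} \bm{\mu}\tr\bm{v}^{\pi,\bm{P}}$, namely to establish
\[
\max_{\pi \in \PiS} \min_{\bm{P} \in \cP} \bm{\mu}\tr\bm{v}^{\pi,\bm{P}} \;\leq\; \sup_{\pi \in \PiH} \inf_{\bm{P} \in \cP} \bm{\mu}\tr\bm{v}^{\pi,\bm{P}} \;\leq\; \max_{\pi \in \PiS} \min_{\bm{P} \in \cP} \bm{\mu}\tr\bm{v}^{\pi,\bm{P}}.
\]
The left inequality is immediate from $\PiS \subseteq \PiH$, so the content is the right inequality, which is where strong duality enters.

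For the right inequality I would chain four elementary steps. First, the general weak-duality inequality gives $\sup_{\pi \in \PiH} \inf_{\bm{P} \in \cP} \bm{\mu}\tr\bm{v}^{\pi,\bm{P}} \leq \inf_{\bm{P} \in \cP} \sup_{\pi \in \PiH} \bm{\mu}\tr\bm{v}^{\pi,\bm{P}}$. Second --- the one place where I use that $\bm{v}^{\pi,\bm{P}}$ is genuinely an MDP value function rather than an abstract payoff --- for each fixed $\bm{P} \in \cP$ the inner problem $\sup_{\pi \in \PiH} \bm{\mu}\tr\bm{v}^{\pi,\bm{P}}$ is an ordinary finite-state discounted MDP, so by classical MDP theory~\citep{puterman2014markov} it admits a stationary (indeed stationary deterministic) optimal policy, whence $\sup_{\pi \in \PiH} \bm{\mu}\tr\bm{v}^{\pi,\bm{P}} = \max_{\pi \in \PiS} \bm{\mu}\tr\bm{v}^{\pi,\bm{P}}$ for every $\bm{P}$. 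Third, substituting this identity and using compactness of $\cP$ together with continuity of $\bm{P} \mapsto \max_{\pi \in \PiS} \bm{\mu}\tr\bm{v}^{\pi,\bm{P}}$ (which follows from the continuity --- indeed smoothness --- of $(\pi,\bm{P}) \mapsto \bm{v}^{\pi,\bm{P}} = (I - \gamma P^{\pi})^{-1} r^{\pi}$ and compactness of $\PiS$), the infimum over $\cP$ is attained, so that $\inf_{\bm{P} \in \cP} \sup_{\pi \in \PiH} \bm{\mu}\tr\bm{v}^{\pi,\bm{P}} = \min_{\bm{P} \in \cP} \max_{\pi \in \PiS} \bm{\mu}\tr\bm{v}^{\pi,\bm{P}}$. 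Fourth, the assumed strong duality \eqref{eq:strong duality max min} rewrites the right-hand side as $\max_{\pi \in \PiS} \min_{\bm{P} \in \cP} \bm{\mu}\tr\bm{v}^{\pi,\bm{P}}$, closing the sandwich and hence proving the lemma.

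The hard part --- really the only nontrivial ingredient --- is the second step: reducing the inner maximization over history-dependent policies to stationary policies for a fixed transition kernel, which exploits the structure of non-robust MDPs. Everything else is weak duality, the hypothesis, and a routine compactness argument to replace infima by minima. I would also emphasize that this argument invokes strong duality exactly once and is otherwise model-agnostic, consistent with the paper's message that the existence of stationary optimal policies for RMDPs is fundamentally a consequence of strong duality rather than of any particular rectangularity structure.
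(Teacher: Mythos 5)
Your proof is correct and follows essentially the same argument as the paper: weak duality, the reduction $\sup_{\pi \in \PiH} \bm{\mu}\tr\bm{v}^{\pi,\bm{P}} = \max_{\pi \in \PiS} \bm{\mu}\tr\bm{v}^{\pi,\bm{P}}$ for fixed $\bm{P}$ via classical MDP theory, the strong duality hypothesis, and $\PiS \subset \PiH$ to close the sandwich. Your additional care in justifying that the infimum over $\cP$ is attained (via compactness and continuity) is a minor refinement the paper glosses over, not a different route.
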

Lemma \ref{lem:strong duality implies stationary opt policies} is used for the existence of stationary optimal policies for s-rectangular uncertainty sets~\citep{wiesemann2013robust} and r-rectangular uncertainty sets\citep{goyal2022robust}. For completeness, we include the proof in Appendix \ref{app:proof duality implies opt stationary policies}.
From Corollary \ref{cor: adv dpp implies max min opt independent of mu}, $\max_{\pi \in \PiS} \min_{\bm{P} \in \cP} \bm{\mu}\tr\bm{v}^{\pi,\bm{P}}$ admits an optimal solution $\pi \opt \in \PiS$ which can be picked independent of $\bm{\mu} \in \Delta(\X)$. Therefore, to prove Theorem \ref{th:existence of stationary optimal policies}, we only need to show that strong duality~\eqref{eq:strong duality max min} holds when weak s-tractability~\eqref{eq:adv dpp - rsa - srec} holds. The main idea of the proof is to notice that strong duality holds for the convex-concave saddle-point problems defining $\bm{u}\opt$ as the fixed point of the operator $T$.
\begin{proposition}\label{prop: adv ssp implies max min duality}
Let $\cP$ be compact convex and assume that \eqref{eq:adv dpp - rsa - srec} holds.
    Then for any $\bm{\mu} \in \Delta(\X)$, the following strong duality result holds:
    \[ \max_{\pi \in \PiS} \min_{\bm{P} \in \cP} \bm{\mu}\tr\bm{v}^{\pi,\bm{P}} = \min_{\bm{P} \in \cP} \max_{\pi \in \PiS} \bm{\mu}\tr\bm{v}^{\pi,\bm{P}}.\]
\end{proposition}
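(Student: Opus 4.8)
The plan is to reduce the strong duality claim for the max-min problem over value functions to a classical minimax theorem applied to the Bellman-type operator $T$ defined in \eqref{eq:operator T}, exploiting that $\cP$ is compact and convex and that $\PiS = \times_{s \in \X} \Delta(\A)$ is compact and convex as well. The key observation is that $T^\pi_{\bm P}(\bm v)_s = \sum_{a} \pi_{sa} \bm P_{sa}^\top(\bm r_{sa} + \gamma \bm v)$ is affine (hence concave-convex, and continuous) in $\pi$ for fixed $\bm P$ and affine in $\bm P$ for fixed $\pi$. First I would argue that, by the results of Section \ref{sec:ssp} (in particular the s-rectangular Bellman operator $T$ and its fixed point $\bm u\opt$), the quantity $\max_{\pi \in \PiS}\min_{\bm P \in \cP} \bm\mu^\top \bm v^{\pi,\bm P}$ equals $\bm\mu^\top \bm u\opt$ under \eqref{eq:adv dpp - rsa - srec} (this is exactly Theorem \ref{th:solving max min}). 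So it remains to show $\bm\mu^\top \bm u\opt = \min_{\bm P \in \cP}\max_{\pi \in \PiS} \bm\mu^\top \bm v^{\pi,\bm P}$, i.e. strong duality.

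The main step is to establish $\bm\mu^\top\bm u\opt \geq \min_{\bm P \in \cP}\max_{\pi\in\PiS}\bm\mu^\top\bm v^{\pi,\bm P}$; the reverse inequality is the trivial weak-duality direction. I would do this by producing, for each $\bm v$, a single $\bm P \in \cP$ that simultaneously solves the inner minimizations $\min_{\bm P\in\cP} T^\pi_{\bm P}(\bm v)_s$ for the optimal $\pi$ across all $s$; this is precisely where the SSP / feasibility hypothesis \eqref{eq:adv dpp - rsa - srec} (equivalently \eqref{eq:adv ssp - rsa - srec}) enters. Concretely: let $\pi\opt \in \PiS$ be such that $\bm u\opt = \bm u^{\pi\opt}$ (this exists as noted before Theorem \ref{th:solving max min}). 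Applying the weak SSP \eqref{eq:adv ssp - rsa - srec} with $\bm V = \bm u\opt$ and $\pi = \pi\opt$, there is $\twhat{\bm P}\in\cP$ with $\twhat{\bm P} \in \cap_{s\in\X}\arg\min_{\bm P \in \cP}\langle \bm P_s, \bm\pi\opt_s (\bm u\opt)^\top\rangle$, which (as in Part 3 of the proof of Theorem \ref{th:adv DPP, feasible upi, adv SSP equivalent - rsa - srec}) gives $\bm u\opt = \bm v^{\pi\opt,\twhat{\bm P}}$. I then claim $\twhat{\bm P}$ is optimal for the inner min in the RHS: I must show $\max_{\pi\in\PiS}\bm\mu^\top \bm v^{\pi,\twhat{\bm P}} \leq \bm\mu^\top\bm u\opt$. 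Since $\bm v^{\pi,\twhat{\bm P}}$ is the value of a fixed-transition MDP, $\max_{\pi\in\PiS}\bm v^{\pi,\twhat{\bm P}}$ is the fixed point of the (monotone, contracting) operator $\bm v \mapsto (\max_{\pi_s}\sum_a \pi_{sa}\twhat{\bm P}_{sa}^\top(\bm r_{sa}+\gamma\bm v))_s$; it suffices to verify $\bm u\opt$ is a super-fixed-point of that operator, i.e. $\bm u\opt_s \geq \max_{\pi_s}\sum_a\pi_{sa}\twhat{\bm P}_{sa}^\top(\bm r_{sa}+\gamma\bm u\opt)$ for all $s$. But $\bm u\opt = T(\bm u\opt)$ means $\bm u\opt_s = \max_{\pi_s}\min_{\bm P\in\cP}\sum_a\pi_{sa}\bm P_{sa}^\top(\bm r_{sa}+\gamma\bm u\opt) \leq \max_{\pi_s}\sum_a\pi_{sa}\twhat{\bm P}_{sa}^\top(\bm r_{sa}+\gamma\bm u\opt)$ is the wrong direction, so I instead use the special structure of $\twhat{\bm P}$: for $\pi = \pi\opt$ it is a minimizer, so $\min_{\bm P\in\cP}\sum_a\pi\opt_{sa}\bm P_{sa}^\top(\bm r_{sa}+\gamma\bm u\opt) = \sum_a\pi\opt_{sa}\twhat{\bm P}_{sa}^\top(\bm r_{sa}+\gamma\bm u\opt)$, hence $\bm u\opt_s = \max_{\pi_s}[\text{min over }\bm P] $ and the value $\max_{\pi_s}\sum_a\pi_{sa}\twhat{\bm P}_{sa}^\top(\cdots)$ could a priori exceed this. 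This shows the naive choice of a single $\twhat{\bm P}$ is not obviously enough, and signals that the cleanest route is via a genuine minimax theorem.

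Accordingly, the approach I would actually commit to is: apply Sion's minimax theorem to the function $(\pi,\bm P)\mapsto \bm\mu^\top\bm v^{\pi,\bm P}$ on $\PiS\times\cP$. Both sets are convex and compact. The remaining requirement is quasi-concavity in $\pi$ and quasi-convexity (indeed continuity) in $\bm P$. Continuity in $\bm P$ follows because $\bm v^{\pi,\bm P} = (\bm I - \gamma \bm P^\pi)^{-1}\bm r^\pi$ depends continuously (rationally) on $\bm P$ for $\gamma<1$. Concavity in $\pi$ is the delicate point, but it is known (see \citealt{wiesemann2013robust}) that $\pi \mapsto \bm\mu^\top\bm v^{\pi,\bm P}$ need not be concave in general — however, for \emph{fixed} $\bm P$ the map $\pi\mapsto\bm v^{\pi,\bm P}$ is the value function of a standard MDP and $\max_{\pi\in\PiS}$ is attained at a deterministic policy, so one can reduce $\PiS$ to the finite set $\PiSD$ where minimax over a finite strategy set against the convex compact $\cP$ follows from the classical minimax theorem (von Neumann / Kakutani fixed point), exactly the argument used in \citet{wiesemann2013robust,goyal2022robust}. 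The \textbf{main obstacle} is thus handling the non-concavity of $\pi \mapsto \bm\mu^\top\bm v^{\pi,\bm P}$: the fix is to pass through the s-rectangular Bellman operator $T$ and its componentwise saddle structure — for each coordinate $s$, the inner problem $\max_{\pi_s\in\Delta(\A)}\min_{\bm P\in\cP}\sum_a\pi_{sa}\bm P_{sa}^\top(\bm r_{sa}+\gamma\bm v)$ \emph{is} a bilinear (hence concave-convex) saddle point, so Sion applies coordinatewise; then one stitches the coordinatewise strong duality together with weak s-tractability \eqref{eq:adv dpp - rsa - srec} and Theorem \ref{th:solving max min} to lift it to the global strong duality $\max_{\pi\in\PiS}\min_{\bm P\in\cP}\bm\mu^\top\bm v^{\pi,\bm P} = \min_{\bm P\in\cP}\max_{\pi\in\PiS}\bm\mu^\top\bm v^{\pi,\bm P}$, which is the claim.
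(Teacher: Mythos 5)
Your setup matches the paper's: reduce via Theorem \ref{th:solving max min} to showing $\bm{\mu}\tr\bm{u}\opt = \min_{\bm{P}\in\cP}\max_{\pi\in\PiS}\bm{\mu}\tr\bm{v}^{\pi,\bm{P}}$, note that weak duality gives ``$\leq$'', and you correctly diagnose the one genuine obstacle: the $\twhat{\bm{P}}$ produced by the weak SSP satisfies $\bm{u}\opt=\bm{v}^{\pi\opt,\twhat{\bm{P}}}$, but that alone only yields $\max_{\pi}\bm{\mu}\tr\bm{v}^{\pi,\twhat{\bm{P}}}\geq\bm{\mu}\tr\bm{u}\opt$, the wrong direction. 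The problem is that you never get past this obstacle. Your two proposed fixes either fail or remain unexecuted: (i) Sion applied directly to $(\pi,\bm{P})\mapsto\bm{\mu}\tr\bm{v}^{\pi,\bm{P}}$ is inapplicable because this map is neither concave in $\pi$ nor convex in $\bm{P}$, and restricting to $\PiSD$ and invoking von Neumann does not help: the finite-pure-strategy minimax equality requires mixing, and a mixture of deterministic policies is not a randomized stationary policy at the level of value functions (the paper's own examples show randomization can strictly help in the robust problem, so $\max_{\pi\in\PiSD}\min_{\bm{P}}$ and $\max_{\pi\in\PiS}\min_{\bm{P}}$ genuinely differ). (ii) Your closing sentence --- ``Sion coordinatewise on the Bellman operator, then stitch'' --- names the right ingredients, but the stitching \emph{is} the proof, and it is exactly the step you declared yourself stuck on two sentences earlier.

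The missing chain, which is how the paper closes the argument, is this. Take $\pi\opt$ with $u\opt_s=\min_{\bm{P}\in\cP}T^{\pi\opt}_{\bm{P}}(\bm{u}\opt)_s$ for every $s$, and let $\twhat{\bm{P}}$ be the common minimizer supplied by the weak SSP applied to $(\pi\opt,\bm{u}\opt)$. For each $s$ the map $(\pi,\bm{P})\mapsto T^{\pi}_{\bm{P}}(\bm{u}\opt)_s$ is bilinear on the convex compact sets $\PiS\times\cP$, so the coordinatewise minimax equality $\max_{\pi}\min_{\bm{P}}T^{\pi}_{\bm{P}}(\bm{u}\opt)_s=\min_{\bm{P}}\max_{\pi}T^{\pi}_{\bm{P}}(\bm{u}\opt)_s=u\opt_s$ holds; the paper combines this with the SSP to exhibit a single $\bm{P}\opt\in\cP$ satisfying $u\opt_s=\max_{\pi}T^{\pi}_{\bm{P}\opt}(\bm{u}\opt)_s$ for \emph{all} $s$ simultaneously. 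That identity is the Bellman \emph{optimality} equation for the nominal MDP with fixed kernel $\bm{P}\opt$, whence $\bm{\mu}\tr\bm{u}\opt=\max_{\pi\in\PiS}\bm{\mu}\tr\bm{v}^{\pi,\bm{P}\opt}\geq\min_{\bm{P}\in\cP}\max_{\pi\in\PiS}\bm{\mu}\tr\bm{v}^{\pi,\bm{P}}$, closing the duality. The recognition of $\bm{u}\opt$ as the \emph{optimal} value function of a fixed-kernel MDP (rather than merely the value of the single policy $\pi\opt$ under $\twhat{\bm{P}}$) is the idea your proposal lacks; your worry that a minimizer of $\bm{P}\mapsto T^{\pi\opt}_{\bm{P}}(\bm{u}\opt)_s$ need not minimize $\bm{P}\mapsto\max_{\pi}T^{\pi}_{\bm{P}}(\bm{u}\opt)_s$ is legitimate, and resolving it requires invoking the saddle-point structure of each coordinatewise bilinear game together with the simultaneity provided by the SSP --- a step that has to be argued, not merely gestured at.
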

\begin{proof}{Proof of Proposition \ref{prop: adv ssp implies max min duality}.}
Recall that \eqref{eq:adv dpp - rsa - srec} implies that $\max_{\pi \in \PiS} \min_{\bm{P} \in \cP} \bm{\mu}\tr\bm{v}^{\pi,\bm{P}}  = \bm{\mu}\tr\bm{u}\opt$ from Theorem \ref{th:solving max min}. We will show that $\bm{\mu}\tr\bm{u}\opt = \min_{\bm{P} \in \cP} \max_{\pi \in \PiS} \bm{\mu}\tr\bm{v}^{\pi,\bm{P}}$.
First, we note that 
\[ \bm{\mu}\tr\bm{u}\opt \leq \min_{\bm{P} \in \cP} \max_{\pi \in \PiS} \bm{\mu}\tr\bm{v}^{\pi,\bm{P}}\]
from weak duality.
To show the equality in the above equation, we show the existence of $\bm{P}\opt \in \cP$ such that 
\begin{equation*}
     u\opt_{s} = \max_{\pi \in \PiS} T^{\pi}_{\bm{P}\opt}(\bm{u}\opt)_{s}, \forall \; s \in \X.
\end{equation*}
By definition of $\bm{u}\opt$, we have, for each $s \in \X$,
\[u\opt_s  =  \max_{\pi \in \PiS} \min_{\bm{P} \in \cP} T^{\pi}_{\bm{P}}(\bm{u}\opt)_{s}. \]

Now recall that $\PiS = \times_{s \in \X} \Delta(\A)$, so there exists a policy $\pi\opt \in \PiS$ such that for each $s \in \X$,
\[u\opt_s = \min_{\bm{P} \in \cP} T^{\pi\opt}_{\bm{P}}(\bm{u}\opt)_{s}.\]
Recall that \eqref{eq:adv dpp - rsa - srec} is equivalent to \eqref{eq:adv ssp - rsa - srec}, and \eqref{eq:adv ssp - rsa - srec} implies that
\[\cap_{s \in \X} \arg \min_{\bm{P} \in \cP} \langle \bm{P}_{sa},\bm{\pi}_{s}\opt\bm{u}^{\star \; \top} \rangle \neq \emptyset.\]
From the definition of $T^{\pi\opt}_{\bm{P}}(\bm{u}\opt)$, we therefore also know that
\[\cap_{s \in \X} \arg \min_{\bm{P} \in \cP} T^{\pi\opt}_{\bm{P}}(\bm{u}\opt)_{s} \neq \emptyset.\]
From the definition of the operator $T_{\bm{P}}^{\pi}$, we note that for each $s \in \X$ the map $\bm{v} \mapsto T_{\bm{P}}^{\pi}(\bm{v})_{s}$ is convex-concave (it is actually bilinear). Additionally, the sets $\cP$ and $\PiS$ are convex and compact. Therefore, the minimax theorem holds (see Theorem 4.3.1 in \cite{hiriart1996convex}), that is, for each $s \in \X$,
\[\max_{\pi \in \PiS} \min_{\bm{P} \in \cP} T^{\pi}_{\bm{P}}(\bm{u}\opt)_{s}=\min_{\bm{P} \in \cP} T^{\pi\opt}_{\bm{P}}(\bm{u}\opt)_{s} = \min_{\bm{P} \in \cP} \max_{\pi \in \PiS} T^{\pi}_{\bm{P}}(\bm{u}\opt)_{s}\]
so overall, we have shown
\[ \cap_{s \in \X} \arg \min_{\bm{P} \in \cP} \max_{\pi \in \Delta(\A)} T^{\pi}_{\bm{P}}(\bm{u}\opt)_{s} \neq \emptyset,\]
i.e., we have shown the existence of some $\bm{P}\opt \in \cP$ such that 
\[u\opt_{s} = \max_{\pi \in \PiS} T^{\pi}_{\bm{P}\opt}(\bm{u}\opt)_{s}, \forall \; s \in \X.\]
The above equation can be interpreted as the Bellman equation for the MDP where the transition probabilities is $\bm{P}\opt$. This shows that 
\[\bm{\mu}\tr\bm{u}\opt = \max_{\pi \in \PiS} \bm{\mu}\tr\bm{v}^{\pi,\bm{P}\opt} \geq \min_{\bm{P} \in \cP} \max_{\pi \in \PiS} \bm{\mu}\tr\bm{v}^{\pi,\bm{P}}\]
which concludes our proof.
\hfill \Halmos
\end{proof}
\begin{remark}[The finite horizon case.]\label{rmk:finite horizon}
    We have stated all our results in this paper for the case of the infinite horizon, where the values are defined as in \eqref{eq:value function}. It is straightforward to extend our results to the case of a {\em finite horizon}, i.e. to the case where the value functions are defined as
    \[v^{\pi,\bm{P}}_{s} = \E^{\pi,\bm{P}}\left[ \left. \sum_{t=0}^{H} \gamma^t r_{s_{t}a_{t}s_{t+1}} \; \right| \; s_{0} =s  \right], \forall \; s \in \X\]
    for some finite horizon $H \in \N$. To do so, one can consider a finite horizon MDP with copies of the state set at each epoch $t \in \{0,...,H\}$ and consider that the states attained at period $H$ are absorbing with a reward of $0$. {\em Stationary} policies and adversaries in this MDP interpreted as an infinite horizon decision problem corresponds to {\em Markovian} policies and {\em Markovian} adversaries in this MDP interpreted as a finite decision problem (without duplicating the state space). Therefore Theorem \ref{th:existence of stationary optimal policies} shows that the existence of {\em Markovian} optimal policies in robust MDPs with finite horizon, when the uncertainty set is weakly s-tractable.

We note that two important RMDP papers focus solely on the finite horizon case. \cite{ma2022distributionally} focus on a model called {\em d-rectangularity}, which exactly corresponds to r-rectangular uncertainty with a finite horizon. The authors in \cite{mannor2016robust} consider a model called {\em k-rectangularity}, where conditioned on some transition probabilities out of any subset of states $\X' \subset \X$, the transition probabilities out of the other states in $\X \setminus \X'$ take at most $k$ distinct values. For k-rectangular models, an optimal policy may be history-dependent, but one can compute it efficiently by augmenting the state space to keep track of the number of realizations of the values of the transition probabilities until the current period of decision. The results in \cite{mannor2016robust} rely on the decision-maker observing the entire realizations of the transition values over a trajectory and the finiteness over the possible remaining transitions induced by the k-rectangularity property. These two properties are very particular to the finite horizon case, and for this reason, k-rectangular models are not part of the models of uncertainty covered by this paper, where we focus on infinite horizon objectives.
\end{remark}
\paragraph{Non-stationary adversaries are equivalent to stationary adversaries.}
We conclude this section by showing that stationary and {\em non-stationary} adversaries are equivalent under weak s-tractability~\eqref{eq:adv dpp - rsa - srec} holds. A non-stationary adversary may pick different transition probabilities at different periods, based on the entire history up to time $t$. We will write $\cP_{\sf H}$ for the set of history-dependent transition probabilities. The question of the equivalence between stationary and non-stationary adversaries is already mentioned in the seminal paper~\cite{iyengar2005robust}.
We have the following theorem.
\begin{theorem}\label{th:stationary vs history dependent adversaries}
    Let $\cP$ be a compact convex uncertainty set and assume that weak s-tractability~\eqref{eq:adv dpp - rsa - srec} holds. Let $\pi \in \PiS$. Then 
    \begin{equation}\label{eq:stationary vs history dependent adversaries}
        \inf_{\bm{P} \in \cP_{\sf H}} \bm{\mu}\tr\bm{v}^{\pi,\bm{P}} = \min_{\bm{P} \in \cP} \bm{\mu}\tr\bm{v}^{\pi,\bm{P}}. 
        \end{equation}
\end{theorem}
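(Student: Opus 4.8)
The plan is to show the nontrivial inequality $\inf_{\bm{P} \in \cP_{\sf H}} \bm{\mu}\tr\bm{v}^{\pi,\bm{P}} \geq \min_{\bm{P} \in \cP} \bm{\mu}\tr\bm{v}^{\pi,\bm{P}}$; the reverse inequality is immediate since every stationary transition kernel is a (degenerate) history-dependent one, so $\cP \subseteq \cP_{\sf H}$. By Theorem \ref{th:adv DPP, feasible upi, adv SSP equivalent - rsa - srec}, weak s-tractability \eqref{eq:adv dpp - rsa - srec} gives us the reformulation $\min_{\bm{P} \in \cP} \bm{\mu}\tr\bm{v}^{\pi,\bm{P}} = \bm{\mu}\tr\bm{u}^{\pi}$, where $\bm{u}^{\pi}$ is the fixed point of $T^{\pi}$, and moreover \eqref{eq:feas upi - rsa - srec} says $\bm{u}^{\pi} = \bm{v}^{\pi,\twhat{\bm{P}}}$ for some stationary $\twhat{\bm{P}} \in \cP$. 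So it suffices to prove $\inf_{\bm{P} \in \cP_{\sf H}} \bm{\mu}\tr\bm{v}^{\pi,\bm{P}} \geq \bm{\mu}\tr\bm{u}^{\pi}$.

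First I would set up the value function machinery for history-dependent adversaries: fix a finite horizon $H$, let $\bm{P} \in \cP_{\sf H}$, and define the time-$t$, history-dependent value vectors $\bm{v}^{\pi,\bm{P}}_{t}$ for $t = H, H-1, \ldots, 0$ by backward induction, noting that since $\pi$ is stationary and the rewards satisfy Assumption \ref{assumption:rewards}, each backward step applies an operator of the form $\bm{v} \mapsto \sum_{a} \pi_{sa}\bm{Q}_{sa}\tr(\bm{r}_{sa} + \gamma \bm{v})$ for the transition kernel $\bm{Q}$ the adversary selects at that history. The key structural observation is that the constraint on the adversary is that, at each node, the chosen slice $(\bm{Q}_{sa})_{a}$ must lie in $\cP_{s}$ (the marginal), because $\bm{Q} \in \cP \subseteq \twtilde{\cP} = \times_{s} \cP_{s}$ — and actually we need less: we only need that the per-state linear optimization over $\cP$ of the relevant objective be achievable simultaneously, which is exactly what the weak SSP \eqref{eq:adv ssp - rsa - srec} provides. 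The plan is then an induction over the horizon showing $\bm{v}^{\pi,\bm{P}}_{t} \geq \bm{u}^{\pi}$ componentwise for every history node at level $t$: the base case at $t = H$ (or the infinite-horizon tail) is handled by a truncation/contraction argument, and the inductive step uses that applying $T^{\pi}_{\bm{Q}}$ for any feasible $\bm{Q}$ to something $\geq \bm{u}^{\pi}$ yields something $\geq T^{\pi}_{\bm{Q}}(\bm{u}^{\pi}) \geq T^{\pi}(\bm{u}^{\pi}) = \bm{u}^{\pi}$, using monotonicity of $T^{\pi}_{\bm{Q}}$ and $T^{\pi}_{\bm{Q}}(\bm{u}^\pi) \geq \min_{\bm{P}\in\cP} T^\pi_{\bm{P}}(\bm{u}^\pi)$ pointwise. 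Passing $H \to \infty$ and taking the infimum over $\bm{P} \in \cP_{\sf H}$ then gives $\inf_{\bm{P} \in \cP_{\sf H}} \bm{\mu}\tr\bm{v}^{\pi,\bm{P}} \geq \bm{\mu}\tr\bm{u}^{\pi}$, completing the proof.

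Actually, a cleaner route avoids the horizon induction entirely: I would use the convexity of $\cP$ together with \eqref{eq:adv ssp - rsa - srec} to argue that there is a \emph{single} stationary $\bm{P}\opt \in \cP$ which is worst-case simultaneously across all states when evaluated against $\bm{u}^{\pi}$, i.e. $\bm{P}\opt \in \cap_{s} \arg\min_{\bm{P}\in\cP} \langle \bm{P}_s, \bm{\pi}_s \bm{u}^{\pi\top}\rangle$, so that $\bm{u}^{\pi} = \bm{v}^{\pi,\bm{P}\opt}$. Then for any history-dependent adversary $\bm{P} \in \cP_{\sf H}$, I compare $\bm{v}^{\pi,\bm{P}}$ against the Bellman recursion anchored at $\bm{u}^{\pi}$: at every history node the adversary's choice of transition slice is constrained to give objective value $\geq \langle \bm{P}\opt_s, \bm{\pi}_s \bm{u}^{\pi\top}\rangle$ on the value vector $\bm{u}^{\pi}$, and by monotonicity this propagates to $\bm{v}^{\pi,\bm{P}} \geq \bm{u}^{\pi}$. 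Either way, the main obstacle is the bookkeeping for history-dependent adversaries — making precise that the adversary's feasible set at each node is governed by the marginals $\cP_s$ and that the weak SSP is exactly the condition ensuring the stationary worst case cannot be beaten by exploiting history — and I expect this is where convexity of $\cP$ is genuinely used (to rule out a non-rectangular $\cP$ where correlating choices across states over time could help the adversary), mirroring how convexity enters Proposition \ref{prop: adv ssp implies max min duality}.
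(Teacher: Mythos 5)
Your proposal is correct, but it takes a genuinely different route from the paper. The paper's proof is a three-step sandwich: by the weak SSP, $\min_{\bm{P}\in\cP}\bm{\mu}\tr\bm{v}^{\pi,\bm{P}}$ equals the same minimum over the s-rectangular extension $\twtilde{\cP}$; since $\twtilde{\cP}$ is s-rectangular, the adversary's problem over $\twtilde{\cP}_{\sf H}$ is itself a discounted MDP with compact action sets $\cP_{s}$, so a stationary adversary is optimal and the inf over $\twtilde{\cP}_{\sf H}$ equals the min over $\twtilde{\cP}$; finally $\cP_{\sf H}\subseteq\twtilde{\cP}_{\sf H}$ closes the chain. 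You instead argue directly: anchoring at $\bm{u}^{\pi}=T^{\pi}(\bm{u}^{\pi})$ and using that $T^{\pi}_{\bm{Q}}(\bm{u}^{\pi})\geq T^{\pi}(\bm{u}^{\pi})=\bm{u}^{\pi}$ for every $\bm{Q}\in\cP$, a backward induction over a truncated horizon (with a $\gamma^{H}$ tail bound) shows $\bm{v}^{\pi,\bm{P}}\geq\bm{u}^{\pi}$ componentwise for every $\bm{P}\in\cP_{\sf H}$, and weak s-tractability identifies $\bm{\mu}\tr\bm{u}^{\pi}$ with $\min_{\bm{P}\in\cP}\bm{\mu}\tr\bm{v}^{\pi,\bm{P}}$. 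Your argument is more self-contained: it avoids invoking the classical fact that rectangular adversarial MDPs admit stationary optimal adversaries, at the cost of carrying out the horizon-truncation bookkeeping explicitly. It also exposes something your closing speculation gets slightly wrong: convexity of $\cP$ is \emph{not} where the work happens in your route --- the propagation step uses only monotonicity of $T^{\pi}_{\bm{Q}}$, the definition of $T^{\pi}$ as a pointwise minimum over the compact set $\cP$, and the weak s-tractability hypothesis, so your proof in fact establishes the theorem without convexity (the paper's hypothesis includes it for consistency with the surrounding results such as Theorem \ref{th:existence of stationary optimal policies}, where convexity genuinely enters via the minimax theorem). Your second, ``cleaner'' route is essentially the same induction in disguise, since once you have $\bm{u}^{\pi}=\bm{v}^{\pi,\bm{P}\opt}$ you still need the node-by-node monotone propagation to dominate an arbitrary history-dependent adversary.
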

\proof{Proof of Theorem \ref{th:stationary vs history dependent adversaries}.}
With $\hat{\cP}$ the s-rectangular extension of $\cP$, we have we have
\[ \min_{\bm{P} \in \cP} \bm{\mu}\tr\bm{v}^{\pi,\bm{P}} = \min_{\bm{P} \in \hat{\cP}} \bm{\mu}\tr\bm{v}^{\pi,\bm{P}} = \inf_{\bm{P} \in \hat{\cP}_{\sf H}} \bm{\mu}\tr\bm{v}^{\pi,\bm{P}} \leq  \inf_{\bm{P} \in \cP_{\sf H}} \bm{\mu}\tr\bm{v}^{\pi,\bm{P}}  \]
where the first equality is from the weak SSP~\eqref{eq:adv ssp - rsa - srec}, the second assumption is from the fact that $\hat{\cP}$ is s-rectangular so that $\inf_{\bm{P} \in \hat{\cP}_{\sf H}} \bm{\mu}\tr\bm{v}^{\pi,\bm{P}}$ is an MDP played by the adversary with a convex action set $\hat{\cP}$, and is therefore solved by a stationary transition probabilities $\bm{P} \in \hat{\cP}$; here we use $\hat{\cP}_{\sf H}$ for the history-dependent policies of the adversary over the s-rectangular extension $\hat{\cP}$ of the uncertainty set $\cP$. We refer to \cite{goyal2022robust} and \cite{ho2021partial} for more context on the interpretation of the policy evaluation problem as an {\em adversarial MDP}. Finally, the inequality is from $\cP \subseteq \hat{\cP}$. Overall we can conclude that 
    \[ \inf_{\bm{P} \in \cP_{\sf H}} \bm{\mu}\tr\bm{v}^{\pi,\bm{P}} = \min_{\bm{P} \in \cP} \bm{\mu}\tr\bm{v}^{\pi,\bm{P}}.  \]
    \hfill \Halmos
\endproof
The existence of stationary optimal policies (for which we require convex uncertainty sets satisfying the weak SSP, as from Theorem \ref{th:existence of stationary optimal policies}) implies the following theorem. 
\begin{theorem}\label{th:sup inf sup min}
    Let $\cP$ be a compact convex uncertainty set and assume that weak s-tractability~\eqref{eq:adv dpp - rsa - srec} holds. Then 
\[ \sup_{\pi \in \PiH}  \inf_{\bm{P} \in \cP_{\sf H}} \bm{\mu}\tr\bm{v}^{\pi,\bm{P}} = \sup_{\pi \in \PiH} \min_{\bm{P} \in \cP} \bm{\mu}\tr\bm{v}^{\pi,\bm{P}}.
\]
\end{theorem}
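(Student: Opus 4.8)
The plan is to establish the identity by a sandwich argument, using the two preceding results as the only nontrivial ingredients. For the ``$\le$'' direction, I would first note that every stationary adversary $\bm{P}\in\cP$ can be viewed as a history-independent element of $\cP_{\sf H}$, so that $\cP\subseteq\cP_{\sf H}$; enlarging the adversary's feasible set can only lower the inner value, hence for each fixed $\pi\in\PiH$ we have $\inf_{\bm{P}\in\cP_{\sf H}}\bm{\mu}\tr\bm{v}^{\pi,\bm{P}}\le\min_{\bm{P}\in\cP}\bm{\mu}\tr\bm{v}^{\pi,\bm{P}}$ (the right-hand minimum is attained since $\cP$ is compact and $\bm{P}\mapsto\bm{\mu}\tr\bm{v}^{\pi,\bm{P}}$ is continuous). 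Taking the supremum over $\pi\in\PiH$ on both sides gives $\sup_{\pi\in\PiH}\inf_{\bm{P}\in\cP_{\sf H}}\bm{\mu}\tr\bm{v}^{\pi,\bm{P}}\le\sup_{\pi\in\PiH}\min_{\bm{P}\in\cP}\bm{\mu}\tr\bm{v}^{\pi,\bm{P}}$.

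For the reverse ``$\ge$'' direction, the idea is to show that both sides of the desired equality coincide with $\max_{\pi\in\PiS}\min_{\bm{P}\in\cP}\bm{\mu}\tr\bm{v}^{\pi,\bm{P}}$ (equivalently $\bm{\mu}\tr\bm{u}\opt$, by Theorem \ref{th:solving max min}). On the right-hand side, since $\cP$ is compact the inner infimum over $\cP$ is a minimum, so Theorem \ref{th:existence of stationary optimal policies} yields $\sup_{\pi\in\PiH}\min_{\bm{P}\in\cP}\bm{\mu}\tr\bm{v}^{\pi,\bm{P}}=\max_{\pi\in\PiS}\min_{\bm{P}\in\cP}\bm{\mu}\tr\bm{v}^{\pi,\bm{P}}$. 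On the left-hand side, I would restrict the outer supremum to the subclass $\PiS\subseteq\PiH$; for each $\pi\in\PiS$, Theorem \ref{th:stationary vs history dependent adversaries} applies and gives $\inf_{\bm{P}\in\cP_{\sf H}}\bm{\mu}\tr\bm{v}^{\pi,\bm{P}}=\min_{\bm{P}\in\cP}\bm{\mu}\tr\bm{v}^{\pi,\bm{P}}$, so $\sup_{\pi\in\PiH}\inf_{\bm{P}\in\cP_{\sf H}}\bm{\mu}\tr\bm{v}^{\pi,\bm{P}}\ge\sup_{\pi\in\PiS}\min_{\bm{P}\in\cP}\bm{\mu}\tr\bm{v}^{\pi,\bm{P}}=\max_{\pi\in\PiS}\min_{\bm{P}\in\cP}\bm{\mu}\tr\bm{v}^{\pi,\bm{P}}$. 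Combining the two displays gives the claimed equality.

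I do not expect a genuine obstacle: the two hard facts here -- the existence of a stationary optimal decision-maker (Theorem \ref{th:existence of stationary optimal policies}) and the collapse of history-dependent adversaries to stationary ones against a stationary policy (Theorem \ref{th:stationary vs history dependent adversaries}) -- have already been proved, and the present statement is essentially a bookkeeping combination of them via the inclusions $\PiS\subseteq\PiH$ and $\cP\subseteq\cP_{\sf H}$. The only points deserving a line of care are (i) making explicit the canonical embedding of $\cP$ into $\cP_{\sf H}$ that justifies the set inclusion, and (ii) invoking compactness of $\cP$ (with continuity of $\bm{P}\mapsto\bm{\mu}\tr\bm{v}^{\pi,\bm{P}}$) so that the ``$\min$'' over $\cP$ is attained and agrees with the ``$\inf$'' appearing in Theorem \ref{th:existence of stationary optimal policies} and Theorem \ref{th:stationary vs history dependent adversaries}.
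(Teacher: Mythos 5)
Your proposal is correct and follows essentially the same route as the paper: both directions reduce to combining Theorem \ref{th:existence of stationary optimal policies} (stationary optimal decision-maker) with Theorem \ref{th:stationary vs history dependent adversaries} (collapse of history-dependent adversaries against a stationary policy), together with the inclusions $\PiS \subseteq \PiH$ and $\cP \subseteq \cP_{\sf H}$. The only cosmetic difference is that you spell out the easy ``$\leq$'' direction explicitly, which the paper leaves implicit.
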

\proof{Proof of Theorem \ref{th:sup inf sup min}.}
We have 
\[ \sup_{\pi \in \PiH} \min_{\bm{P} \in \cP} \bm{\mu}\tr\bm{v}^{\pi,\bm{P}} = \max_{\pi \in \PiS} \min_{\bm{P} \in \cP} \bm{\mu}\tr\bm{v}^{\pi,\bm{P}} = \max_{\pi \in \PiS} \inf_{\bm{P} \in \cP_{\sf H}} \bm{\mu}\tr\bm{v}^{\pi,\bm{P}} 
 \leq \sup_{\pi \in \PiH} \inf_{\bm{P} \in \cP_{\sf H}} \bm{\mu}\tr\bm{v}^{\pi,\bm{P}}\]
 where the first equality is from Corollary \ref{cor: adv dpp implies max min opt independent of mu}, the second equality is from Theorem \ref{th:stationary vs history dependent adversaries}, and the inequality is from $\PiS \subset \PiH$.
This implies that 
\[ \sup_{\pi \in \PiH}  \inf_{\bm{P} \in \cP_{\sf H}} \bm{\mu}\tr\bm{v}^{\pi,\bm{P}} = \sup_{\pi \in \PiH} \min_{\bm{P} \in \cP} \bm{\mu}\tr\bm{v}^{\pi,\bm{P}}.
\]
\hfill \Halmos
\endproof
The questions addressed in our last two theorems have been studied in several recent papers for the case of s-rectangular  models~\cite{wang2023foundation,grand2023beyond}. Theorem \ref{th:stationary vs history dependent adversaries} provides a generalization to any uncertainty set satisfying the weak SSP, and enables a concise and unifying proof.
\subsubsection{Uncertainty sets satisfying the weak SSP}\label{sec:uncertainty sets for adv ssp}
We now describe some examples of uncertainty sets satisfying \eqref{eq:adv ssp - rsa - srec}. As we will see, we can find several uncertainty sets satisfying \eqref{eq:adv ssp - rsa - srec}, some of which have already been introduced in the literature (e.g. r-rectangular uncertainty), while some others are new. Recall that the main objective of this paper is to provide a {\em unifying} approach to the (weak) tractability of uncertainty models, in contrast to the previous approaches undertaken in previous works that study each model of uncertainty on a case-by-case basis. Indeed, given the results in Theorem \ref{th:adv DPP, feasible upi, adv SSP equivalent - rsa - srec}, one can obtain the weak tractability of a model of uncertainty ``by design", i.e. by ensuring that \eqref{eq:adv ssp - rsa - srec} holds, without any additional proof needed. 

As a warm-up, we start with some simple and known models where \eqref{eq:adv ssp - rsa - srec} holds before discussing more complex models of uncertainty. 
\paragraph{s-rectangular uncertainty sets.}
As highlighted in the previous section, compact s-rectangular uncertainty sets, satisfy the SSP~\eqref{eq:adv ssp rsas' - srec} and therefore they also satisfy the weak SSP~ \eqref{eq:adv ssp - rsa - srec}.
\paragraph{The case of independent transitions.}
Assume that $\cP$ is such that the transition probabilities are independent of the action taken and of the current state: for some compact set $\cX \subset \Delta(\X)$,
\[ \cP = \{ \left(\bm{P}_{sa}\right)_{sa} \; | \; \bm{P}_{sa} = \bm{p}, \forall \; (s,a) \in \X \times \A, \bm{p} \in \cX \}.\]
 In this case, \eqref{eq:adv ssp - rsa - srec} holds: for each $s \in \X$, the objective function $\bm{P} \mapsto \langle \bm{P}_s, \bm{\pi}_s\bm{V}\tr \rangle$ becomes
\[ \langle \bm{P}_s, \bm{\pi}_s\bm{V}\tr \rangle = \sum_{a \in \A} \pi_{sa}\bm{P}_{sa}\tr\bm{V} = \sum_{a \in \A} \pi_{sa}\bm{p}\tr\bm{V} = \bm{p}\tr\bm{v}\]
and therefore $\cap_{s \in \X} \arg \min_{\bm{P} \in \cP}  \langle \bm{P}_s, \bm{\pi}_s\bm{V}\tr \rangle = \{\left(\bm{p}\right)_{sa} \; | \; \bm{p} \in  \arg \min_{\bm{p} \in \cX} \bm{p}\tr\bm{v}\} \neq \emptyset.$
Note that this model is a special case of r-rectangular models with $r=1$, as we introduce next.
\paragraph{r-rectangular uncertainty.}
 Equation \eqref{eq:adv ssp - rsa - srec} holds for compact r-rectangular uncertainty sets as defined in Section \ref{sec:preliminaries}. Indeed, let $\pi \in \PiS$ and $\bm{V} \in \R^{\X}$. Then
 \[\langle \bm{P}_s, \bm{\pi}_s\bm{V}\tr \rangle = \sum_{a \in \A} \pi_{sa}\bm{P}_{sa}\tr\bm{V} = \sum_{a \in \A} \pi_{sa} \sum_{i=1}^{r} u^{i}_{sa} \bm{w}^{i \; \top}\bm{V}.\]
Since $(\bm{w}^{1},...,\bm{w}^{r}) \in \cW^{1} \times ... \times \cW^{r}$, we can pick the factor $\bm{w}^{i \star}$ in $\arg \min_{\bm{w}^{i} \in \cW^{i}} \bm{w}^{i \; \top} \bm{V}$ for each $i \in [r]$. Crucially, for each $i$, the objective function $\bm{w}^{i}\mapsto \bm{w}^{i \top}\bm{V}$ is independent of $ s \in \X$. The resulting transition probabilities $\bm{P}\opt = \left(\sum_{i=1}^{r}u^{i}_{sa}\bm{w}^{i \star}\right)_{sa}$ is feasible:  $\bm{P}\opt \in \cP$, and $\bm{P}\opt$ is such that
\[ \bm{P}\opt \in \cap_{s \in \X} \arg \min_{\bm{P} \in \cP} \langle \bm{P}_s, \bm{\pi}_s\bm{V}\tr \rangle.\]
In the rest of this section, we provide several generalizations of s-rectangular and r-rectangular uncertainty sets that satisfy \eqref{eq:adv ssp - rsa - srec}. We decide not to give names to these models of uncertainty since we mainly use them to exemplify the vast range of uncertainty models satisfying the weak SSP (and therefore enjoying all the nice properties highlighted in Section \ref{sec:implications of ddp - srec}), rather than to advocate for their practical use. These models of uncertainty are nested, in the sense that we introduce more and more general models of uncertainty satisfying the weak SSP~\eqref{eq:adv ssp - rsa - srec}, as summarized in Figure \ref{fig:inclusion_model}.
\paragraph{Generalizing s-rectangularity and r-rectangularity: part 1.} We start with a simple model based on a partition of the state space.
Assume that the state set can be partitioned as $\X = \X_{1} \cup \X_{2}$, such that the transition probabilities in $\X_{1}$ and in $\X_{2}$ can be chosen independently, and such that the transitions out of states in $\X_{1}$ are s-rectangular, while the transitions out of states in $\X_{2}$ are r-rectangular:
\begin{equation}\label{eq:mixing r-rec s-rec part 1}
    \begin{aligned}
    \cP &  = \cP_{1} \times \cP_{2}, \cP_{1} \subset \Delta(\X)^{\X_{1} \times \A}, \cP_{2} \subset \Delta(\X)^{\X_{2}\times \A}, \\
    \cP_{1} & = \times_{s \in \X_{1}} \cP_{s}, \cP_{s} \subset \Delta(\X)^{\A}, \\
    \cP_{2} & = \left\{ \left(\sum_{a \in \A} u^{i}_{sa}\bm{w}^{i}\right)_{s \in \X_{2},a \in \A} \; | \; (\bm{w}^{1},...,\bm{w}^{r}) \in \cW^{1} \times ... \times \cW^{r} \right\}
\end{aligned}
\end{equation}
for some compact sets $\cP_{1}$ and $\cW =  \cW^{1} \times ... \times \cW^{r}$.
 The uncertainty model~\eqref{eq:mixing r-rec s-rec part 1} can be used if the decision-maker believes that the transition probabilities are related across states, but only for a subset $\X_{2}$ of the entire state set $\X$.
We have the following proposition, which shows that the model of uncertainty~\eqref{eq:mixing r-rec s-rec part 1} {\em strictly} generalizes s-rectangularity and r-rectangularity.
\begin{proposition}\label{prop:mixing r-rec s-rec part 1}
    \begin{enumerate}
        \item Any s-rectangular uncertainty set can be written as in \eqref{eq:mixing r-rec s-rec part 1}.
        \item Any r-rectangular uncertainty set can be written as in \eqref{eq:mixing r-rec s-rec part 1}.
        \item There exists an uncertainty set that can be written as in \eqref{eq:mixing r-rec s-rec part 1} and that is neither s-rectangular nor r-rectangular.
        \item Let $\cP$ be an uncertainty set that can be written as in \eqref{eq:mixing r-rec s-rec part 1} with $\cP_{1}$ and $\cW$ convex compact sets. Then $\cP$ is convex compact.
    \end{enumerate}
\end{proposition}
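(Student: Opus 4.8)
The plan is to treat the four items essentially independently; the only one with real content is item~3.

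For item~1, given an s-rectangular set $\cP=\times_{s\in\X}\cP_s$, I would use the degenerate partition $\X_1=\X$, $\X_2=\emptyset$ in \eqref{eq:mixing r-rec s-rec part 1}: then $\cP_1:=\cP$ already has the prescribed form $\times_{s\in\X_1}\cP_s$, the block $\cP_2$ is the trivial singleton indexed by the empty set (vacuously of the r-rectangular form, e.g. with $r=1$), and $\cP=\cP_1\times\cP_2$; compactness of $\cP$ passes to each marginal $\cP_s$, since marginals are images under continuous coordinate projections. For item~2, given an r-rectangular set $\cP$ as in \eqref{eq:r-rectangular uncertainty} with compact factor sets, I would instead take $\X_1=\emptyset$, $\X_2=\X$, so that $\cP_1$ is the trivial singleton and $\cP_2:=\cP$ is already an r-rectangular block over $\X_2$ with $\cW=\cW^1\times\cdots\times\cW^r$ compact. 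For item~4, observe that the block $\cP_2$ in \eqref{eq:mixing r-rec s-rec part 1} is the image of $\cW$ under the linear map $(\bm{w}^i)_i\mapsto(\sum_{i=1}^{r}u^i_{sa}\bm{w}^i)_{s\in\X_2,a\in\A}$; a linear image of a convex compact set is convex and compact, so $\cP_2$ is convex compact whenever $\cW$ is, and then $\cP=\cP_1\times\cP_2$ is convex compact as a product of two such sets.

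For item~3, I would exhibit a concrete small instance. Let $\X=\{1,2,3\}$, $\A=\{a,b\}$, $\X_1=\{1\}$, $\X_2=\{2,3\}$. For the s-rectangular block, take $\cP_1=\cP_{s=1}$ to be all pairs $(\bm{P}_{1a},\bm{P}_{1b})$ with both distributions supported on $\{1,2\}$ and, writing $\theta_a=P_{1a}(1)$ and $\theta_b=P_{1b}(1)$, satisfying $\theta_a,\theta_b\ge 0$ and $\theta_a+\theta_b\le 1$, i.e. a two-dimensional triangle; being a single-state set it is automatically s-rectangular, and it is convex and compact. For the r-rectangular block take $r=1$, $\cW^1=\Delta(\X)$ and $u^1_{sa}=1$ for all $s\in\X_2$, $a\in\A$, so that $\cP_2$ forces $\bm{P}_{2a}=\bm{P}_{2b}=\bm{P}_{3a}=\bm{P}_{3b}$. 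Then $\cP=\cP_1\times\cP_2$ has the form \eqref{eq:mixing r-rec s-rec part 1} by construction. That $\cP$ is not s-rectangular follows because its marginal over $\X_2$ equals $\cP_2$, which contains $(\bm{e}_2,\bm{e}_2)$ and $(\bm{e}_3,\bm{e}_3)$ but not $(\bm{e}_2,\bm{e}_3)$, while the marginal of an s-rectangular set is always s-rectangular. That $\cP$ is not r-rectangular follows because its marginal over $\X_1$ equals $\cP_1$, and the marginal of an r-rectangular set is always r-rectangular (keep the factors, restrict the coefficients to $s\in\X_1$); so it suffices to show the triangle $\cP_1$ is not r-rectangular. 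Suppose it were: dropping factors with zero coefficient at state~$1$, and using that $P_{1a}(3)=P_{1b}(3)=0$ forces the remaining factors to be supported on $\{1,2\}$, we could write $\theta_a=\sum_i u^i_{1a}t^i$ and $\theta_b=\sum_i u^i_{1b}t^i$ where $t^i:=w^i(1)$ ranges over a compact $\cW^i\subseteq[0,1]$, so $\cP_1$, viewed in $\R^2$ via the coordinates $(\theta_a,\theta_b)$, equals the Minkowski sum $\sum_i (u^i_{1a},u^i_{1b})\,\cW^i$. Passing to convex hulls, each $\conv(\cW^i)$ is an interval, so $\cP_1=\conv(\cP_1)$ would be a Minkowski sum of segments, i.e. a zonotope; but a triangle, not being centrally symmetric, is not a zonotope --- a contradiction.

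The hard part is item~3, and within it the delicate step is certifying that the triangle block is \emph{not} r-rectangular, for which the zonotope obstruction above is the crux; this is also why that block's transitions are restricted to only two next-states, so that each factor set's convex hull collapses to an interval --- in dimension two or more a convex non-zonotope polytope can still be r-rectangular, so the low-dimensional choice is essential. The two ``the marginal of an (s- or r-)rectangular set is again (s- or r-)rectangular'' reductions, which let the failures of the individual blocks lift to $\cP$, I would record as short auxiliary lemmas.
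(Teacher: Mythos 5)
Your items 1, 2 and 4 coincide with the paper's proof (degenerate partitions $\X_1=\X$ or $\X_1=\emptyset$, and the observation that $\cP_2$ is an affine image of $\cW$ while $\cP=\cP_1\times\cP_2$). Item 3 is where you genuinely diverge, and your argument is correct but takes a different route. The paper exhibits a five/six-state instance adapted from Wiesemann et al.\ and rules out r-rectangularity \emph{indirectly}, by invoking two external results: that this instance forces a randomized optimal policy, and that r-rectangular RMDPs always admit deterministic optimal policies (citing \cite{goyal2022robust}); it rules out s-rectangularity by noting that one parameter couples transitions out of two states. Your construction is smaller (three states), self-contained, and identifies a purely geometric obstruction: after projecting onto the single s-rectangular state and reducing to the two coordinates $(\theta_a,\theta_b)$, any r-rectangular representation would express the triangle as a Minkowski sum of segments, i.e.\ a zonotope, which is impossible since a triangle is not centrally symmetric. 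The two auxiliary reductions you flag (projections of s-/r-rectangular sets onto a state subset retain the respective form) are immediate from the definitions, and your care in restricting the triangle block to two next-states --- so that each $\conv(\cW^i)$ collapses to an interval --- is exactly the point that makes the zonotope obstruction bite. The trade-off: the paper's proof is shorter but leans on nontrivial cited facts about optimal policies of r-rectangular RMDPs, whereas yours is longer but elementary and exposes a structural reason (central symmetry of sections) why a set can fail to be r-rectangular, which is arguably more informative. Both establish the claim.
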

\begin{proof}{Proof of Proposition \ref{prop:mixing r-rec s-rec part 1}.}
\begin{enumerate}
    \item Let $\cP$ be s-rectangular. Then we can write $\cP$ as in \eqref{eq:mixing r-rec s-rec part 1} by choosing $\X_{1} = \X$ and $\X_{2} = \emptyset$.
    \item Let $\cP$ be r-rectangular. Then we can write $\cP$ as in \eqref{eq:mixing r-rec s-rec part 1} by choosing $\X_{1} = \emptyset$ and $\X_{2} = \X$.
    \item Consider the robust MDP instance represented in Figure \ref{fig:mixing r-rec s-rec part 1}, with 5 states $\{a,b,c,d,e,f\}$. The transitions and rewards are indicated above the solid arcs for the two actions $\{a_{1},a_{2}\}$. We parametrize the uncertainty set $\cP$ describing all possible transitions by the two scalars $\xi \in [0,1]$ and $p \in [0,1]$.

    The authors in \cite{wiesemann2013robust} show that starting in State $a$ it is optimal to randomize and choose $a_{1}$ with probability $1/2$ and $a_{2}$ with probability $1/2$ (see Proposition 1 in \cite{wiesemann2013robust}). This shows that this uncertainty set $\cP$ cannot be r-rectangular, since for r-rectangular uncertainty sets, there always exists an optimal policy that can be chosen deterministic~\citep{goyal2022robust}. Additionally, we note that $\cP$ is not s-rectangular, since the parameter $p \in [0,1]$ influences the transitions out of two different states (State $e$ and State $f$). However, it is clear that $\cP$ can be written as in \eqref{eq:mixing r-rec s-rec part 1} by partitioning $\X = \X_{1} \cup \X_{2}$ with $\X_{1} = \{a,b,c\}$ and $\X_{2} = \{e,f\}$.
\begin{figure}
\begin{center}
    \begin{subfigure}{0.4\textwidth}
\includegraphics[width=\linewidth]{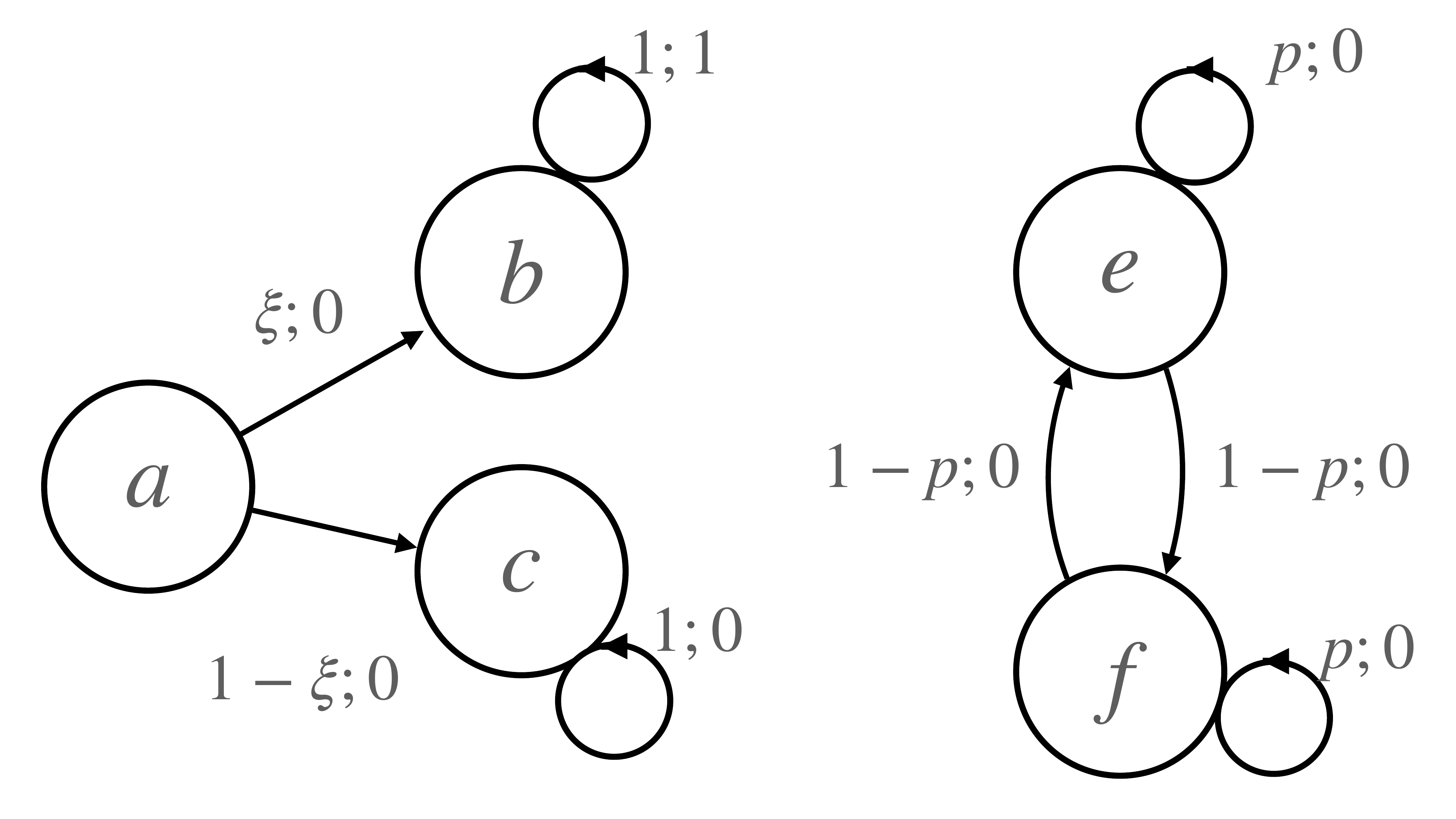}
    \caption{Transition for action $a_1$}
    \label{fig:rmdp_a1_dummy}
    \end{subfigure}
    \begin{subfigure}{0.4\textwidth}
\includegraphics[width=\linewidth]{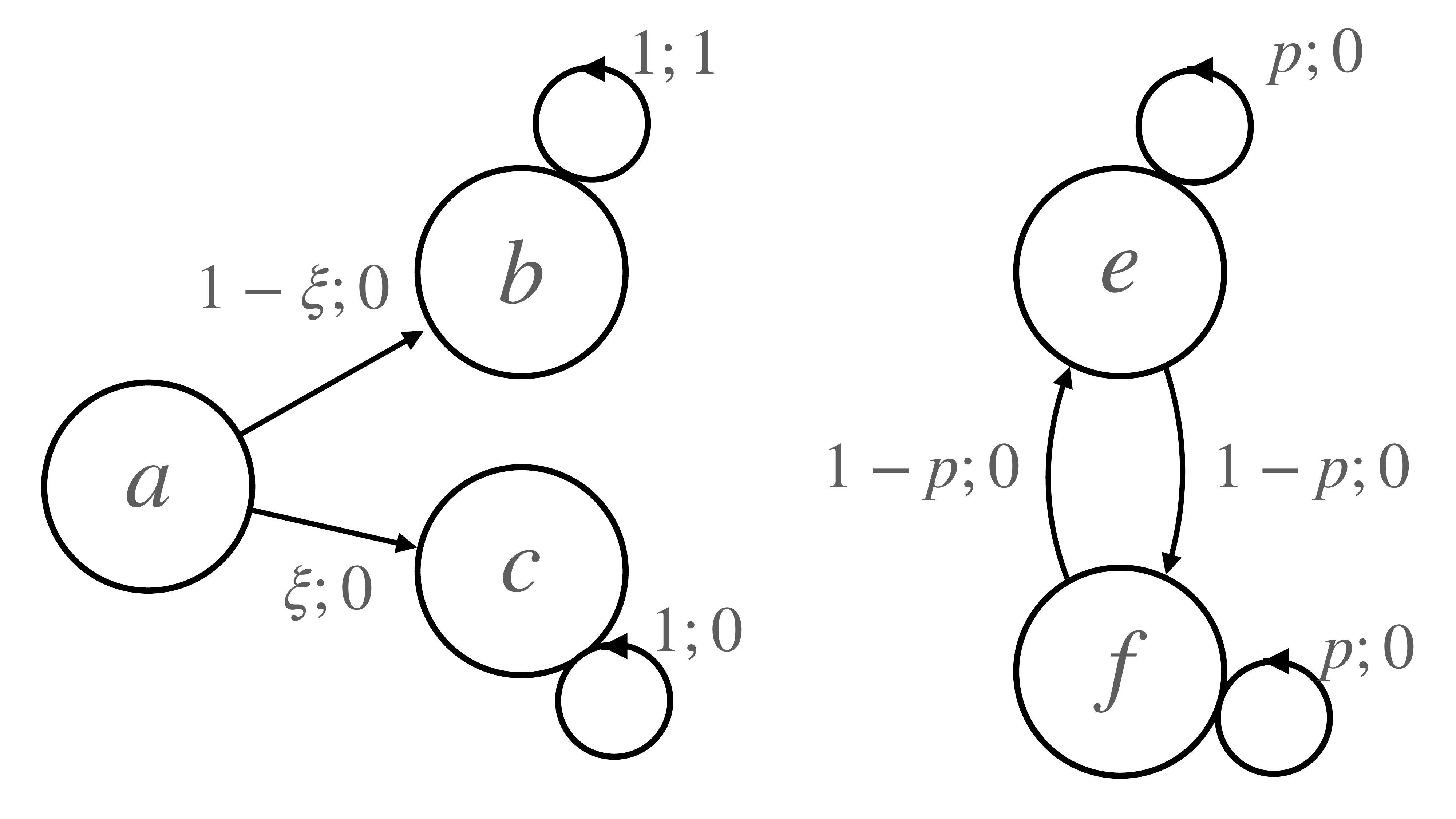}
    \caption{Transition for action $a_2$}
    \label{fig:rmdp_a2_dummy}
    \end{subfigure}
    \end{center}
\caption{Transitions and rewards for the RMDP instance for the third statement in Proposition \ref{prop:mixing r-rec s-rec part 1}.}\label{fig:mixing r-rec s-rec part 1}
\end{figure}
\item Note that $\cP_{2}$ is the image of of $\cW$ by an affine map and $\cP = \cP_{1} \times \cP_{2}$. Therefore $\cP$ is convex compact if $\cW$ and $\cP_{1}$ are convex compact.
\end{enumerate}
\hfill \Halmos
\end{proof}

We now show that this new model of uncertainty satisfies the weak SSP.
\begin{proposition}\label{prop:part 1 weak ssp}
    Let $\cP$ be an uncertainty set that can be written as in \eqref{eq:mixing r-rec s-rec part 1} with $\cP_{1}$ and $\cW$ compact sets. Then the weak SSP \eqref{eq:adv ssp - rsa - srec} holds.
\end{proposition}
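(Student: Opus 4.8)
The plan is to exploit the Cartesian product structure $\cP = \cP_{1} \times \cP_{2}$ together with the \emph{block-locality} of the objective appearing in the weak SSP: for any state $s$, the map $\bm{P} \mapsto \langle \bm{P}_{s},\bm{\pi}_{s}\bm{V}\tr \rangle = \sum_{a \in \A}\pi_{sa}\bm{P}_{sa}\tr\bm{V}$ depends only on the transition block out of $s$, which is governed by the $\cP_{1}$-coordinate of $\bm{P}$ when $s \in \X_{1}$ and by the $\cP_{2}$-coordinate when $s \in \X_{2}$. Fix $(\pi,\bm{V}) \in \PiS \times \R^{\X}$; I will construct a single $\bm{P}\opt \in \cP$ lying in $\arg\min_{\bm{P} \in \cP}\langle \bm{P}_{s},\bm{\pi}_{s}\bm{V}\tr\rangle$ for \emph{every} $s \in \X$, which is exactly what \eqref{eq:adv ssp - rsa - srec} requires.

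First I would handle the states in $\X_{1}$ via the s-rectangular argument already used in this section. Since $\cP_{1} = \times_{s \in \X_{1}}\cP_{s}$ with each $\cP_{s} \subset \Delta(\X)^{\A}$ compact, continuity and compactness yield, for each $s \in \X_{1}$, a block $\bm{q}_{s} \in \arg\min_{\bm{q} \in \cP_{s}} \sum_{a \in \A}\pi_{sa}\bm{q}_{a}\tr\bm{V}$; by s-rectangularity the concatenation $\bm{P}^{(1)} := (\bm{q}_{s})_{s \in \X_{1}}$ is feasible in $\cP_{1}$ and simultaneously minimizes $\langle (\cdot)_{s},\bm{\pi}_{s}\bm{V}\tr\rangle$ over $\cP_{1}$ for all $s \in \X_{1}$.

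Next I would handle the states in $\X_{2}$ via the r-rectangular computation already carried out earlier in this section: by compactness of $\cW$, pick $\bm{w}^{i\star} \in \arg\min_{\bm{w}^{i} \in \cW^{i}} \langle \bm{w}^{i},\bm{V}\rangle$ for each $i \in [r]$, and set $\bm{P}^{(2)} := \big( \sum_{i=1}^{r} u^{i}_{sa}\bm{w}^{i\star} \big)_{s \in \X_{2}, a \in \A} \in \cP_{2}$. Because $\langle \bm{P}^{(2)}_{s},\bm{\pi}_{s}\bm{V}\tr\rangle = \sum_{a \in \A}\pi_{sa}\sum_{i=1}^{r} u^{i}_{sa}\langle \bm{w}^{i\star},\bm{V}\rangle$ and the inner minimization over each $\bm{w}^{i} \in \cW^{i}$ is the same for every $s$, the point $\bm{P}^{(2)}$ simultaneously minimizes $\langle (\cdot)_{s},\bm{\pi}_{s}\bm{V}\tr\rangle$ over $\cP_{2}$ for all $s \in \X_{2}$.

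Finally I would glue these together: set $\bm{P}\opt := (\bm{P}^{(1)},\bm{P}^{(2)}) \in \cP_{1} \times \cP_{2} = \cP$. For $s \in \X_{1}$, block-locality of the objective together with $\cP = \cP_{1} \times \cP_{2}$ give the decoupling identity $\min_{\bm{P} \in \cP}\langle \bm{P}_{s},\bm{\pi}_{s}\bm{V}\tr\rangle = \min_{\bm{Q} \in \cP_{1}}\langle \bm{Q}_{s},\bm{\pi}_{s}\bm{V}\tr\rangle$, whose value is attained by $\bm{P}^{(1)}$, so $\bm{P}\opt \in \arg\min_{\bm{P} \in \cP}\langle \bm{P}_{s},\bm{\pi}_{s}\bm{V}\tr\rangle$; the symmetric statement with $\cP_{2}$ handles $s \in \X_{2}$. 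Hence $\bm{P}\opt \in \cap_{s \in \X}\arg\min_{\bm{P} \in \cP}\langle \bm{P}_{s},\bm{\pi}_{s}\bm{V}\tr\rangle$, so this intersection is nonempty and \eqref{eq:adv ssp - rsa - srec} holds. The only mildly delicate point is the decoupling identity, which however is immediate from the product form of $\cP$ and the fact that the state-$s$ objective ignores every coordinate of $\bm{P}$ outside the block at $s$; everything else is verbatim the s-rectangular and r-rectangular arguments already established. I therefore expect no genuine obstacle: the proposition is essentially the statement that the weak SSP is preserved when the state set is split into groups of states on which the transitions can be chosen independently, here one s-rectangular group and one r-rectangular group.
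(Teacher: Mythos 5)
Your proposal is correct and follows essentially the same route as the paper's proof: per-state minimizer blocks for $\X_{1}$ glued by s-rectangularity of $\cP_{1}$, the common factor minimizers $\bm{w}^{i\star}$ for $\X_{2}$, and concatenation via $\cP = \cP_{1} \times \cP_{2}$. The decoupling identity you flag as the only delicate point is exactly the observation the paper relies on implicitly, so there is nothing further to add.
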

\begin{proof}{Proof of Proposition \ref{prop:part 1 weak ssp}.}
Let $\pi \in \PiS$ and $\bm{V} \in \R^{\X}$.
For each $s \in \X_{1}$, the transitions $\bm{P}_{s}$ can be chosen independently from the transitions out of every other state, and therefore there exists some $\bm{P}_{\X_{1}} \in \cP_{1}$ such that
\[\bm{P}_{\X_{1}} \in \cap_{s \in \X_{1}} \arg \min_{\bm{P} \in \cP}  \; \langle \bm{P}_{s},\bm{\pi}_{s}\bm{V}\tr \rangle. \]
For all $s \in \X_{2}$, the situation is similar to the case of r-rectangular uncertainty sets: if we choose $\bm{w}^{i \star}$ in $\arg \min_{\bm{w}^{i} \in \cW^{i}} \bm{w}^{i \; \top} \bm{V}$ for each $i \in [r]$ then $\bm{P}_{\X_{2}} = \left(\sum_{i=1}^{r}u^{i}_{sa}\bm{w}^{i \star}\right)_{s \in \X_{2},a\in \A}$ is feasible in $\cP_{2}$ and
\[ \bm{P}_{\X_{2}} \in \cap_{s \in \X_{2}} \arg \min_{\bm{P}_{2} \in \cP_{2}} \langle \bm{P}_{2,s}, \bm{\pi}_s\bm{V}\tr \rangle.\]
Since $\cP = \cP_{1} \times \cP_{2}$, we conclude that $\left(\bm{P}_{\X_{1}},\bm{P}_{\X_{2}}\right) \in \cP$ and 
\[ \left(\bm{P}_{\X_{1}},\bm{P}_{\X_{2}}\right) \in \cap_{s \in \X} \arg \min_{\bm{P} \in \cP} \langle \bm{P}_s, \bm{\pi}_s\bm{V}\tr \rangle.\]
\hfill \Halmos
\end{proof}

\paragraph{Generalizing s-rectangularity and r-rectangularity: part 2.}
We now present a second generalization of s-rectangular and r-rectangular uncertainty, which corresponds to sets where we can write $\bm{P}_{sa} = \sum_{i=1}^{r} u^{i}_{sa} \bm{w}^{i}$ but where both the factors $\bm{w}_{1},...,\bm{w}^{r}$ {\em and} the coefficients $\bm{u}_s=\left(u^{i}_{sa}\right)_{ia}$ for $s \in \X$ may vary and belong to sets that are Cartesian product:
\begin{equation}\label{eq:mixing r-rec s-rec part 2}
    \begin{aligned}
    \mathcal{P} &  = \left\{ \left.\left(\sum_{i=1}^r u^i_{sa}\bm{w}_{i} \right)_{sa} \right|(\bm{w}^{1},\ldots,\bm{w}^{r}) \in \cW,(\bm{u}_1,\ldots,\bm{u}_S)\in \mathcal{U} \right\},\\
    \cW & = \times_{i \in [r]} \cW^{i}, \cW^{i} \subset \Delta(\X), \\
    \cU & = \times_{s \in \X} \cU^{s}, \cU^{s} \subset \Delta([r])^{\A}.
    \end{aligned}
\end{equation}
To provide some more intuition on this model of uncertainty, we start with two simple examples.
\begin{example}\label{ex:part 2 generalization 1}
Consider a robust MDP instance with two states: $\X=\{a,b\}$ and a single action: $\A = \{a_{1}\}$. The uncertainty set is given by
    \[\cP = \left\{\left(\bm{P}_{aa_{1}},\bm{P}_{ba_{1}}\right) \; | \; \bm{P}_{aa_{1}} = \bm{w}^{1},\bm{P}_{ba_{1}} = \xi \bm{w}^{1} + (1-\xi)\bm{w}^{2}, \xi \in [0,1], \bm{w}^{1},\bm{w}^{2}\in \Delta(\X)\right\}.\]
Then we can write the uncertainty set $\cP$ following \eqref{eq:mixing r-rec s-rec part 2}, with two factors $\bm{w}^{1},\bm{w}^{2}$, with a singleton for $\cU^{1}:=\{(1,0)\}$ and where $\cU^{2}$ is parametrized by $\xi \in [0,1]: \cU^{2} = \{(\xi,1-\xi) \; | \; \xi \in [0,1]\}$.
\end{example}
\begin{example}\label{ex:part 2 generalization 2}
We consider the robust MDP instance and uncertainty set described in Figure \ref{fig:mixing r-rec s-rec part 2} with 4 states: $\X=\{a,b,c,d\}$ and three actions: $\A=\{a_{1},a_{2},a_{3}\}$. The uncertainty set $\cP$ is parametrized by two parameters $\xi \in [0,1]$ and $p \in [0,1]$. We indicate the transitions and rewards above the solid arcs for the three actions. Note that we do not need the rewards in the definition of the uncertainty set $\cP$, but we introduce them here because we will use this robust MDP instance in our proof of Proposition \ref{prop:mixing part 2 s-rec r-rec}.
Note that only two states have uncertain transitions, State $a$ and State $d$. We can write the uncertainty set (restricted to the transitions out of State $a$ and State $d$) as follows:
    \begin{align*}
       \cP =  \{&\bm{P}_{aa_{1}} = \xi\bm{w}^{1} + (1-\xi)\bm{w}^{2}, \bm{P}_{aa_{2}} = (1-\xi)\bm{w}^{1} + \xi\bm{w}^{2}, \bm{P}_{aa_{3}} = \bm{w}^3, \bm{P}_{da_{3}} = \bm{w}^{3} \; | \\ & \; \bm{w}^{1} \in \{\bm{e}_{b}\}, \bm{w}^{2} \in \{\bm{e}_{c}\},\bm{w}^{3} = (p,0,0,1-p), p \in [0,1], \xi \in [0,1]\}
    \end{align*}
with $\bm{e}_{b}$ and $\bm{e}_{c}$ the vectors of the canonical basis indexed by $\X$: $\bm{e}_{b} = (0,1,0,0)$ and $\bm{e}_{c} = (0,0,1,0)$.
Note that the factors $\bm{w}^{1}$ and $\bm{w}^{3}$ involved in the transitions out of State $a$ for actions $a_{1}$ and $a_{3}$ can take only a single value, and the coefficients involved in the transitions out of State $a$ may vary and are parameterized by $\xi \in [0,1]$. In contrast, for the transitions out of State $a$ and State $d$ for action $a_{3}$, the coefficients are fixed (equal to $1$ for $i=3$), and the factor $\bm{w}^{3}$ may vary. Therefore, in this RMDP instance, for every term $u^{i}_{sa}\bm{w}^{i}$ appearing in the description of $\cP$, either the coefficient $u^{i}_{sa}$ is fixed or the factor $\bm{w}^{i}$ is fixed, avoiding the potential bilinear terms appearing in the general description of the model~\eqref{eq:mixing r-rec s-rec part 2}. We will discuss this specific structure more at length later in this section.
\begin{figure}
\begin{center}
    \begin{subfigure}{0.3\textwidth}
\includegraphics[width=\linewidth]{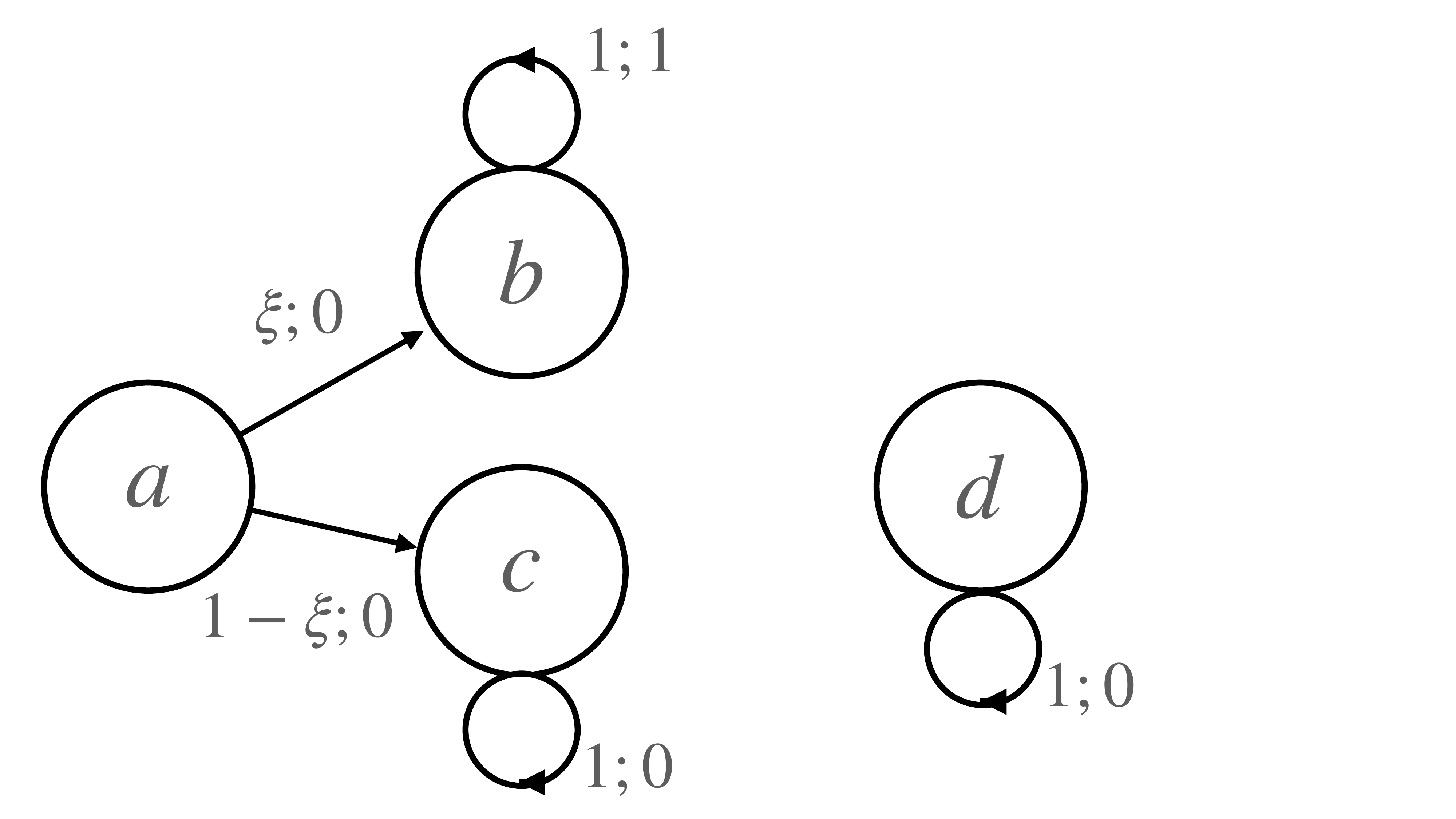}
    \caption{Transition for action $a_1$}
    \label{fig:rmdp_a1_part 2}
    \end{subfigure}
    \begin{subfigure}{0.3\textwidth}
\includegraphics[width=\linewidth]{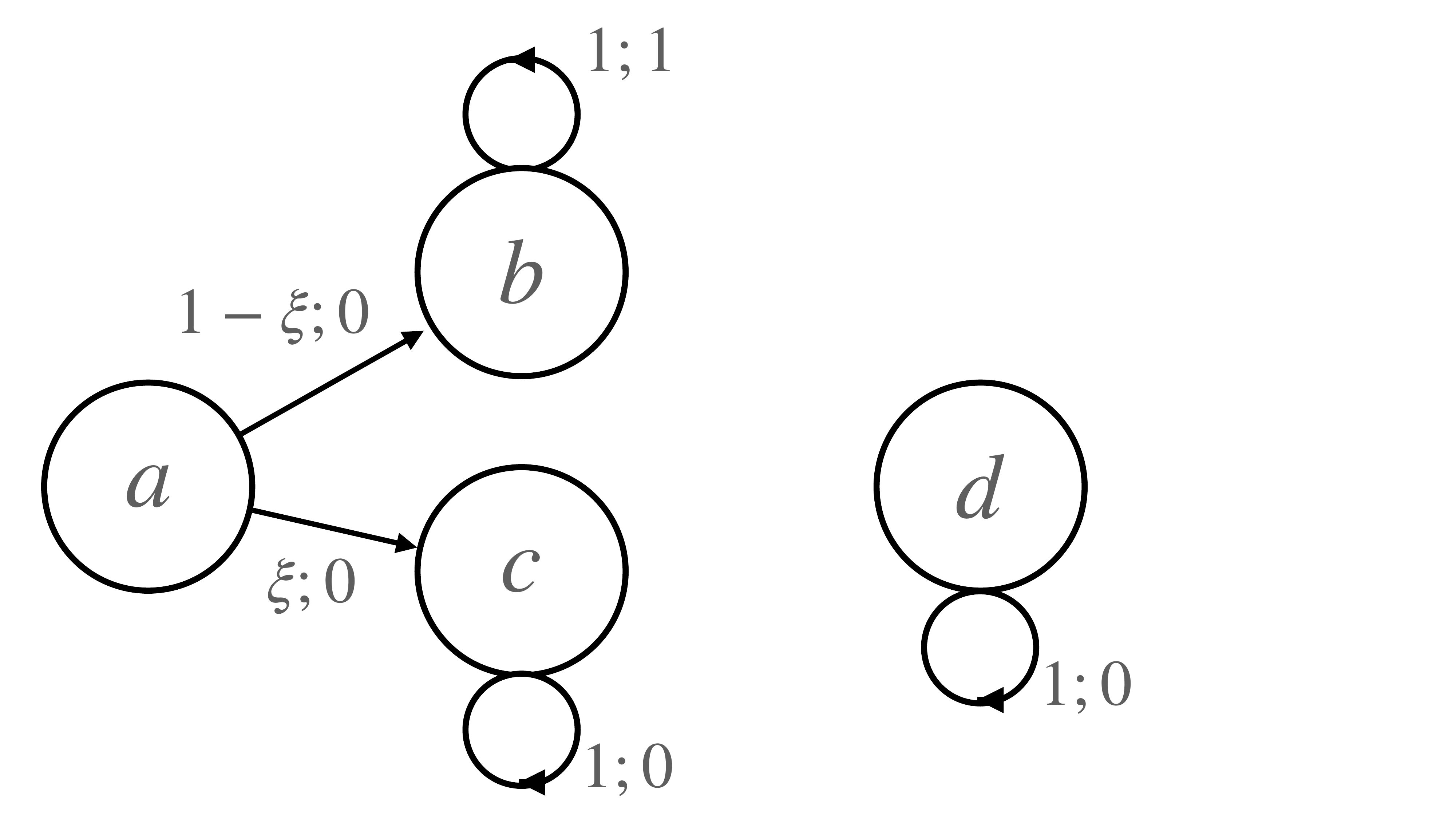}
    \caption{Transition for action $a_2$}
    \label{fig:rmdp_a2_part 2}
    \end{subfigure}
    \begin{subfigure}{0.3\textwidth}
\includegraphics[width=\linewidth]{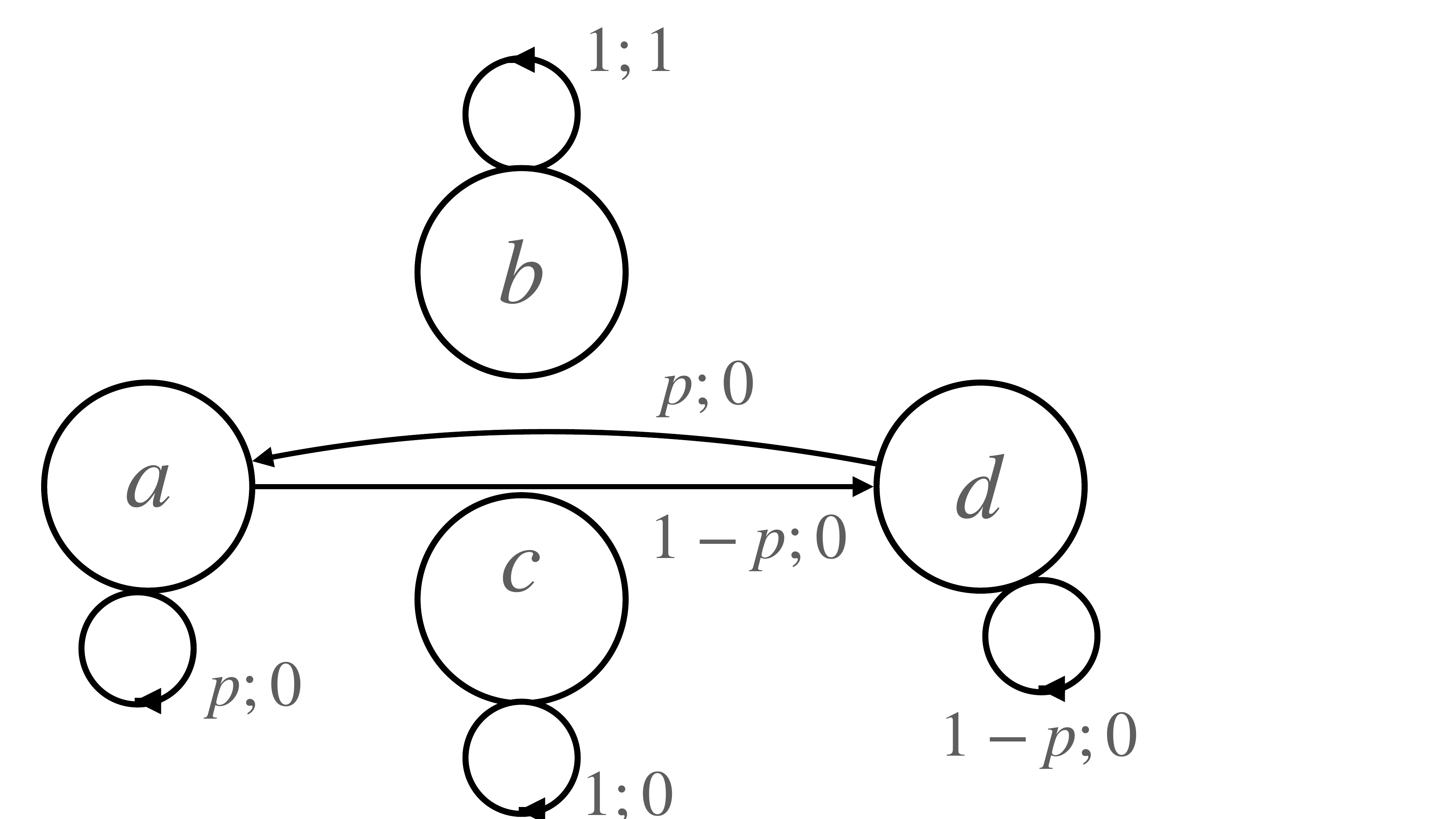}
    \caption{Transition for action $a_3$}
    \label{fig:rmdp_a3_part 2}
    \end{subfigure}
    \end{center}
\caption{Transitions and rewards for the RMDP instance for Example \ref{ex:part 2 generalization 2}. }\label{fig:mixing r-rec s-rec part 2}
\end{figure}
\end{example}
We start with the following simple proposition, which shows the modeling power of this uncertainty sets as in \eqref{eq:mixing r-rec s-rec part 2}, strictly generalizing the existing models.
\begin{proposition}\label{prop:mixing part 2 s-rec r-rec}
    \begin{enumerate}
        \item Any s-rectangular uncertainty set can be written as in \eqref{eq:mixing r-rec s-rec part 2}.
        \item Any r-rectangular uncertainty set can be written as in \eqref{eq:mixing r-rec s-rec part 2}. 
        \item Any uncertainty set that can be written as in \eqref{eq:mixing r-rec s-rec part 1} can also be written as in \eqref{eq:mixing r-rec s-rec part 2}.
        \item There exists and uncertainty set that can be written as in \eqref{eq:mixing r-rec s-rec part 2} and that cannot be written as in \eqref{eq:mixing r-rec s-rec part 1}.
    \end{enumerate}
\end{proposition}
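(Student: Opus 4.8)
The plan is to prove the four statements in order; the first three are explicit reparametrizations and the fourth is the only place where genuine work is needed. For statement~1, given an s-rectangular $\cP=\times_{s\in\X}\cP_s$, I would take $r=|\X|$, make every factor set a singleton canonical vector $\cW^i=\{\bm{e}_{i}\}$, and set $\cU^s:=\cP_s$ under the identification $\Delta([r])=\Delta(\X)$; then $\sum_{i}u^i_{sa}\bm{w}^i=\bm{u}_{sa}$, so \eqref{eq:mixing r-rec s-rec part 2} reproduces $\times_s\cP_s$, and both $\cW=\times_i\cW^i$ and $\cU=\times_s\cU^s$ are Cartesian products as required. For statement~2, given an r-rectangular $\cP$ with fixed coefficients $\bm{u}_{sa}$ and factor sets $\cW^1,\dots,\cW^r$, I would keep the same factors and set $\cU^s:=\{\bm{u}_s\}$, the singleton holding the fixed coefficient matrix of state $s$; then $\cU=\times_s\{\bm{u}_s\}$ is a (trivial) Cartesian product and \eqref{eq:mixing r-rec s-rec part 2} reproduces $\cP$.

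For statement~3, given $\cP=\cP_1\times\cP_2$ as in \eqref{eq:mixing r-rec s-rec part 1}, with $\cP_2$ built from factor sets $\cW^1,\dots,\cW^r$ and fixed coefficients on $\X_2$, I would use $r'=r+|\X|$ factors by appending the $|\X|$ singleton canonical factors $\{\bm{e}_{1}\},\dots,\{\bm{e}_{|\X|}\}$. For $s\in\X_2$ I set $\cU^s$ to be the singleton holding the original coefficient matrix, zero-padded on the new components; for $s\in\X_1$ I set $\cU^s:=\{\,\big((0,\dots,0,(\bm{p}_a)_1,\dots,(\bm{p}_a)_{|\X|})\big)_{a\in\A}\;:\;(\bm{p}_a)_{a}\in\cP_s\,\}$, which lies in $\Delta([r'])^\A$ since each $\bm{p}_a\in\Delta(\X)$. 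Then $\bm{P}_{sa}=\bm{p}_a$ is free in $\cP_s$ for $s\in\X_1$, while $\bm{P}_{sa}=\sum_{i\le r}u^i_{sa}\bm{w}^i$ for $s\in\X_2$; because $\cU=\times_s\cU^s$ and $\cW=\times_i\cW^i$ are Cartesian products and the $\X_1$-block uses only the fixed appended factors, \eqref{eq:mixing r-rec s-rec part 2} reconstructs exactly $\cP_1\times\cP_2$.

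Statement~4 is the main obstacle. I would take $\cP$ to be the uncertainty set of Example~\ref{ex:part 2 generalization 2} and Figure~\ref{fig:mixing r-rec s-rec part 2}, which is exhibited there as an instance of \eqref{eq:mixing r-rec s-rec part 2} and which is convex and compact (every term $u^i_{sa}\bm{w}^i$ in its description has either $u^i_{sa}$ or $\bm{w}^i$ fixed, so $\cP$ is the image of $(\xi,p)\in[0,1]^2$ under an affine map). It then remains to rule out a representation as in \eqref{eq:mixing r-rec s-rec part 1}. Suppose $\cP=\cP_1\times\cP_2$ with $\X=\X_1\cup\X_2$, $\cP_1=\times_{s\in\X_1}\cP_s$ and $\cP_2$ r-rectangular on $\X_2$. \emph{Step (i):} the two states with nondegenerate transitions, State $a$ and State $d$, must both lie in $\X_2$, since in $\cP$ one always has $\bm{P}_{aa_3}=\bm{P}_{da_3}=(p,0,0,1-p)$ with $p$ ranging over $[0,1]$, so if $a\in\X_1$ (resp.\ $d\in\X_1$) the transitions out of that state would range over its marginal independently of the other, producing a point of $\cP$ with $\bm{P}_{aa_3}\neq\bm{P}_{da_3}$, a contradiction. \emph{Step (ii):} the remaining states $b,c$ have deterministic transitions, so their marginals are singletons, hence $\cP_1$ is a singleton over $\X_1$ (in particular sa-rectangular, so r-rectangular by Proposition~2.1 in \cite{goyal2022robust}); then $\cP=\cP_1\times\cP_2$ is a product of two r-rectangular uncertainty sets over disjoint state sets, which is again r-rectangular after concatenating the two factor families and zero-padding each state's coefficients on the other block. \emph{Step (iii):} thus $\cP$ would be a convex compact r-rectangular uncertainty set, so by \cite{goyal2022robust} the robust MDP of Example~\ref{ex:part 2 generalization 2} — whose rewards may be taken to satisfy Assumption~\ref{assumption:rewards} — would admit a stationary deterministic optimal policy; but by the same reasoning as for the third statement of Proposition~\ref{prop:mixing r-rec s-rec part 1} (Proposition~1 in \cite{wiesemann2013robust} applied to this instance) every optimal policy must randomize at State $a$, a contradiction. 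Hence $\cP$ is expressible as in \eqref{eq:mixing r-rec s-rec part 2} but not as in \eqref{eq:mixing r-rec s-rec part 1}.

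The delicate points are all confined to statement~4: checking in Step~(i) that the shared factor $\bm{w}^3$ genuinely forces $a,d\in\X_2$, and verifying in Step~(ii) that a product of r-rectangular sets on disjoint state subsets is itself r-rectangular; once these are secured, the randomization-versus-determinism clash in Step~(iii) closes the argument exactly as in the earlier proposition.
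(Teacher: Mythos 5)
Your proposal is correct and follows essentially the same route as the paper: statements 1--3 use the identical reparametrizations (canonical-basis singleton factors with $\cU^{s}=\cP_{s}$ for s-rectangularity, singleton coefficient sets for r-rectangularity, and their combination for sets of the form \eqref{eq:mixing r-rec s-rec part 1}), and statement 4 uses the same instance from Example \ref{ex:part 2 generalization 2} together with the same two ingredients — the coupling $\bm{P}_{aa_{3}}=\bm{P}_{da_{3}}$ to rule out any partition placing $a$ or $d$ in the s-rectangular block, and the randomization-versus-determinism clash from \cite{wiesemann2013robust} and \cite{goyal2022robust} to rule out the r-rectangular case. Your sequencing of statement 4 (first forcing $a,d\in\X_{2}$, then observing the product with the deterministic singleton block is again r-rectangular) is slightly more explicit than the paper's case split, but it is the same argument.
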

\begin{proof}{Proof of Proposition \ref{prop:mixing part 2 s-rec r-rec}.}
    \begin{enumerate}
        \item Let $\cP$ be s-rectangular: $\cP = \times_{s \in \X} \cP_{s}$. Let $r = |\X|$. We identity the set $[|\X|]=\{1,...,\X\}$ and the set $\X$ in this proof. For $i \in [r] = [|\X|] = \X$, we define $\cW^{i}$ as the singleton containing only the $i$-th vector of the canonical basis of $\R^{\X}$: $\cW^{i} = \{ \bm{e}_i\}$ with $e_{ij}=0$ if $j \neq i$ and $e_{ii}=1$. Let $\bm{P} \in \cP$. By definition, $\bm{P}_{sa} = \sum_{s' \in \X} \left(\bm{P}_{sa}\tr\bm{e}_{s'}\right)\bm{e}_{s'} = \sum_{i \in [r]} \left( \bm{P}_{sa}\tr\bm{e}_{i} \right) \bm{e}_{i}$. If we define $\cU^{s} = \{\left(\left( \bm{P}_{sa}\tr\bm{e}_{i} \right)\right)_{ia} \; | \; \bm{P}_{s} \in \cP_{s}\} = \cP_{s}$ for each $s \in \X$, we indeed obtain that $\cP$ can be written as in \eqref{eq:mixing r-rec s-rec part 2}.
        \item Let $\cP$ be r-rectangular. Then $\cP$ can be written as \eqref{eq:mixing r-rec s-rec part 2} with $\cU$ a singleton: $\cU = \{\left(u^{i}_{sa}\right)_{isa}\}$.
        \item Let $\cP$ an uncertainty set that can be written as in \eqref{eq:mixing r-rec s-rec part 1}. Then we can partition the set of states into a first set of states $\X_{1}$ out of which the transitions are s-rectangular, and a second set of states $\X_{2}$ out of which the transitions are r-rectangular. Combining the proofs of the first statement of this proposition (for transitions out of $\X_{1}$) and of the second statement of this proposition (for transitions out of $\X_{2}$) concludes the proof of this statement.
        \item We consider the uncertainty set described in Example \ref{ex:part 2 generalization 2}.
    If the uncertainty set $\cP$ could be written as in \eqref{eq:mixing r-rec s-rec part 1} then we could partition $\{a,d\}$ into two subsets $\X_{1}$ and $\X_{2}$ such that the transitions out of $\X_{1}$ are s-rectangular and the transitions out of $\X_{2}$ are r-rectangular. Since $\{a,d\}$ has only two elements, this would mean that $\cP$ is s-rectangular or that $\cP$ is r-rectangular. However, neither of these two cases is possible.
    Indeed, $\cP$ is not r-rectangular since \cite{wiesemann2013robust} proved that an optimal robust policy must be randomized if starting in State $a$ (whereas optimal robust policies for r-rectangular sets can always be chosen deterministic). 
    Additionally, $\cP$ cannot be s-rectangular because the transitions out of the state-action pairs $(a,a_{3})$ and $(d,a_{3})$ must be equal.
    \end{enumerate}
    \hfill \Halmos
\end{proof}
Importantly, we note that sets as in \eqref{eq:mixing r-rec s-rec part 2} are not necessarily convex even when the sets $\cU$ and $\cW$ are convex, because of the bilinear term $u^{i}_{sa}\bm{w}^{i}$ in their definition. 
However, an interesting special case has been highlighted in Example \ref{ex:part 2 generalization 2}, corresponding to the case where for any triplet $(i,s,a) \in [r] \times \X \times \A$, either $u^{i}_{sa}$ is fixed, or $\bm{w}^{i}$ is fixed (it belongs to a set that is a singleton). In this case, there are no bilinear terms in the inequalities defining the uncertainty set, the transitions probabilities in $\cP$ are the images of elements from $\cU$ and $\cW$ by affine maps, and $\cP$ is convex when $\cU$ and $\cW$ are convex. Intuitively, we can interpret this case as partitioning the set of indices $\cI:=\{1,...,r\}$ between the indices for which the factors are fixed (a subset $\cI_{\sf fix} \subset \{1,...,r\}$) and the indices for which the factors may vary (the subset $\cI_{\sf var}=\cI \setminus \cI_{\sf fix}$), but then the coefficients $u^{i}_{sa}$ are fixed if $i \in \cI_{\sf var}$. We can then write each
transition $\bm{P}_{sa} = \sum_{i=1}^{r} u^{i}_{sa}\bm{w}^{i}$ as
\begin{equation}\label{eq:psa decomposition - part 2}
    \bm{P}_{sa} = \sum_{i \in \cI_{\sf fix}} u^{i}_{sa}\bm{w}^{i} + \sum_{i \in \cI_{\sf var}} u^{i}_{sa}\bm{w}^{i}.
\end{equation}
In the first summation in~\eqref{eq:psa decomposition - part 2}, each factor $\bm{w}^{i}$ is fixed, i.e. it belongs to a singleton set ($\bm{w}^{i} \in \cW^{i},\card(\cW^{i})=1$) but the coefficients $u^{i}_{sa}$ may take different values consistent with the feasible set $\cU^{s}$ as in the formulation~\eqref{eq:mixing r-rec s-rec part 2}, a situation akin to s-rectangular models. In the second summation in~\eqref{eq:psa decomposition - part 2}, the coefficients are fixed, but the factor $\bm{w}^{i}$ may take different values within $\cW^{i}$, a situation akin to r-rectangular models. Therefore, we can interpret this special case as splitting each possible transition probabilities between an s-rectangular part (the first summation in~\eqref{eq:psa decomposition - part 2}) and an r-rectangular part (the second summation in~\eqref{eq:psa decomposition - part 2}).

We conclude our analysis of the model~\eqref{eq:mixing r-rec s-rec part 2} by showing that it satisfies the weak SSP.
\begin{proposition}\label{prop:part 2 weak ssp}
        \item Let $\cP$ be an uncertainty set that can be written as in \eqref{eq:mixing r-rec s-rec part 2} for some compact sets $\cW^{1},...,\cW^{r}$ and $\cU^{s}$ for $s \in \X$. Then the weak SSP~\eqref{eq:adv ssp - rsa - srec} holds.
\end{proposition}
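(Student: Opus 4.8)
The plan is to produce, for each fixed $\pi\in\PiS$ and $\bm V\in\R^{\X}$, a single feasible $\bm P\opt\in\cP$ that belongs to $\arg\min_{\bm P\in\cP}\langle \bm P_{s},\bm\pi_{s}\bm V\tr\rangle$ for \emph{every} state $s\in\X$ at once, which is exactly the weak SSP~\eqref{eq:adv ssp - rsa - srec}. The construction decouples the two freedoms in model~\eqref{eq:mixing r-rec s-rec part 2}: the factors $(\bm w^{1},\ldots,\bm w^{r})$ are chosen once and for all (globally over states), and only then are the coefficients $(\bm u_{1},\ldots,\bm u_{S})$ optimized state by state, which is possible because $\cU=\times_{s\in\X}\cU^{s}$ is a Cartesian product.

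First I would rewrite the $s$-th objective. For any $\bm P=(\sum_{i}u^{i}_{sa}\bm w^{i})_{sa}\in\cP$ one has $\langle \bm P_{s},\bm\pi_{s}\bm V\tr\rangle=\sum_{a\in\A}\pi_{sa}\sum_{i=1}^{r}u^{i}_{sa}\,\bm w^{i\top}\bm V=\sum_{i=1}^{r}(\bm w^{i\top}\bm V)\big(\sum_{a\in\A}\pi_{sa}u^{i}_{sa}\big)$, so it depends on $\bm w$ only through the scalars $c_{i}:=\bm w^{i\top}\bm V$ and on $\bm u$ only through $\bm u_{s}$, and each ``weight'' $\sum_{a}\pi_{sa}u^{i}_{sa}$ is nonnegative. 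Then, using compactness of each $\cW^{i}$, I would pick $\bm w^{i\star}\in\arg\min_{\bm w^{i}\in\cW^{i}}\bm w^{i\top}\bm V$ and set $c_{i}^{\star}:=\bm w^{i\star\top}\bm V\le \bm w^{i\top}\bm V$ for all $\bm w^{i}\in\cW^{i}$. Using compactness of each $\cU^{s}$, I would then pick, separately for each $s$, $\bm u_{s}^{\star}\in\arg\min_{\bm u_{s}\in\cU^{s}}\sum_{i}c_{i}^{\star}\sum_{a}\pi_{sa}u^{i}_{sa}$, and set $\bm P\opt:=\big(\sum_{i}u^{i\star}_{sa}\bm w^{i\star}\big)_{sa}$, which lies in $\cP$ since $(\bm w^{1\star},\ldots,\bm w^{r\star})\in\cW$ and $(\bm u_{1}^{\star},\ldots,\bm u_{S}^{\star})\in\cU$.

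The verification is the short chain of inequalities: for arbitrary $\bm P\in\cP$ and arbitrary $s\in\X$, $\langle \bm P_{s},\bm\pi_{s}\bm V\tr\rangle=\sum_{i}(\bm w^{i\top}\bm V)\sum_{a}\pi_{sa}u^{i}_{sa}\ \ge\ \sum_{i}c_{i}^{\star}\sum_{a}\pi_{sa}u^{i}_{sa}\ \ge\ \sum_{i}c_{i}^{\star}\sum_{a}\pi_{sa}u^{i\star}_{sa}=\langle \bm P\opt_{s},\bm\pi_{s}\bm V\tr\rangle$, where the first inequality uses $\bm w^{i\top}\bm V\ge c_{i}^{\star}$ together with nonnegativity of the weights, and the second uses the definition of $\bm u_{s}^{\star}$ together with $\bm u_{s}\in\cU^{s}$. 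Hence $\bm P\opt\in\arg\min_{\bm P\in\cP}\langle \bm P_{s},\bm\pi_{s}\bm V\tr\rangle$ for every $s$, proving~\eqref{eq:adv ssp - rsa - srec}. The conceptual crux — and the only step requiring care — is recognizing that the factors admit a \emph{state-independent} optimal choice precisely because each $\bm w^{i}$ enters the $s$-th objective multiplied by the nonnegative quantity $\sum_{a}\pi_{sa}u^{i}_{sa}$; everything else then follows from the product structure of $\cU$ and compactness, with no convexity needed, consistent with the statement. This argument simultaneously generalizes the ones already given for r-rectangular sets (take $\cU$ a singleton) and s-rectangular sets (take the $\bm w^{i}$ to be canonical basis vectors and $\cU=\times_{s}\cP_{s}$).
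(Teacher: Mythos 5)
Your proposal is correct and follows essentially the same route as the paper's proof: choose each factor $\bm{w}^{i\star}$ globally as a minimizer of $\bm{w}^{i\top}\bm{V}$ (possible because this objective is independent of $s$), then optimize the coefficients $\bm{u}_{s}^{\star}$ separately for each state using the product structure $\cU=\times_{s}\cU^{s}$, and assemble $\bm{P}\opt$. Your version is marginally more explicit in flagging that the substitution of $\min_{\bm{w}^{i}}\bm{w}^{i\top}\bm{V}$ into the objective is licensed by the nonnegativity of the weights $\sum_{a}\pi_{sa}u^{i}_{sa}$, a point the paper leaves implicit in its chain of equalities.
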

\begin{proof}{Proof of Proposition \ref{prop:part 2 weak ssp}.}
Let $s \in \X, \pi \in \PiS,\bm{V} \in \R^{\X}$. We have
    \begin{align*}
    \min_{\bm{P} \in \cP} \langle \bm{P}_{s},\bm{\pi}_{s}\bm{V}\tr  \rangle
    & = \min_{\bm{P} \in \cP} \sum_{a \in \A} \pi_{sa} \sum_{i=1}^{r}\bm{u}_{sa}^{i} \bm{w}^{i \top}\bm{V}  \\
    & = \min_{\bm{u}_{s} \in \cU^{s},(\bm{w}^{1},...,\bm{w}^{r}) \in \cW^{1} \times ... \times \cW^{r}} \sum_{a \in \A} \pi_{sa}\sum_{i=1}^{r}\bm{u}_{sa}^{i \top} (\bm{w}^{i \top}\bm{V})_{i \in [r]} \\
    & = \min_{\bm{u}_{s} \in \cU^{s}} \sum_{a \in \A} \pi_{sa}\sum_{i=1}^{r}\bm{u}_{sa}^{i \top} \left( \min_{\bm{w}^{i} \in \cW^{i}}\bm{w}^{i \top}\bm{V} \right)_{i \in [r]}
\end{align*}
Note that the inner optimization problem $\min_{\bm{w}^{i} \in \cW^{i}}\bm{w}^{i \top}\bm{V}$ is independent of $s \in \X$ and $\bm{\pi}_s$, and it appears in each of the optimization problem $\min_{\bm{P} \in \cP} \langle \bm{P}_{s},\bm{\pi}_{s}\bm{V}\tr  \rangle$ across each $s \in \X$. Therefore, if we pick $\bm{w}^{i \star} \in \arg \min_{\bm{w}^{i} \in \cW^{i}}\bm{w}^{i \top}\bm{V}$ for each $i \in [r]$ and then, for each $s \in \X$,
\[ \bm{u}_{s}\opt \in \arg \min_{\bm{u}_{s} \in \cU^{s}} \sum_{a \in \A} \pi_{sa}\bm{u}_{sa}\tr \left( \bm{w}^{i \star \top}\bm{V} \right)_{i \in [r]}\]
we obtain that $\bm{P}\opt:= \left(\sum_{i=1}^{r}u_{sa}^{i \star} \bm{w}^{i \star}\right)_{sa}$ is feasible: $\bm{P}\opt \in \cP$, and that 
\[ \bm{P}\opt \in \cap_{s \in \X} \arg \min_{\bm{P} \in \cP} \langle \bm{P}_{s},\bm{\pi}_{s}\bm{V}\tr  \rangle.\]
\hfill \Halmos
\end{proof}
We conclude this section by contrasting our contributions with the work from~\cite{hu2024efficient}. The authors in~\cite{hu2024efficient} introduced a variant of the uncertainty model~\eqref{eq:mixing r-rec s-rec part 2} and called it {\em $(\xi,\eta)$-uncertainty sets}. The positioning in \cite{hu2024efficient} is very different from ours: \cite{hu2024efficient} focus only on the case of {\em finite} horizon objective, on uncertainty in the transitions {\em and} in the rewards, on sets $\cW$ and $\cU$ based on the Euclidean distance from a nominal estimation, and on policy gradient algorithms. In contrast, our technical setting is different and more general (we focus on finite {\em and} infinite objective function, and on general compact sets $\cW$ and $\cU$), and most importantly, we focus on finding necessary and sufficient conditions for efficiently solving of the policy evaluation problem and the relation between s-tractability and all the important properties of robust MDPs as highlighted in Section \ref{sec:implications of ddp - srec} (strong duality, existence of stationary policies, etc.).

\subsection{Weakly sa-tractable uncertainty sets}\label{sec:weak ssp - sarec}
We now investigate the weak sa-tractability property. In particular, the following theorem shows the equivalence between weak sa-tractability, the feasibility of the vector $\twhat{\bm{u}}^{\pi}$, and a certain weak SSP condition.
\begin{theorem}\label{th:adv DPP, feasible upi, adv SSP equivalent - rsa - sarec}
Let $\cP$ be compact (not necessarily convex). The following statements are {\bf equivalent}. 
    \begin{enumerate}
    \item The set $\cP$ is weakly sa-tractable:
    \begin{equation}\label{eq:adv dpp rsa - sarec}
        \forall \; \pi \in \PiS,\forall \; \bm{r} \in \R^{\X \times \A}, \forall \; \gamma \in [0,1), \forall \; \bm{\mu} \in \Delta(\X), \min_{\bm{P} \in \cP} \bm{\mu}\tr\bm{v}^{\pi,\bm{P}} = \bm{\mu}\tr\twhat{\bm{u}}^{\pi}
    \end{equation}
        \item The vector $\twhat{\bm{u}}^{\pi}$ is feasible under Assumption \ref{assumption:rewards}:
        \begin{equation}\label{eq:feas upi rsa - sarec}
            \forall \; \pi \in \PiS,\forall \; \bm{r} \in \R^{\X \times \A}, \forall \; \gamma \in [0,1), \exists \; \twhat{\bm{P}} \in \cP, \twhat{\bm{u}}^{\pi} = \bm{v}^{\pi,\twhat{\bm{P}}}.
        \end{equation}
        \item The following weak SSP holds:
        \begin{equation}\label{eq:adv ssp - rsa - sarec} 
            \forall \; \bm{V}  \in \R^{\X},  \cap_{(s,a) \in \X \times \A} \arg \min_{\bm{P} \in \cP} \; \langle \bm{P}_{sa},\bm{V}\rangle   \neq \emptyset.
        \end{equation}
    \end{enumerate}
\end{theorem}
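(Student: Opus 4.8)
The plan is to establish the cycle of implications $(1)\Rightarrow(2)\Rightarrow(3)\Rightarrow(1)$, closely mirroring the proofs of Theorem~\ref{th:adv DPP, feasible upi, adv SSP equivalent - rsas' - srec} and Theorem~\ref{th:adv DPP, feasible upi, adv SSP equivalent - rsa - srec}, but working throughout with the sa-rectangular Bellman operator $\twhat{T}^{\pi}$ from \eqref{eq:operator hat T pi}, its unique fixed point $\twhat{\bm{u}}^{\pi}$, and the weak SSP~\eqref{eq:adv ssp - rsa - sarec} indexed by all state-action pairs. For $(1)\Rightarrow(2)$ I would argue by contradiction exactly as in Part~1 of the proof of Theorem~\ref{th:adv DPP, feasible upi, adv SSP equivalent - rsas' - srec}: if \eqref{eq:feas upi rsa - sarec} fails for some $\pi\in\PiS$, some $\bm{r}$ satisfying Assumption~\ref{assumption:rewards}, and some $\gamma\in[0,1)$, then for every $\bm{P}\in\cP$ there is a state $\bar{s}$ with $\twhat{u}^{\pi}_{\bar{s}}<v^{\pi,\bm{P}}_{\bar{s}}$, while $\twhat{\bm{u}}^{\pi}\le\bm{v}^{\pi,\bm{P}}$ componentwise by \eqref{eq:ordering u's}; taking $\bm{\mu}=\tfrac{1}{|\X|}(1,\dots,1)$ and a minimizer $\twhat{\bm{P}}$ of $\bm{\mu}\tr\bm{v}^{\pi,\bm{P}}$ over $\cP$ yields $\min_{\bm{P}\in\cP}\bm{\mu}\tr\bm{v}^{\pi,\bm{P}}>\bm{\mu}\tr\twhat{\bm{u}}^{\pi}$, contradicting \eqref{eq:adv dpp rsa - sarec}.

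For $(2)\Rightarrow(3)$, fix $\bm{V}\in\R^{\X}$. The idea is to build an MDP instance whose rewards satisfy Assumption~\ref{assumption:rewards} and for which $\twhat{\bm{u}}^{\pi}=\bm{V}$, so that feasibility forces a simultaneous minimizer. I would take $\pi$ the uniform policy (so $\pi_{sa}=1/|\A|>0$), fix any $\gamma\in(0,1)$, and set the scalar rewards $r_{sa}:=V_{s}-\gamma\min_{\bm{P}\in\cP}\bm{P}_{sa}\tr\bm{V}$. Since $\bm{P}_{sa}$ is a probability vector and, under Assumption~\ref{assumption:rewards}, $\bm{r}_{sa}$ has all entries equal to $r_{sa}$, we have $\bm{P}_{sa}\tr(\bm{r}_{sa}+\gamma\bm{V})=r_{sa}+\gamma\bm{P}_{sa}\tr\bm{V}$, hence $\twhat{T}^{\pi}(\bm{V})_{s}=\sum_{a}\pi_{sa}(r_{sa}+\gamma\min_{\bm{P}\in\cP}\bm{P}_{sa}\tr\bm{V})=V_{s}$, so $\twhat{\bm{u}}^{\pi}=\bm{V}$. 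By \eqref{eq:feas upi rsa - sarec} there is $\twhat{\bm{P}}\in\cP$ with $\bm{V}=\bm{v}^{\pi,\twhat{\bm{P}}}$, i.e. $\bm{V}$ is the fixed point of $T^{\pi}_{\twhat{\bm{P}}}$, which gives $V_{s}=\sum_{a}\pi_{sa}(r_{sa}+\gamma\,\twhat{\bm{P}}_{sa}\tr\bm{V})$. Subtracting the two expressions for $V_{s}$ yields $\sum_{a}\pi_{sa}\gamma\big(\twhat{\bm{P}}_{sa}\tr\bm{V}-\min_{\bm{P}\in\cP}\bm{P}_{sa}\tr\bm{V}\big)=0$; every summand is nonnegative and $\pi_{sa}\gamma>0$, so $\twhat{\bm{P}}_{sa}\tr\bm{V}=\min_{\bm{P}\in\cP}\langle\bm{P}_{sa},\bm{V}\rangle$ for every $(s,a)$, i.e. $\twhat{\bm{P}}\in\cap_{(s,a)}\arg\min_{\bm{P}\in\cP}\langle\bm{P}_{sa},\bm{V}\rangle\ne\emptyset$.

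For $(3)\Rightarrow(1)$, fix $\pi\in\PiS$, $\bm{r}$ satisfying Assumption~\ref{assumption:rewards}, $\gamma\in[0,1)$, and $\bm{\mu}\in\Delta(\X)$. The inequality $\bm{\mu}\tr\twhat{\bm{u}}^{\pi}\le\min_{\bm{P}\in\cP}\bm{\mu}\tr\bm{v}^{\pi,\bm{P}}$ always holds by \eqref{eq:ordering u's}. For the reverse, apply \eqref{eq:adv ssp - rsa - sarec} with $\bm{V}=\twhat{\bm{u}}^{\pi}$ to obtain $\twhat{\bm{P}}\in\cP$ that minimizes $\langle\bm{P}_{sa},\twhat{\bm{u}}^{\pi}\rangle$ simultaneously over all $(s,a)$; since under Assumption~\ref{assumption:rewards} the term $\bm{P}_{sa}\tr\bm{r}_{sa}=r_{sa}$ does not depend on $\bm{P}$, this same $\twhat{\bm{P}}$ minimizes $\bm{P}_{sa}\tr(\bm{r}_{sa}+\gamma\twhat{\bm{u}}^{\pi})$ over $\cP$ for every $(s,a)$. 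Therefore $T^{\pi}_{\twhat{\bm{P}}}(\twhat{\bm{u}}^{\pi})_{s}=\sum_{a}\pi_{sa}\min_{\bm{P}\in\cP}\bm{P}_{sa}\tr(\bm{r}_{sa}+\gamma\twhat{\bm{u}}^{\pi})=\twhat{T}^{\pi}(\twhat{\bm{u}}^{\pi})_{s}=\twhat{u}^{\pi}_{s}$, so $\twhat{\bm{u}}^{\pi}$ is a fixed point of the contraction $T^{\pi}_{\twhat{\bm{P}}}$ and hence $\twhat{\bm{u}}^{\pi}=\bm{v}^{\pi,\twhat{\bm{P}}}$ by uniqueness. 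Consequently $\bm{\mu}\tr\twhat{\bm{u}}^{\pi}=\bm{\mu}\tr\bm{v}^{\pi,\twhat{\bm{P}}}\ge\min_{\bm{P}\in\cP}\bm{\mu}\tr\bm{v}^{\pi,\bm{P}}$, which together with the reverse inequality gives \eqref{eq:adv dpp rsa - sarec}.

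The only genuinely delicate step is $(2)\Rightarrow(3)$: a single feasibility statement $\twhat{\bm{u}}^{\pi}=\bm{v}^{\pi,\twhat{\bm{P}}}$ has to be converted into per-$(s,a)$ optimality of $\twhat{\bm{P}}$, and this is precisely where choosing the uniform policy and $\gamma>0$ is essential, together with the observation that under Assumption~\ref{assumption:rewards} the inner minimization defining $\twhat{T}^{\pi}$ ignores the reward term, so that $\arg\min_{\bm{P}\in\cP}\bm{P}_{sa}\tr(\bm{r}_{sa}+\gamma\,\cdot\,)$ coincides with the $\arg\min_{\bm{P}\in\cP}\langle\bm{P}_{sa},\,\cdot\,\rangle$ appearing in the SSP. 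All remaining steps are verbatim or near-verbatim adaptations of arguments already given in the excerpt, so I do not expect further difficulties.
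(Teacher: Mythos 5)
Your proof is correct and is exactly the adaptation the paper prescribes (the paper omits this proof, stating it follows by combining the argument of Theorem~\ref{th:adv DPP, feasible upi, adv SSP equivalent - rsas' - sarec} with the reward restriction of Assumption~\ref{assumption:rewards}, which is what you do). One small point in your favor: in the step $(2)\Rightarrow(3)$ you correctly insist on $\gamma\in(0,1)$ and make the termwise cancellation $\sum_{a}\pi_{sa}\gamma\bigl(\twhat{\bm{P}}_{sa}\tr\bm{V}-\min_{\bm{P}\in\cP}\bm{P}_{sa}\tr\bm{V}\bigr)=0$ explicit, whereas the analogous Part~2 of the paper's proof of Theorem~\ref{th:adv DPP, feasible upi, adv SSP equivalent - rsa - srec} says ``fix any $\gamma\in[0,1)$'' even though $\gamma=0$ would make the construction uninformative.
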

As evident from its formulation, the proof of Theorem \ref{th:adv DPP, feasible upi, adv SSP equivalent - rsa - sarec} can be obtained by adapting the proof of Theorem \ref{th:adv DPP, feasible upi, adv SSP equivalent - rsas' - sarec} based on Assumption \ref{assumption:rewards}, in exactly the same way that we adapted the proof of Theorem \ref{th:adv DPP, feasible upi, adv SSP equivalent - rsas' - srec} to show Theorem \ref{th:adv DPP, feasible upi, adv SSP equivalent - rsa - srec}. We, therefore, omit it for the sake of conciseness. We now discuss the important consequences of weak sa-tractability.
\paragraph{Implications of weak sa-tractability.}  We first note that following Proposition \ref{prop:sa tractable are s tractable}, weakly sa-tractable uncertainty sets are also weakly s-tractable, and $\twhat{\bm{u}}^{\pi} = \bm{u}^{\pi}$ for any $\pi \in \PiS$. Therefore, all the conclusions from Section \ref{sec:implications of ddp - srec} also hold for weakly sa-tractable uncertainty sets, i.e. we still have $\max_{\pi \in \PiS} \min_{\bm{P} \in \cP} \bm{\mu}\tr\bm{v}^{\pi,\bm{P}} = \bm{\mu}\tr\bm{u}\opt$ and $\pi\opt \in \arg \max_{\pi \in \PiS} \min_{\bm{P} \in \cP} \bm{\mu}\tr\bm{v}^{\pi,\bm{P}}$ for $\pi\opt \in \PiS$ such that $\bm{u}\opt = \twhat{\bm{u}}^{\pi\opt}$. Additionally, if $\cP$ is convex, then strong duality holds, and there exists a stationary optimal policy. 
An important additional property that holds for weakly sa-tractable uncertainty sets is that an optimal policy may be chosen deterministic, as we prove next.
\begin{proposition}\label{prop:sa-tractable deterministic policy}
    Let $\cP$ be a compact and convex uncertainty set. Assume that $\cP$ is weakly sa-tractable. Then there exists a stationary deterministic policy that is optimal for any choice of $\bm{\mu} \in \Delta(\X)$.
\end{proposition}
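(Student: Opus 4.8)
The plan is to show that the optimal robust value $\bm{\mu}\tr\bm{u}\opt$, where $\bm{u}\opt$ is the fixed point of the operator $T$ from~\eqref{eq:operator T}, is attained by the deterministic policy that is ``greedy'' with respect to $\bm{u}\opt$ in the sa-rectangular sense; the key is that for weakly sa-tractable sets the s-rectangular and sa-rectangular optimal-value operators share the same fixed point.

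First I would record the consequences of the hypotheses. Since $\cP$ is weakly sa-tractable, it is also weakly s-tractable and $\twhat{\bm{u}}^{\pi} = \bm{u}^{\pi}$ for every $\pi \in \PiS$ (Proposition~\ref{prop:sa tractable are s tractable}, applied under Assumption~\ref{assumption:rewards}). Hence Theorem~\ref{th:solving max min} applies: $\bm{u}^{\pi} \leq \bm{u}\opt$ for all $\pi \in \PiS$, with equality for at least one $\pi$, so $\bm{u}\opt = \max_{\pi \in \PiS} \bm{u}^{\pi}$ pointwise with the maximum attained; and since $\cP$ is convex compact, Theorem~\ref{th:existence of stationary optimal policies} gives $\sup_{\pi \in \PiH} \inf_{\bm{P} \in \cP} \bm{\mu}\tr\bm{v}^{\pi,\bm{P}} = \bm{\mu}\tr\bm{u}\opt$ for every $\bm{\mu} \in \Delta(\X)$.

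Next I would introduce $\twhat{T}\colon \R^{\X} \to \R^{\X}$, $\twhat{T}(\bm{v})_{s} := \max_{\pi \in \PiS} \twhat{T}^{\pi}(\bm{v})_{s}$, with $\twhat{T}^{\pi}$ as in~\eqref{eq:operator hat T pi}. Because the map $\bm{\pi}_{s} \mapsto \sum_{a \in \A} \pi_{sa} \min_{\bm{P} \in \cP} \bm{P}_{sa}\tr(\bm{r}_{sa} + \gamma \bm{v})$ is linear over the simplex $\Delta(\A)$, its maximum is attained at a vertex, so $\twhat{T}(\bm{v})_{s} = \max_{a \in \A} \min_{\bm{P} \in \cP} \bm{P}_{sa}\tr(\bm{r}_{sa} + \gamma \bm{v})$. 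Like $T$, the operator $\twhat{T}$ is a monotone $\gamma$-contraction (a maximum of monotone $\gamma$-contractions $\twhat{T}^{\pi}$), and the maximization defining its $s$-th coordinate involves only the $s$-th component of the policy; repeating the argument in the proof of Theorem~\ref{th:solving max min} applied to $\twhat{T}$ and the $\twhat{T}^{\pi}$'s then shows that its unique fixed point $\twhat{\bm{u}}\opt$ satisfies $\twhat{\bm{u}}\opt = \max_{\pi \in \PiS} \twhat{\bm{u}}^{\pi}$ pointwise with the maximum attained. Chaining the three identities $\bm{u}\opt = \max_{\pi} \bm{u}^{\pi} = \max_{\pi} \twhat{\bm{u}}^{\pi} = \twhat{\bm{u}}\opt$ (the middle one being $\bm{u}^{\pi} = \twhat{\bm{u}}^{\pi}$ for all $\pi$) yields $\bm{u}\opt = \twhat{\bm{u}}\opt$. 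I expect this identification — lifting the per-policy equality $\bm{u}^{\pi} = \twhat{\bm{u}}^{\pi}$, which is exactly where weak sa-tractability enters, to the level of the optimal-value operators — to be the main obstacle; everything around it is bookkeeping.

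Finally I would extract the deterministic policy. For each $s \in \X$ pick $a_{s}\opt \in \arg\max_{a \in \A} \min_{\bm{P} \in \cP} \bm{P}_{sa}\tr(\bm{r}_{sa} + \gamma \bm{u}\opt)$ and let $\pi^{d} \in \PiSD$ be the deterministic policy with $\pi^{d}_{s} = a_{s}\opt$. Then $\twhat{T}^{\pi^{d}}(\bm{u}\opt) = \twhat{T}(\bm{u}\opt) = \twhat{\bm{u}}\opt = \bm{u}\opt$, so $\bm{u}\opt$ is the fixed point of the contraction $\twhat{T}^{\pi^{d}}$, i.e. $\twhat{\bm{u}}^{\pi^{d}} = \bm{u}\opt$, and by weak sa-tractability $\bm{u}^{\pi^{d}} = \twhat{\bm{u}}^{\pi^{d}} = \bm{u}\opt$. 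Weak s-tractability then gives, for every $\bm{\mu} \in \Delta(\X)$, $\min_{\bm{P} \in \cP} \bm{\mu}\tr\bm{v}^{\pi^{d},\bm{P}} = \bm{\mu}\tr\bm{u}^{\pi^{d}} = \bm{\mu}\tr\bm{u}\opt = \sup_{\pi \in \PiH} \inf_{\bm{P} \in \cP} \bm{\mu}\tr\bm{v}^{\pi,\bm{P}}$, so $\pi^{d}$ is a stationary deterministic policy that is optimal for every $\bm{\mu}$, which is the claim. Note $\pi^{d}$ depends only on $\bm{u}\opt$, which is independent of $\bm{\mu}$, so a single $\pi^{d}$ works uniformly in $\bm{\mu}$.
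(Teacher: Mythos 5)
Your proposal is correct and follows essentially the same route as the paper's proof: both hinge on the identity $\bm{u}^{\pi}=\twhat{\bm{u}}^{\pi}$ supplied by weak sa-tractability (Proposition \ref{prop:sa tractable are s tractable} under Assumption \ref{assumption:rewards}) to transfer optimality from the s-rectangular operators $T^{\pi}$ to the sa-rectangular operators $\twhat{T}^{\pi}$, and then exploit the linearity of $\bm{\pi}_{s}\mapsto\twhat{T}^{\pi}(\bm{v})_{s}$ over $\Delta(\A)$ to extract a deterministic maximizer. The only difference is organizational: you introduce the optimality operator $\twhat{T}$ and its fixed point $\twhat{\bm{u}}\opt$ explicitly and prove $\bm{u}\opt=\twhat{\bm{u}}\opt$, whereas the paper runs the same argument as a chain of inequalities around the specific policy $\pi\opt$ with $\bm{u}\opt=\bm{u}^{\pi\opt}$.
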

\begin{proof}{Proof of Proposition \ref{prop:sa-tractable deterministic policy}.}
Recall that from the proof of Theorem \ref{th:existence of stationary optimal policies}, we can choose a stationary policy  $\pi\opt$ that remains optimal for any $\bm{\mu} \in \Delta(\X)$ as $\pi\opt$ such that $\bm{u}\opt = \bm{u}^{\pi\opt}$, i.e. such that for any $s \in \X$,
\begin{equation}\label{eq:piopt in arg max T pi}
    \pi\opt \in \arg \max_{\pi \in \PiS}T^{\pi}(\bm{u}^{\pi\opt})_{s}.
\end{equation}
From the weak sa-tractability of $\cP$ and Proposition \ref{prop:sa tractable are s tractable}, we know that $\bm{u}^{\pi\opt} = \twhat{\bm{u}}^{\pi\opt}$. Additionally, for any $\pi \in \PiS$ and $s \in \X$, we have
\[\twhat{T}^{\pi}(\twhat{\bm{u}}^{\pi\opt}) \leq T^{\pi}(\twhat{\bm{u}}^{\pi\opt}) = T^{\pi}(\bm{u}^{\pi\opt}) \leq T^{\pi\opt}(\bm{u}^{\pi\opt}) = \bm{u}^{\pi\opt} = \twhat{\bm{u}}^{\pi\opt} = \twhat{T}^{\pi\opt}(\twhat{\bm{u}}^{\pi\opt})\]
where the first inequality follows from $\twhat{T}^{\pi}(\bm{v}) \leq T^{\pi}(\bm{v})$ for any $\bm{v} \in \R^{\X}$, the first equality follows from $\bm{u}^{\pi\opt} = \twhat{\bm{u}}^{\pi\opt}$, the second equality follows from \eqref{eq:piopt in arg max T pi}, and the last equalities follow the definition of $\bm{u}^{\pi\opt}$ and $\twhat{\bm{u}}^{\pi\opt}$ as the fixed points of $T^{\pi\opt}$ and $\twhat{T}^{\pi\opt}.$
We therefore conclude that $\pi\opt \in \arg \max_{\pi \in \PiS}\twhat{T}^{\pi}(\bm{u}^{\pi\opt})_{s}.$
Given the expression of the operator $\twhat{T}^{\pi}$ as in \eqref{eq:operator hat T pi}, we conclude that $\pi\opt$ can be chosen deterministic, which concludes our proof.
\hfill \Halmos
\end{proof}
We now turn to describing several uncertainty sets satisfying the weak SSP~\eqref{eq:adv ssp - rsa - sarec}. 
\paragraph{r-rectangular uncertainty.}
We first note that r-rectangular uncertainty sets satisfy \eqref{eq:adv ssp - rsa - sarec}. Indeed, if $\cP$ is r-rectangular as in \eqref{eq:r-rectangular uncertainty}, then for any $\bm{V} \in \R^{\X}$ and $(s,a) \in \X \times \A$, we have $\langle \bm{P}_{sa},\bm{V} \rangle = \sum_{i=1}^{r}u_{sa}^{i}\langle \bm{w}^{i},\bm{V}\rangle$. Therefore, if we take $\bm{w}^{i \star} \in \arg \min_{\bm{w}^{i} \in \cW^{i}} \langle \bm{w}^{i},\bm{V}\rangle$ for each $i \in [r]$, we recover a feasible transition probabilities $\bm{P}\opt$ such that $\bm{P}\opt \in \cap_{s,a} \arg \min_{\bm{P} \in \cP} \langle \bm{P}_{sa},\bm{V} \rangle$. This shows that r-rectangular uncertainty sets satisfy \eqref{eq:adv ssp - rsa - sarec}, and in particular, that r-rectangular uncertainty sets are weakly sa-tractable: the problems of policy evaluation and of finding an optimal policy can be solved by solving a robust MDP over their sa-rectangular extension. This is a surprising result since r-rectangular models were originally introduced to overcome the conservativeness of rectangular models and since r-rectangularity generalizes sa-rectangularity.
We now describe another model of uncertainty generalizing r-rectangularity and satisfying \eqref{eq:adv ssp - rsa - sarec}, close to the model~\eqref{eq:mixing r-rec s-rec part 2}. 
\paragraph{Generalizing r-rectangularity: part 3.} 
Consider a model of uncertainty similar to \eqref{eq:mixing r-rec s-rec part 2} but where the coefficients can be chosen in a Cartesian product set over all pairs of state-actions:
\begin{equation}\label{eq:mixing r-rec s-rec part 3}
    \begin{aligned}
    \mathcal{P} &  = \left\{ \left.\left(\sum_{i=1}^r u^i_{sa}\bm{w}_{i} \right)_{sa} \right|(\bm{w}^{1},\ldots,\bm{w}^{r}) \in \cW, \left(\bm{u}_{sa}\right)_{sa}\in \mathcal{U} \right\},\\
    \cW & = \times_{i \in [r]} \cW^{i}, \cW^{i} \subset \Delta(\X), \\
    \cU & = \times_{(s,a) \in \X \times \A} \cU^{sa}, \cU^{sa} \subset \Delta([r])
    \end{aligned}
\end{equation}
for some compact sets $\cW,\cU$. Clearly, uncertainty sets as in \eqref{eq:mixing r-rec s-rec part 3} also belong to the uncertainty model~\eqref{eq:mixing r-rec s-rec part 2}, and \eqref{eq:mixing r-rec s-rec part 3} generalizes r-rectangular uncertainty sets, which corresponds to the case where $\cU$ is a singleton. It is straightforward to verify that the weak SSP~\eqref{eq:adv ssp - rsa - sarec} holds for uncertainty sets as in \eqref{eq:mixing r-rec s-rec part 3}: for $\bm{V} \in \R^{\X}$, we can recover a feasible optimal transition probabilities $\bm{P}\opt = \left(\sum_{i=1}^{r}u_{sa}^{i \star}\bm{w}^{i \star}\right)_{sa}$ with
\begin{align*}
    \bm{w}^{i \star} & \in \arg \min_{\bm{w}^{i} \in \cW^{i}} \bm{w}^{i \; \top}\bm{V}, \forall \; i \in [r],\\
    \bm{u}_{sa}^{i \star} & \in \arg \min_{\bm{u} \in \cU_{sa}} \bm{u}_{sa}\tr\left(\bm{w}^{i \star\; \top}\bm{V}\right)_{i \in [r]}, \forall \; (s,a) \in \X \times \A.
\end{align*}
We summarize the different inclusions between the models of uncertainty described in our paper in Figure \ref{fig:inclusion_model}, and we summarize our main results for this section in Figure \ref{fig:contributions_third_part}.
\begin{figure}
    \centering
    \includegraphics[width=0.7\linewidth]{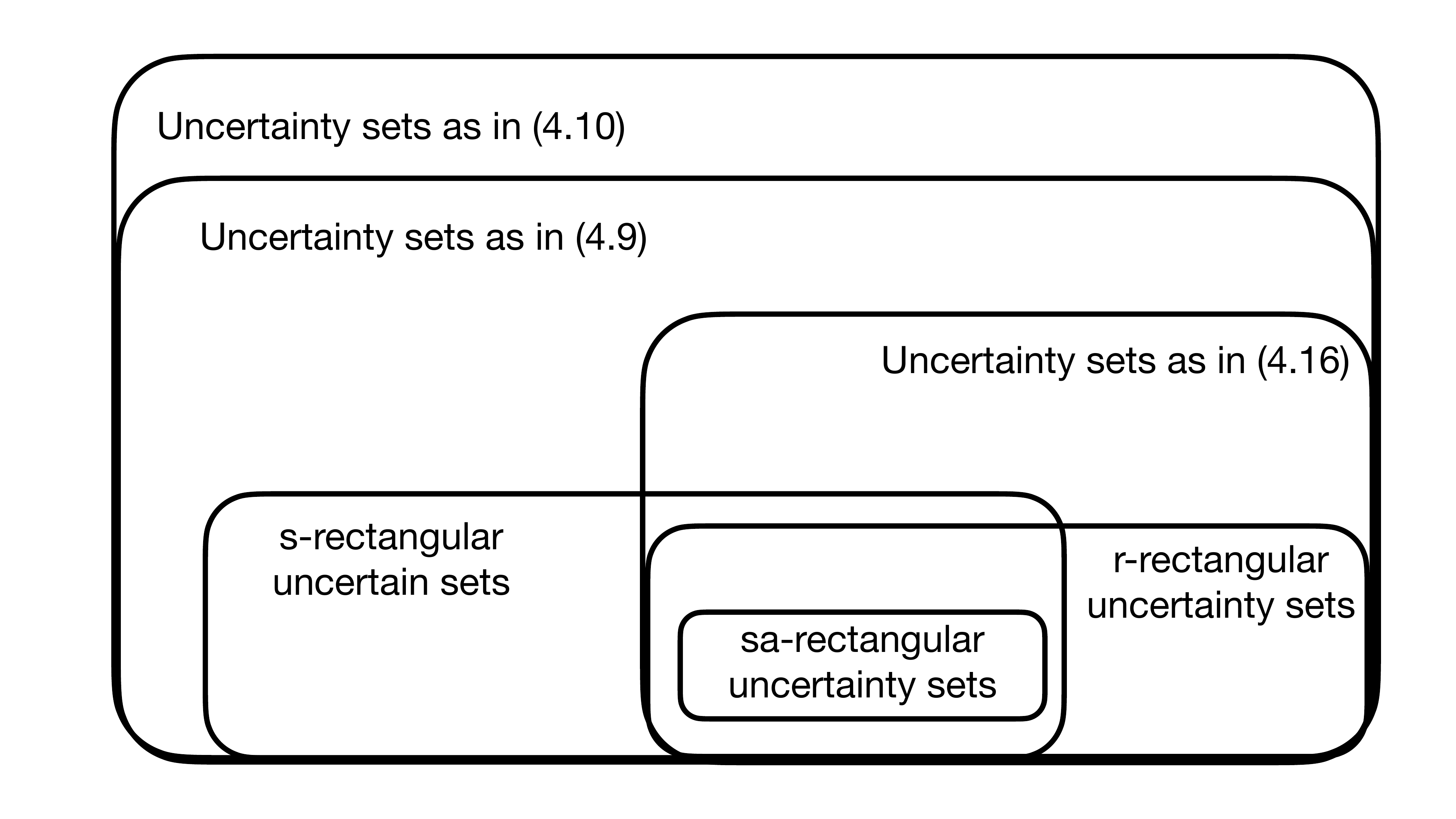}
    \caption{Inclusions between sa-rectangular, s-rectangular, r-rectangular and the new models of uncertainty introduced in this paper.}
    \label{fig:inclusion_model}
\end{figure}
\begin{figure}
    \centering
    \includegraphics[width=1\linewidth]{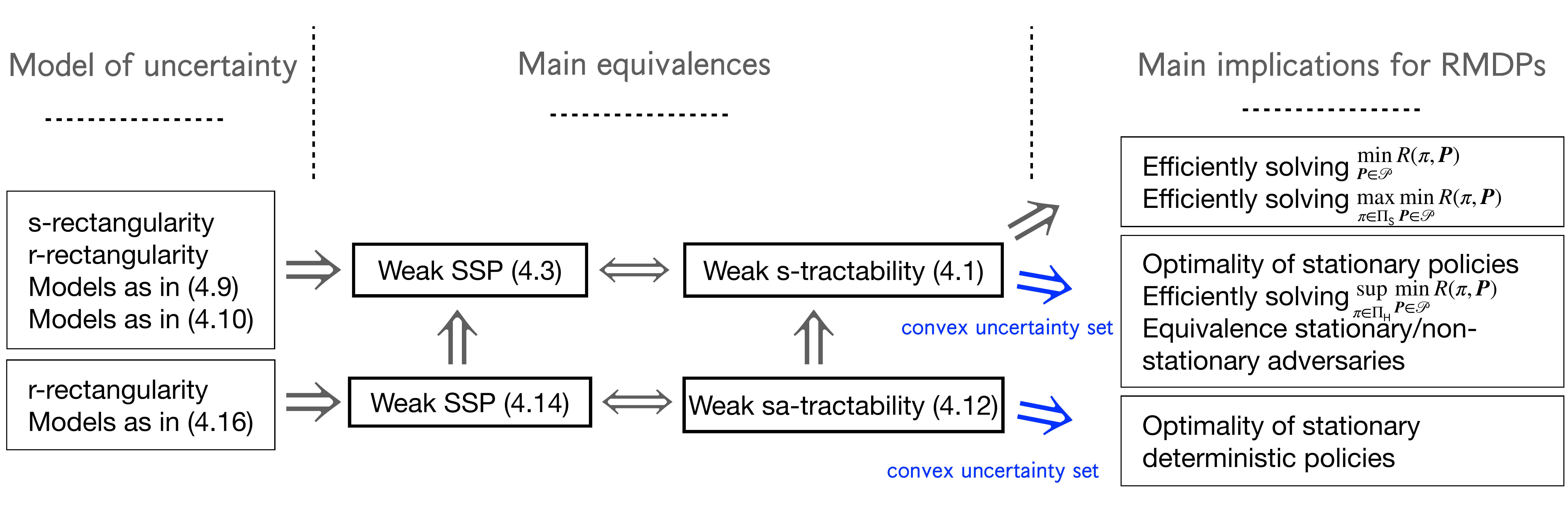}
    \caption{Summary of our main results from Section \ref{sec:weak ssp}. The uncertainty sets are assumed to be compact. The implications in blue require the additional assumption that the uncertainty set is convex.}
    \label{fig:contributions_third_part}
\end{figure}
\section{Discussion}\label{sec:discussion}
We now discuss some of the most important results of our paper.
\paragraph{Properties of robust MDPs.} Our results highlight interesting connections between the important properties of robust MDPs. 

In particular, we are the first to show that the weak tractability of the policy evaluation problem implies that we can efficiently solve the max-min problem, see Theorem \ref{th:solving max min}. This suggests that the main difficulty in solving robust MDPs lies in the ability to compute worst-case returns over $\cP$ for each policy $\pi \in \PiS$. Because of the Cartesian product structure of the set of stationary policies: $\PiS = \times_{s \in \X} \Delta(\A)$, it is possible to obtain a dynamic programming equation for solving the max-min problem, if we have a dynamic programming equation for the minimization problem, i.e. if $\cP$ is (weakly) tractable. Interestingly, the main results for r-rectangular models were obtained in this order: the r-rectangular model was first introduced in~\cite{goh2018data} where the authors only show how to solve policy evaluation via dynamic programming, before~\cite{goyal2022robust} show how to solve the robust MDP (max-min) problem via dynamic programming.

Our key theorems yielding sufficient and necessary conditions for (weak) tractability highlight the main difficulty behind obtaining a dynamic programming equation for policy evaluation: this is possible {\em if and only if} one can solve several linear programs over $\cP$ simultaneously, a condition that we call {\em simultaneous solvability property} (SSP). At a high level, these conditions arise from the feasibility of the operators $T^{\pi}$ and $\twhat{T}^{\pi}$ (which independently minimize over $\cP$ on each component) within the set of operators $\{T^{\pi,\bm{P}} \; | \; \bm{P} \in \cP\}$, in the precise sense that
\begin{align*}
     \eqref{eq:adv ssp - rsa - srec} & \iff \forall \; \bm{V} \in \R^{\X}, \forall \; \pi \in \PiS, T^{\pi}(\bm{V}) \in \{T^{\pi,\bm{P}}(\bm{V}) \; | \; \bm{P} \in \cP\} \\
          \eqref{eq:adv ssp - rsa - sarec} & \iff \forall \; \bm{V} \in \R^{\X}, \forall \; \pi \in \PiS, \twhat{T}^{\pi}(\bm{V}) \in \{T^{\pi,\bm{P}}(\bm{V}) \; | \; \bm{P} \in \cP\}
\end{align*}
We note that the weak~SSPs \eqref{eq:adv ssp - rsa - srec} and \eqref{eq:adv ssp - rsa - sarec} could be reformulated in various ways, and we have done our best to provide a formulation of these conditions that is both readable and easily verifiable.

Finally, we also highlight explicitly that the existence of stationary optimal policies follows from strong duality, see Lemma \ref{lem:strong duality implies stationary opt policies}. We note that a similar approach is undertaken in \cite{wiesemann2013robust,goyal2022robust} without emphasizing the key role of strong duality. Interestingly, strong duality follows from Sion's minimax theorem for convex-concave saddle-point problems over convex compact sets, {\em even though robust MDPs are not convex-concave problems}. This is because the optimal value function $\bm{u}\opt$ is the solution to the convex-concave saddle-point problems for each component of $T(\bm{u}\opt)$, with $T$ the Bellman operator as defined in \eqref{eq:operator T}. While some of the properties that we show in Section \ref{sec:implications of ddp - srec} were already known for some models of uncertainty (e.g. for r-rectangular models), the main contribution of this paper is to provide a unifying approach to the tractability of uncertainty models, as we describe next.
\paragraph{(Weakly) Tractable uncertainty models by design.} Our results suggest a change of paradigm in the design of uncertainty sets. In particular, different models of uncertainty, such as s-rectangularity or r-rectangularity, have been introduced independently, each requiring a particular approach for showing their important properties, such as efficient algorithms for solving policy evaluation and the existence of stationary optimal policies. In contrast, based on Theorem \ref{th:adv DPP, feasible upi, adv SSP equivalent - rsa - srec} and Theorem \ref{th:adv DPP, feasible upi, adv SSP equivalent - rsa - sarec}, the weak tractability of an uncertainty set $\cP$ can be verified easily and in a unified way, by verifying that the weak SSP conditions hold, which we show are {\em necessary and sufficient} for weak s-tractability and weak sa-tractability. 
We also uncover a wealth of non-rectangular weakly s-tractable uncertainty sets, some of which are already known and well-studied (such as r-rectangular uncertainty sets), some of which are new (such as \eqref{eq:mixing r-rec s-rec part 1}), and some of which have been recently introduced but not studied from a tractability standpoint (such as \eqref{eq:mixing r-rec s-rec part 2}). We introduce an additional non-rectangular model of uncertainty that is weakly sa-tractable in Section \ref{sec:weak ssp - sarec}. It is likely that several other weakly tractable models of uncertainty could be introduced since, based on our results, one can obtain the weak tractability of new uncertainty models ``by design", ensuring that they satisfy the weak SSP conditions. In our paper, we refrain from naming the new non-rectangular uncertainty sets that we introduce: we believe that our main contribution lies in obtaining simple {\em necessary and sufficient} conditions for verifying the weak tractability of an uncertainty set, and that our results actually undermine the case for non-rectangular uncertainty models, as we discuss below.

\paragraph{The case for rectangular uncertainty sets.} Overall, our results from Section \ref{sec:ssp} show that in all generality (i.e. without any assumption on the rewards), only rectangular uncertainty sets are tractable. Under an additional assumption (see Assumption \ref{assumption:rewards}), there exists several non-rectangular uncertainty sets that may be weakly tractable, but all these models of uncertainty sets are ``equivalent'' to rectangular models, since we proved that dynamic programming-based approaches always bring down to solving policy evaluation {\em over rectangular models}. We emphasize that this is one of the key contributions of our paper: we prove that we can find an optimal policy or evaluate the worst-case return of a policy for these {\em non-rectangular} models by {\em solving the same optimization problems over their s-rectangular or sa-rectangular extensions}. This is true for all the non-rectangular models known in the literature, i.e., for the r-rectangularity model introduced in \cite{goh2018data,goyal2022robust} and for the recent model of $(\xi,\eta)$-uncertainty~\cite{hu2024efficient}. This undermines the narrative behind the introduction of non-rectangular uncertainty models, which were introduced to overcome the conservativeness of rectangular models. 
\paragraph{Beyond dynamic programming.} As an important next direction of research for the RMDP community, we argue that the search for new models of uncertainty should focus on finding models that are not s-tractable or sa-tractable but for which we can still efficiently compute an optimal policy, i.e. to focus on finding models of uncertainty that do not rely on dynamic programming for their solutions. This represents a significant departure from the usual solution methods for robust MDPs. In particular, we have shown in Section \ref{sec:implications of ddp - srec} that the (weak) tractability of an uncertainty set has several consequences, including the existence of stationary optimal policies {\em that do not depend on the initial distribution} (Corollary \ref{cor: adv dpp implies max min opt independent of mu} and Theorem \ref{th:existence of stationary optimal policies}). This property is attractive from an optimization standpoint and follows from an optimal value function dominating the value function of any other policy (see Appendix \ref{app:proof value functions}). However, in practice, it is conceivable that the decision-maker is willing to stick to stationary policies but would like to adapt their policies depending on where the decision process starts, i.e., depending on the initial distribution, and in fact, the existence itself of stationary optimal policies may also be dependent on the choice of initial distribution. We provide examples of non-rectangular uncertainty sets satisfying these properties in Appendix \ref{app:example discussion}, and we leave generalizing these examples for future work.
\section{Conclusion}
We show that the {\em tractability} of policy evaluation, i.e. the existence of dynamic programming-based approach for policy evaluation is equivalent to a property of the uncertainty sets that we term {\em simultaneous solvability property} (SSP). This provides a unifying approach to investigating the tractability of various models of uncertainty. Crucially, we prove that in general, only the s-rectangular and sa-rectangular models are tractable. 
Some non-rectangular models can be {\em weakly} tractable, requiring the assumption that the rewards do not depend on the next state, and in this case weak tractability is equivalent to a weaker form of SSP. However, even weakly tractable non-rectangular models are equivalent to rectangular models in the sense that their worst-case and optimal value functions can be computed by solving worst-case and optimal value functions over their rectangular extensions. Our work opens interesting directions for future research, perhaps the most exciting being finding uncertainty models that do not rely on dynamic programming, thereby potentially identifying entirely novel approaches for solving robust MDPs.

\bibliographystyle{alpha}
\bibliography{ref}

\begin{APPENDICES}

\section{Proofs of Theorem \ref{th:adv DPP, feasible upi, adv SSP equivalent - rsas' - sarec} and Theorem \ref{th:reformulation adv ssp rsas' - sarec} }\label{app:proof rsas' sarec}
The proof of Theorem \ref{th:adv DPP, feasible upi, adv SSP equivalent - rsas' - sarec} is very close to the proof of Theorem \ref{th:adv DPP, feasible upi, adv SSP equivalent - rsas' - srec}. We only describe the main changes below.
\begin{proof}{Proof of Theorem \ref{th:adv DPP, feasible upi, adv SSP equivalent - rsas' - sarec}.}
The proof proceeds in three parts.

{\bf Part 1:} \eqref{eq:adv dpp rsas' - sarec} $\Rightarrow$ \eqref{eq:feas upi rsas' - sarec}. This implication follows verbatim from the fact that \eqref{eq:adv dpp rsas' - srec} $\Rightarrow$ \eqref{eq:feas upi rsas' - srec} in the proof of Theorem \ref{th:adv DPP, feasible upi, adv SSP equivalent - rsas' - srec}.

{\bf Part 2:} \eqref{eq:feas upi rsas' - sarec} $\Rightarrow$ \eqref{eq:adv ssp rsas' - sarec}. Let $\bm{V} \in \R^{\X \times \A \times \X}$, $\pi \in \PiS$ be the uniform policy, $\gamma=0$, and $\bm{r}=\bm{V}$. By \eqref{eq:feas upi rsas' - sarec} it follows that $\twhat{\bm{u}}^{\pi} = \bm{v}^{\pi,\twhat{\bm{P}}}$ for some $\twhat{\bm{P}} \in \cP$. Note that by definition,
\begin{align*}
    v^{\pi,\twhat{\bm{P}}}_{s} & = \sum_{a \in \A} \pi_{sa}\twhat{\bm{P}}_{sa}\tr\bm{r}_{sa} = \frac{1}{|\A|} \sum_{a \in \A} \twhat{\bm{P}}_{sa}\tr\bm{r}_{sa} = \frac{1}{|\A|} \sum_{a \in \A} \twhat{\bm{P}}_{sa}\tr\bm{V}_{sa} \\
    \twhat{u}^{\pi}_{s} & = \frac{1}{|\A|}\sum_{a \in \A} \min_{\bm{P} \in \cP} \bm{P}_{sa}\tr\bm{r}_{sa} = \frac{1}{|\A|}\sum_{a \in \A} \min_{\bm{P} \in \cP} \bm{P}_{sa}\tr\bm{V}_{sa}
\end{align*}
Therefore, 
\[ \twhat{\bm{P}} \in \cap_{(s,a) \in \X \times \A} \arg \min_{\bm{P} \in \cP} \langle \twhat{\bm{P}}_{sa},\bm{V}_{sa}\rangle.\]

{\bf Part 3:} \eqref{eq:adv ssp rsas' - sarec} $\Rightarrow$ \eqref{eq:adv dpp rsas' - sarec}. The same arguments as for \eqref{eq:adv ssp rsas' - srec} $\Rightarrow$ \eqref{eq:adv dpp rsas' - srec} shows that we always have 
\[\bm{\mu}\tr\twhat{\bm{u}}^{\pi} \leq \min_{\bm{P} \in \cP} \bm{\mu}\tr\bm{v}^{\pi,\bm{P}}\]
and that for some $\twhat{\bm{P}} \in \cP$ we have $\twhat{\bm{u}}^{\pi} = \bm{v}^{\pi,\twhat{\bm{P}}}$
from which we conclude that \eqref{eq:adv dpp rsas' - srec} holds.
\end{proof}

\begin{proof}{Proof of Theorem \ref{th:reformulation adv ssp rsas' - sarec}.}
    Similarly as for Theorem \ref{th:reformulation adv ssp rsas' - srec}, the difficult part of the proof is proving that 
    \[ \eqref{eq:adv ssp rsas' - sarec} \Rightarrow \cP = \times_{(s,a) \in \X \times \A} \cP_{sa}.\]
    To do so, we follow the same lines as for the proof of Theorem \ref{th:reformulation adv ssp rsas' - srec}. We start by showing that $\times_{(s,a) \in \X \times \A} \ext(\cP_{sa}) \subset \cP$. The exact same reasoning as in the proof of Theorem \ref{th:reformulation adv ssp rsas' - srec} shows that for any $\twhat{\bm{P}} \in \times_{(s,a) \in \X \times \A} \ext(\cP_{sa})$, we can find a sequence of points $\left(\twhat{\bm{P}}^{n}\right)_{n \in \N}$ such that $\twhat{\bm{P}}^{n} \in \cP$ for all $n \in \N$ and such that $\lim_{n \rightarrow + \infty} \twhat{\bm{P}}^{n} = \twhat{\bm{P}}$, from which we conclude that $\twhat{\bm{P}} \in \cP$ since $\cP$ is compact. Concluding that $\times_{(s,a) \in \X \times \A} \cP_{sa} \subset \cP$ follows by taking the convex hull on both sides of the inclusion
    \[\times_{(s,a) \in \X \times \A} \ext(\cP_{sa}) \subset \cP \]
    and therefore we can conclude that $\cP = \times_{(s,a) \in \X \times \A} \cP_{sa}.$
    \hfill \Halmos
\end{proof}
\end{APPENDICES}
\section{Proofs for comparing value functions}\label{app:proof value functions}
In this appendix, we provide proof for the comparisons of the value functions $\bm{v}^{\pi,\bm{P}},\bm{u}^{\pi}$ and $\bm{u}\opt$. The proofs of the results in this section follow from classical arguments from the RMDP literature and we only include them for completeness. The gist of the proofs in this section relies on: 
\begin{enumerate}
    \item All the operators in this paper being {\em order-preserving}, with an operator $F:\R^{\X} \rightarrow \R^{\X}$ being order-preserving if $\bm{v} \leq \bm{w} \Rightarrow F(\bm{v}) \leq F(\bm{w})$ for any $\bm{v},\bm{w} \in \R^{\X}$.
    \item All the operators in this paper being {\em contractions}, with an operator $F:\R^{\X} \rightarrow \R^{\X}$ being a contraction (with factor $\gamma <1$) if $\| F(\bm{v}) - F(\bm{w})\|_{\infty} \leq \gamma \|\bm{v} - \bm{w}\|_{\infty}$ for any $\bm{v},\bm{w} \in \R^{\X}$. If $F$ is a contraction, then it admits a unique fixed point, and this fixed point is the limit of $\left(F^{k}(\bm{v}_{0})\right)_{k \in \N}$ for any $\bm{v}_{0} \in \R^{\X}$.
\end{enumerate}
\begin{lemma}\label{lem:ordering u's} 
Let $(\pi,\bm{P}) \in \PiS \times \cP$. Then $\twhat{\bm{u}}^{\pi} \leq \bm{u}^{\pi} \leq \bm{v}^{\pi,\bm{P}}$.
\end{lemma}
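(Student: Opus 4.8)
The plan is to exploit two structural facts shared by the three operators $T^{\pi}_{\bm{P}}$, $T^{\pi}$ and $\twhat{T}^{\pi}$ introduced in Section~\ref{sec:preliminaries}: each is an order-preserving $\gamma$-contraction on $\R^{\X}$, and they are ordered \emph{pointwise}, namely $\twhat{T}^{\pi}(\bm{v}) \leq T^{\pi}(\bm{v}) \leq T^{\pi}_{\bm{P}}(\bm{v})$ for every $\bm{v} \in \R^{\X}$ and every $\bm{P} \in \cP$. The lemma then follows from a general comparison principle for fixed points of monotone contractions.

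First I would verify the pointwise ordering. Fix $\bm{v} \in \R^{\X}$, $s \in \X$ and $\bm{P} \in \cP$, and recall that $\bm{r}_{sa} = (r_{sas'})_{s' \in \X} \in \R^{\X}$. For the right inequality, $\bm{P}$ is a feasible point in the minimization defining $T^{\pi}(\bm{v})_{s}$, hence
\[ T^{\pi}(\bm{v})_{s} = \min_{\bm{Q} \in \cP} \sum_{a \in \A} \pi_{sa}\bm{Q}_{sa}\tr(\bm{r}_{sa}+\gamma\bm{v}) \leq \sum_{a \in \A}\pi_{sa}\bm{P}_{sa}\tr(\bm{r}_{sa}+\gamma\bm{v}) = T^{\pi}_{\bm{P}}(\bm{v})_{s}. \]
For the left inequality, I use that for nonnegative weights a sum of per-action minima is at most the minimum of the sum: for any $\bm{R} \in \cP$, $\sum_{a} \pi_{sa}\min_{\bm{Q}\in\cP}\bm{Q}_{sa}\tr(\bm{r}_{sa}+\gamma\bm{v}) \leq \sum_{a}\pi_{sa}\bm{R}_{sa}\tr(\bm{r}_{sa}+\gamma\bm{v})$, and taking the minimum over $\bm{R}$ on the right gives
\[ \twhat{T}^{\pi}(\bm{v})_{s} = \sum_{a \in \A}\pi_{sa}\min_{\bm{Q}\in\cP}\bm{Q}_{sa}\tr(\bm{r}_{sa}+\gamma\bm{v}) \leq \min_{\bm{R}\in\cP}\sum_{a \in \A}\pi_{sa}\bm{R}_{sa}\tr(\bm{r}_{sa}+\gamma\bm{v}) = T^{\pi}(\bm{v})_{s}. \]
Monotonicity and the $\gamma$-contraction property of all three operators are classical and recalled at the beginning of this appendix.

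Next I would record the comparison principle: if $F,G \colon \R^{\X} \to \R^{\X}$ are order-preserving $\gamma$-contractions with $F(\bm{v}) \leq G(\bm{v})$ for all $\bm{v} \in \R^{\X}$, and $\bm{v}_{F},\bm{v}_{G}$ are their unique fixed points, then $\bm{v}_{F} \leq \bm{v}_{G}$. Indeed, $F(\bm{v}_{G}) \leq G(\bm{v}_{G}) = \bm{v}_{G}$, so $\bm{v}_{G}$ is a super-solution of $F$; iterating $F$ and using monotonicity gives $F^{k+1}(\bm{v}_{G}) \leq F^{k}(\bm{v}_{G}) \leq \cdots \leq \bm{v}_{G}$ for all $k$, and letting $k\to\infty$ together with $F^{k}(\bm{v}_{G}) \to \bm{v}_{F}$ yields $\bm{v}_{F} \leq \bm{v}_{G}$.

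Finally I would apply this principle twice. With $(F,G) = (T^{\pi},T^{\pi}_{\bm{P}})$, whose fixed points are $\bm{u}^{\pi}$ and $\bm{v}^{\pi,\bm{P}}$, it gives $\bm{u}^{\pi} \leq \bm{v}^{\pi,\bm{P}}$; with $(F,G) = (\twhat{T}^{\pi},T^{\pi})$, whose fixed points are $\twhat{\bm{u}}^{\pi}$ and $\bm{u}^{\pi}$, it gives $\twhat{\bm{u}}^{\pi} \leq \bm{u}^{\pi}$. Chaining the two inequalities proves $\twhat{\bm{u}}^{\pi} \leq \bm{u}^{\pi} \leq \bm{v}^{\pi,\bm{P}}$, as claimed. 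I do not expect any genuine obstacle here; the only step requiring a little care is the pointwise inequality $\twhat{T}^{\pi}(\bm{v}) \leq T^{\pi}(\bm{v})$, i.e. that moving the minimization over $\cP$ inside the action sum can only decrease the value, which is exactly the ``sum of minima at most minimum of sum'' observation used above.
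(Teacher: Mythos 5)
Your proposal is correct and follows essentially the same route as the paper: both establish that the larger operator's fixed point is a super-solution of the smaller operator (via the pointwise ordering $\twhat{T}^{\pi}(\bm{v}) \leq T^{\pi}(\bm{v}) \leq T^{\pi}_{\bm{P}}(\bm{v})$) and then iterate the order-preserving contraction to conclude. Your only cosmetic difference is packaging the iteration step as an explicit general comparison principle, which the paper instead applies inline at the two specific fixed points.
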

\begin{proof}{Proof of Lemma \ref{lem:ordering u's}.}
    Let $s \in \X$. We have 
    \begin{align*}
        T^{\pi}(\bm{v}^{\pi,\bm{P}})_{s} = \sum_{a \in \A} \sum_{a \in \cA} \pi_{sa}\bm{P}_{sa}\tr\left(\bm{r}_{sa}+\gamma \bm{v}^{\pi,\bm{P}}\right) \geq \min_{\bm{P} \in \cP} \sum_{a \in \A} \sum_{a \in \cA} \pi_{sa}\bm{P}_{sa}\tr\left(\bm{r}_{sa}+\gamma \bm{v}^{\pi,\bm{P}}\right) = v^{\pi,\bm{P}}_{s}.
    \end{align*}
    Therefore, $T^{\pi}(\bm{v}^{\pi,\bm{P}}) \leq \bm{v}^{\pi,\bm{P}}$. Since $T^{\pi}$ is order-preserving, we can apply iteratively this operator on both sides of this last inequality to obtain that $\bm{u}^{\pi} \leq \bm{v}^{\pi,\bm{P}}$.

    To show $\twhat{\bm{u}}^{\pi} \leq \bm{u}^{\pi}$ we simply notice that $\twhat{T}^{\pi}(\bm{u}^{\pi}) \leq T^{\pi}(\bm{u}^{\pi}) = \bm{u}^{\pi}$ and we use the same argument as for proving the previous inequality.
    \hfill \Halmos
\end{proof}

\begin{lemma}\label{lem: u star dominates}
We have $\bm{u}\opt \geq \bm{u}^{\pi}, \forall \; \pi \in \PiS$. 
\end{lemma}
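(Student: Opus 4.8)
\textbf{Proof proposal for Lemma \ref{lem: u star dominates}.} The plan is to mimic exactly the argument used in Lemma \ref{lem:ordering u's}, exploiting that $T$ dominates each $T^{\pi}$ pointwise together with the order-preservation and contraction properties of $T^{\pi}$. Fix an arbitrary $\pi \in \PiS$. The first step is to observe that, directly from the definition of $T$ in \eqref{eq:operator T} as $T(\bm{v})_{s} = \max_{\pi' \in \PiS} T^{\pi'}(\bm{v})_{s}$, we have $T^{\pi}(\bm{v}) \leq T(\bm{v})$ for every $\bm{v} \in \R^{\X}$. Applying this at $\bm{v} = \bm{u}\opt$ and using that $\bm{u}\opt$ is the fixed point of $T$ gives
\[ T^{\pi}(\bm{u}\opt) \leq T(\bm{u}\opt) = \bm{u}\opt. \]

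Next I would iterate the order-preserving operator $T^{\pi}$ on both sides of this inequality. Since $T^{\pi}$ is order-preserving (Appendix \ref{app:proof value functions}), the inequality $T^{\pi}(\bm{u}\opt) \leq \bm{u}\opt$ implies $\left(T^{\pi}\right)^{k+1}(\bm{u}\opt) \leq \left(T^{\pi}\right)^{k}(\bm{u}\opt)$ for all $k \in \N$, so the sequence $\left(\left(T^{\pi}\right)^{k}(\bm{u}\opt)\right)_{k \in \N}$ is non-increasing. Because $T^{\pi}$ is a $\gamma$-contraction, this sequence converges (regardless of monotonicity) to the unique fixed point $\bm{u}^{\pi}$ of $T^{\pi}$. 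Passing to the limit in the componentwise inequality $\left(T^{\pi}\right)^{k}(\bm{u}\opt) \leq \bm{u}\opt$ yields $\bm{u}^{\pi} \leq \bm{u}\opt$. Since $\pi \in \PiS$ was arbitrary, this proves $\bm{u}\opt \geq \bm{u}^{\pi}$ for all $\pi \in \PiS$, as claimed.

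I do not anticipate any real obstacle here: the statement is a standard policy-domination fact for the (s-rectangular-extension) Bellman operator, and every ingredient — the pointwise bound $T^{\pi} \leq T$, order-preservation, and the contraction/fixed-point machinery — is either immediate from the definitions or already recorded in Appendix \ref{app:proof value functions}. The only point requiring a line of care is that convergence of $\left(T^{\pi}\right)^{k}(\bm{u}\opt)$ to $\bm{u}^{\pi}$ comes from the contraction property and not from monotonicity, so that taking limits in the inequality is legitimate; monotonicity is used only to make the argument transparent and is not strictly needed.
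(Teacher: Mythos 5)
Your proposal is correct and follows essentially the same route as the paper's proof: establish $T^{\pi}(\bm{u}\opt) \leq T(\bm{u}\opt) = \bm{u}\opt$ from the definition of $T$ as a maximum over policies, then iterate the order-preserving contraction $T^{\pi}$ and pass to its unique fixed point $\bm{u}^{\pi}$. You merely spell out the monotone-iteration and limit-taking step in more detail than the paper does, which is fine.
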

\begin{proof}{Proof of Lemma \ref{lem: u star dominates}.}
The proof is similar to the proof of Lemma \ref{lem:ordering u's}.
    Let $\pi \in \PiS$. We have
    \begin{align*}
        T^{\pi}(\bm{u})_{s} = \min_{\bm{P} \in \cP} \sum_{a \in \cA} \pi_{sa}\bm{P}_{sa}\tr\left(\bm{r}_{sa}+\gamma \bm{u}\opt\right) \leq  \max_{\pi \in \PiS} \min_{\bm{P} \in \cP} \sum_{a \in \cA} \pi_{sa}\bm{P}_{sa}\tr\left(\bm{r}_{sa}+\gamma \bm{u}\opt\right) = T(\bm{u}\opt)_{s} = u_{s}\opt.
    \end{align*}
    Therefore, $T^{\pi}(\bm{u}\opt) \leq \bm{u}\opt$. Applying iteratively the operator $T^{\pi}$ on each side of this inequality and using that $\bm{u}^{\pi}$ is the fixed point of $T^{\pi}$, we obtain that $\bm{u}^{\pi} \leq \bm{u}\opt$. 
    \hfill \Halmos
\end{proof}
\section{Proof of Lemma \ref{lem:strong duality implies stationary opt policies}}\label{app:proof duality implies opt stationary policies}
\begin{proof}{Proof of Lemma \ref{lem:strong duality implies stationary opt policies}.}
We have
\begin{align*}
 \sup_{\pi \in \PiH} \inf_{\bm{P} \in \cP} 
 \bm{\mu}\tr\bm{v}^{\pi,\bm{P}} \leq  \inf_{\bm{P} \in \cP} \sup_{\pi \in \PiH} 
 \bm{\mu}\tr\bm{v}^{\pi,\bm{P}}  \leq \inf_{\bm{P} \in \cP} \max_{\pi \in \PiS} 
 \bm{\mu}\tr\bm{v}^{\pi,\bm{P}}  & = \max_{\pi \in \PiS} \min_{\bm{P} \in \cP}  
 \bm{\mu}\tr\bm{v}^{\pi,\bm{P}} \\
 &\leq \sup_{\pi \in \PiH} \inf_{\bm{P} \in \cP} 
 \bm{\mu}\tr\bm{v}^{\pi,\bm{P}}
\end{align*}
where the first inequality comes from weak duality, the second inequality comes from the inner problem being an MDP and therefore $\sup_{\pi \in \PiH} 
 \bm{\mu}\tr\bm{v}^{\pi,\bm{P}} = \max_{\pi \in \PiS} 
 \bm{\mu}\tr\bm{v}^{\pi,\bm{P}}$ for any fixed $\bm{P} \in \cP$, the equality comes from assuming strong duality, and the last inequality comes from $\PiS \subset \PiH$. We conclude that all terms in the equations above are equal.     \hfill \Halmos
\end{proof}
\section{Examples for Section \ref{sec:discussion}}\label{app:example discussion}
First, we note that the existence (or absence) of stationary optimal policies may depend on the initial distribution.
\begin{proposition}[Existence of stationary optimal policy may depend on initial distribution]\label{prop:existence of stat opt pol depend on mu}
    There exists a non-rectangular robust MDP instance with six states such that:
    \begin{itemize}
        \item If the RMDP starts in State $1$, then there is a unique optimal policy, and it is history-dependent.
        \item if the RMDP starts in State $4$, then there is a unique optimal policy and it is stationary.
    \end{itemize}
\end{proposition}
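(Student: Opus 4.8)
The plan is to take $\mathcal M$ to be the disjoint union of two sub-MDPs that share no state and no transition: a \emph{history-dependent gadget} $\mathcal M_1$ carried by the states $\{1,2,3\}$ and a \emph{deterministic gadget} $\mathcal M_2$ carried by the states $\{4,5,6\}$. Because no transition crosses between the two gadgets, the uncertainty set factorizes as $\cP=\cP^{(1)}\times\cP^{(2)}$, where $\cP^{(1)}$ constrains only the transitions internal to $\{1,2,3\}$ and $\cP^{(2)}$ only those internal to $\{4,5,6\}$; in particular $\cP$ is non-rectangular as soon as $\cP^{(1)}$ is. Hence, when the process starts in State $1$ the reachable states are exactly $\{1,2,3\}$ and $\inf_{\bm P\in\cP}\bm\mu\tr\bm v^{\pi,\bm P}$ depends only on the restriction of $\pi$ to $\{1,2,3\}$ and on $\cP^{(1)}$, and symmetrically when the process starts in State $4$. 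As is standard, uniqueness of the optimal policy is read on the reachable part of the state space and of the history tree, the behaviour elsewhere being irrelevant to the objective; if one insists on literal uniqueness one may instead let the two gadgets share the two absorbing sink states, at the price of slightly heavier bookkeeping.

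For $\mathcal M_2$ I would take an instance with no internal uncertainty ($\cP^{(2)}$ a singleton): State $4$ has two actions, $a_1$ moving deterministically to an absorbing State $5$ and $a_2$ moving deterministically to an absorbing State $6$, with the stage rewards chosen so that playing $a_1$ at State $4$ yields a strictly larger continuation value than playing $a_2$ (for instance, reward $1$ on the self-loop at State $5$ and $0$ everywhere else in $\mathcal M_2$). The robust problem restricted to $\mathcal M_2$ is then an ordinary discounted MDP; starting from State $4$ this state is visited exactly once (at time $0$), so a history-dependent rule there coincides with a stationary one, and any deviation --- a different action, or any nontrivial randomization at State $4$ --- strictly decreases the return since $\gamma\in(0,1)$ and the two continuations differ. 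Thus the unique optimal policy when starting in State $4$ is the stationary deterministic policy playing $a_1$ at State $4$, which is the second bullet.

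For $\mathcal M_1$ I would adapt the classical non-rectangular construction of \cite{wiesemann2013robust} producing a strict gap $\max_{\pi\in\PiS}\inf_{\bm P}\bm\mu\tr\bm v^{\pi,\bm P}<\sup_{\pi\in\PiH}\inf_{\bm P}\bm\mu\tr\bm v^{\pi,\bm P}$ (a single coupling parameter links the adversary's choices at two states, so a history-dependent decision-maker can exploit information it gathers about that parameter), bringing it onto three states and making the gadget absorbing after finitely many steps in the spirit of Remark~\ref{rmk:finite horizon}, so that the policies relevant to the optimization form a product of finitely many simplices. The strict gap already rules out any stationary optimal policy, so it only remains to secure a \emph{unique} optimal history-dependent policy; for this I would add a small generic perturbation to the rewards and use that, on the absorbing gadget, the worst-case return of a history-dependent policy is an explicit piecewise-affine function of finitely many decision variables whose argmax is then a single point $\pi^\star_{\sf H}$ --- necessarily not stationary --- which gives the first bullet. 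I expect this uniqueness claim to be the main obstacle: the literature provides strict-gap instances (history-dependent strictly beats stationary) rather than uniqueness statements over all of $\PiH$, so upgrading ``better than every stationary policy'' to ``the unique optimum'' requires either a careful reward perturbation that breaks every tie, or a direct enumeration of the finitely many decision patterns of a concrete small instance, all the while keeping the history-dependent gadget within the three-state budget.
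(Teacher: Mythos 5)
There is a genuine gap, and you have correctly identified where it is: your first bullet is not actually proved. Your plan hinges on a three-state non-rectangular gadget exhibiting both a strict gap between stationary and history-dependent policies \emph{and} a unique optimal policy in $\PiH$, but you construct neither. The difficulty is not only the tie-breaking you flag; it is that the "learn-then-exploit" mechanism that makes history-dependence strictly profitable requires the decision-maker to interact with the adversary's coupled parameter at two separate decision epochs with information revealed in between, and every known construction of this kind (including the one the paper leans on) needs more than three states to realize it. Indeed the paper's own Proposition \ref{prop:piopt depends on mu} already spends five states on a strictly weaker phenomenon. So the three-state budget for your history-dependent gadget is not just unjustified — it is quite possibly infeasible, and the whole disjoint-union architecture collapses without it. Separately, the disjoint union breaks literal uniqueness (behavior on the unreachable gadget is arbitrary); your proposed fixes (restrict uniqueness to the reachable part, or share sink states) are plausible but left as bookkeeping.

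The paper's proof avoids both problems by using a single nested instance rather than a disjoint union: it takes the six-state example of Figure 4 in \cite{wiesemann2013robust}, where Proposition 2 of that paper directly asserts that the optimal policy starting from State $1$ is unique and history-dependent, and observes that the sub-instance reachable from State $4$ is exactly their Figure 3 instance, for which their Proposition 1 gives a unique optimal policy that is stationary (randomized, not deterministic — the statement only requires stationarity, so your deterministic variant would also be acceptable if the rest worked). Because the State-$4$ sub-instance is embedded inside the State-$1$ instance, every state is reachable from State $1$ and uniqueness is literal, with no unreachable-part caveat. If you want to salvage your approach, the honest route is to drop the three-plus-three split and instead exhibit (or cite) a single instance in which the two behaviors coexist, which is precisely what the citation accomplishes.
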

\begin{proof}{Proof of Proposition \ref{prop:existence of stat opt pol depend on mu}}
    Consider the example from Figure 4 in \cite{wiesemann2013robust}. If the initial state is state 1, then Proposition 2 in \cite{wiesemann2013robust} shows that there is a unique optimal policy, and it is history-dependent. But if the initial state is state 4, we are actually in the same situation as the RMDP instance from Figure 3 in \cite{wiesemann2013robust}, and Proposition 1 in \cite{wiesemann2013robust} shows that there is a unique optimal policy and it can be chosen stationary (randomized). 
    \hfill \Halmos
\end{proof}

In fact we have the next proposition, where we show that even in robust MDPs where an optimal policy may be chosen stationary, this stationary policy may depend on the initial distribution.
\begin{proposition}[Stationary optimal policies may depend on initial distribution]\label{prop:piopt depends on mu}
   Consider the non-rectangular robust MDP instance presented in Figure \ref{fig:piopt depends on mu}.
    \begin{enumerate}
        \item If the RMDP starts in State $b$, then we can choose an optimal policy to be stationary.
        \item If the RMDP starts in State $a$, then we can choose an optimal policy to be stationary.
        \item There is no stationary policy that is optimal when starting in $a$ {\em and} when starting in $b$.
        \item The weak SSP~\eqref{eq:adv ssp - rsa - srec} does not hold.
    \end{enumerate}
\end{proposition}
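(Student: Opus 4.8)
The plan is to establish items~1--3 by a direct analysis of the instance in Figure~\ref{fig:piopt depends on mu}, and then to obtain item~4 as a formal consequence of item~3 together with the results already proved.

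First I would handle items~1 and~2. For each of the two starting states I would determine the sub-chain of states reachable under an arbitrary stationary policy, and restrict the policy-evaluation problem \eqref{eq:policy evaluation} to that sub-chain. The structural feature to exploit is that $\cP$ is non-rectangular precisely because one scalar parameter couples the transitions out of two distinct states (a gadget of the type used in \cite{wiesemann2013robust}); but on the part of the state space reachable from $b$ only one of those two ambiguous state--action pairs is visited, and from $a$ only the other. Hence, for a \emph{fixed} starting state, the infimum over $\bm{P}\in\cP$ collapses to a minimization over the extreme points of a product set, and $\min_{\bm{P}\in\cP}\bm{\mu}\tr\bm{v}^{\pi,\bm{P}}$ becomes an explicit piecewise-linear concave function of the randomization weights of $\pi$ at the ambiguous states. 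Maximizing it over $\PiS = \Delta(\A)^{\X}$ yields, for each starting state, an optimal stationary policy (which I expect to be \emph{randomized}, as in Proposition~1 of \cite{wiesemann2013robust}), together with explicit optimal randomization probabilities $p_a$ (starting at $a$) and $p_b$ (starting at $b$) at the shared ambiguous state. Because these stationary policies achieve the value of the full history-dependent problem, this also records the identities $\max_{\pi\in\PiS}\min_{\bm{P}\in\cP}\bm{\mu}\tr\bm{v}^{\pi,\bm{P}} = \sup_{\pi\in\PiH}\inf_{\bm{P}\in\cP}\bm{\mu}\tr\bm{v}^{\pi,\bm{P}}$ when $\bm{\mu}$ is concentrated at $a$ and when it is concentrated at $b$.

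For item~3 I would show $p_a\neq p_b$. The mechanism: when the process starts at $a$, the adversary commits the coupled parameter so as to damage the branch of the trajectory originating from $a$; when it starts at $b$, that branch is never visited and the same parameter is committed differently, which shifts the value trade-off at the shared ambiguous state and moves the optimal randomization weight. I would make this quantitative by substituting the $a$-optimal policy into the $b$-started evaluation and checking that it is \emph{strictly} suboptimal there (and symmetrically), so that no $\pi\in\PiS$ is optimal from both $a$ and $b$. This is the step I expect to be the main obstacle: one must track exactly how the single coupled parameter enters the two evaluation recursions and verify a strict inequality, i.e. confirm that the instance is chosen genuinely non-degenerately rather than accidentally forcing $p_a=p_b$.

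Finally, item~4 follows by contradiction. Suppose the weak SSP~\eqref{eq:adv ssp - rsa - srec} held. By Theorem~\ref{th:adv DPP, feasible upi, adv SSP equivalent - rsa - srec}, $\cP$ would then be weakly s-tractable, so by Corollary~\ref{cor: adv dpp implies max min opt independent of mu} there would be one stationary policy $\pi\opt\in\PiS$ lying in $\arg\max_{\pi\in\PiS}\min_{\bm{P}\in\cP}\bm{\mu}\tr\bm{v}^{\pi,\bm{P}}$ for \emph{every} $\bm{\mu}\in\Delta(\X)$, in particular for the distributions concentrated at $a$ and at $b$. Combining this with the two identities recorded in the proof of items~1--2 (stationary and history-dependent optima coincide for this instance when starting at $a$ and when starting at $b$), $\pi\opt$ would be optimal over $\PiH$ from both $a$ and $b$, contradicting item~3. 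Hence \eqref{eq:adv ssp - rsa - srec} fails. (Alternatively one could exhibit the witnessing pair $(\pi,\bm{V})$ directly from the two evaluation problems, taking $\bm{V}$ to be the relevant worst-case value vector, but the contradiction route is shorter and reuses quantities already computed.)
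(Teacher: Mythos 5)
Your reduction for items 1--3 rests on a structural claim that is false for this instance, and that falsity is precisely what makes the example work. You assert that from each starting state only one of the two coupled ambiguous state--action pairs is visited, so that the inner infimum ``collapses to a minimization over the extreme points of a product set.'' From $b$ this is true (only the transitions out of $b$ matter, and the paper's computation indeed reduces to a linear trade-off with optimum $\beta_3=1/2$). But from $a$ the trajectory reaches $b$ with probability $p$ and then undergoes a second $p$-dependent transition, so \emph{both} coupled transitions lie on the path; the return $\bm{\mu}\tr\bm{v}^{\pi,\bm{P}}$ is \emph{quadratic} in the shared parameter $p$, not linear, and the adversary's best response against the optimal policy is the interior point $p=3/4$, not an extreme point. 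An extreme-point analysis would therefore evaluate candidate policies incorrectly (for $\beta<1/2$ the quadratic is convex in $p$ with an interior minimizer) and could return the wrong optimizer. Relatedly, your expectation that both optimal stationary policies are randomized is off: in the paper's computation the optimum from $a$ is \emph{deterministic} at the shared state ($\beta=0$, value $7/96$ with $\gamma=1/4$), while the optimum from $b$ is randomized ($\beta_3=1/2$, value $0$). Finally, you explicitly defer the quantitative verification that the two optimal randomization weights differ, which is the entire content of item 3; the paper carries out this case analysis in full, and without it the proposition is not proved.

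Your argument for item 4 is a genuinely different and valid route, conditional on item 3: assuming the weak SSP~\eqref{eq:adv ssp - rsa - srec}, Theorem~\ref{th:adv DPP, feasible upi, adv SSP equivalent - rsa - srec} gives weak s-tractability and Corollary~\ref{cor: adv dpp implies max min opt independent of mu} then yields a single stationary policy in $\arg\max_{\pi\in\PiS}\min_{\bm{P}\in\cP}\bm{\mu}\tr\bm{v}^{\pi,\bm{P}}$ for every $\bm{\mu}$, contradicting item 3. (The instance's rewards depend only on the current state, so the weak-tractability framework applies.) The paper instead exhibits an explicit witness, $\bm{V}=(0,0,1,1,0)$, for which the two argmin sets over $p$ are $\{1\}$ and $\{0\}$; that version is constructive and independent of items 1--3, whereas yours is shorter but inherits the gap above.
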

\begin{proof}{Proof of Proposition \ref{prop:piopt depends on mu}.} We consider an MDP instance with $5$ states in $\X = \{a,b,c,d,e\}$ and two actions $a_{1}$ and $a_{2}$ to be chosen in State $a$ and State $b$. The transition probabilities are parametrized with a single parameter $p \in [0,1]$ and are represented in Figure \ref{fig:piopt depends on mu}. We parameterize policies by $(\alpha,\beta_1,\beta_2,\beta_3) \in [0,1]^{4}$, with 
\begin{itemize}
    \item $\alpha$ the probability to choose $a_{1}$ in State $a$.
    \item $\beta_1$ the probability to choose $a_{1}$ in State $b$, if the history is $(a,a_{1},b)$.
    \item $\beta_2$ the probability to choose $a_{1}$ in State $b$, if the history is $(a,a_{2},b)$.
    \item $\beta_3$ the probability to choose $a_{1}$ in State $b$, if the history is $(b)$.
\end{itemize}
We identify transition probabilities $\bm{P} \in \cP$ with scalar $p \in [0,1]$. Given an initial distribution $\bm{\mu} \in \Delta(\X)$ with $\X = \{a,b,c,d,e\}$, we can write the return of a policy $\pi$ as
\begin{align*}
    (1-\gamma)R(\pi,\bm{P}) = & \mu_{a}\left(\frac{\gamma}{2}(1-p) + \gamma^2 p (1-2p)(2(\alpha \beta_1 + 
     (1-\alpha)\beta_2)-1)\right)  \\ 
     & +  (\mu_{e}-\mu_{d}+\frac{1}{2}\mu_{c}) + \mu_{b}\gamma (2\beta_3-1)(1-2p).
\end{align*}
\begin{enumerate}
    \item {\bf The case $\mu_{b}=1$.} This is the case considered in Proposition 1/Figure 3 in \cite{wiesemann2013robust}. We include the proof of our conclusion here for completeness. With $\mu_{b}=1$, we have 
    \[ (1-\gamma)R(\pi,\bm{P}) = \gamma (2\beta_3-1)(1-2p).\]
    Since the return only depends on $\beta_3$, we can always choose a stationary optimal policy, and we find that $\max_{\beta_3 \in [0,1]} \min_{p \in [0,1]} \gamma (2\beta_3-1)(1-2p)$ is attained for $\beta_3 = 1/2$, i.e., the set of optimal policies is $\{ (\alpha,\beta_1,\beta_2,1/2) \; | \; \alpha,\beta_1,\beta_2 \in [0,1]\}$ and the set of stationary optimal policies is $\{ (\alpha,1/2,1/2,1/2) \; | \; \alpha \in [0,1]\}$, with an optimal objective of $\max_{\pi \in \PiS} \min_{\bm{P} \in \cP} R(\pi,\bm{P}) = 0.$
    \item {\bf The case $\mu_{a}=1$.}
    In this case 
    \[ (1-\gamma)R(\pi,\bm{P}) = \frac{\gamma}{2}(1-p) + \gamma^2 p (1-2p)(2(\alpha \beta_1 + (1-\alpha)\beta_2)-1).\]
    We note that for any policy $(\alpha,\beta_1,\beta_2,\beta_3)$, possibly history-dependent, there exists a stationary policy $\pi'=(\alpha,\beta(\alpha),\beta(\alpha),\beta(\alpha))$ with $\beta(\alpha) = \alpha \beta_1 + (1-\alpha)\beta_2$ such that
    \[ (1-\gamma)R(\pi,\bm{P}) = (1-\gamma)R(\pi',\bm{P}).\]
    Additionally, given a stationary policy $(\alpha,\beta,\beta,\beta)$, the return is
    \[ (1-\gamma)R(\pi,\bm{P}) = \frac{\gamma}{2}(1-p) + \gamma^2 p (1-2p)(2\beta-1).\]
For the rest of this example, we choose $\gamma = 1/4$, for which we obtain
\begin{align*}
R(\pi,\bm{P}) & = \frac{1}{6} \left((1-p) + \frac{1}{2} p (1-2p) (2\beta-1)\right)= \frac{1}{6} \left( 1 + p \left(\beta - \frac{3}{2}\right) + p^2 \left(1 - 2\beta)\right)\right).
\end{align*}
We now inspect different ranges for the values of $\beta$.
\begin{itemize}
\item When $\beta \geq 1/2$, the function $p \mapsto 1 + p \left(\beta - \frac{3}{2}\right) + p^2 \left(1 - 2\beta)\right)$ is concave since $1-2\beta \leq 0$, so that its minimum over $p \in [0,1]$ is attained at $p \in \{0,1\}$, i.e., 
\[ \min_{p \in [0,1]} R(\pi,\bm{P}) = \frac{1}{6} \min \{R(\pi,0),R(\pi,1)\} = \frac{1}{6} \{1, \frac{1}{2}-\beta\} = \frac{1}{6}\left(\frac{1}{2}-\beta\right)\]
i.e., the worst-case is always attained at $p=1$, and
$\max_{\beta \in [1/2,1]} \min_{p \in [0,1]} R(\pi,\bm{P}) = \max_{\beta \in [1/2,1] } \frac{1}{6}\left(\frac{1}{2}-\beta\right) = 0$, attained at $\beta=1/2$.
\item Take $\beta=0$. We obtain
$R(\pi,\bm{P}) = \frac{1}{6} \left(1 - \frac{3}{2}p+p^2\right).$ 
The function $p \mapsto 1 - \frac{3}{2}p+p^2$ attains its minimum at $p=3/4$ for which
\[ \min_{p \in [0,1]} R(\pi,\bm{P})  = R(\pi,3/4) = \frac{1}{6}\left(1 - \frac{3}{2}\frac{3}{4} + \frac{9}{16}\right) = \frac{7}{96}>0.\]
This suffices to show that $\beta=1/2$ is not an optimal solution when $\mu_a = 1$, despite being an optimal policy when $\mu_b = 1$. 
\item We now study the case $\beta \leq 1/2$. The function $p \mapsto 1 + p \left(\beta - \frac{3}{2}\right) + p^2 \left(1 - 2\beta)\right)$ attains its minimum on $\R$ at \[p^{*} = \frac{\frac{3}{2}-\beta}{2(2\beta-1)}.\]
Note that we always have $p^{*} >0$, and $p^* \leq 1 \iff \beta < \frac{1}{6}$. Therefore for $\beta \in [1/6,1/2]$, $p \mapsto 1 + p \left(\beta - \frac{3}{2}\right) + p^2 \left(1 - 2\beta)\right)$ attains its minimum on $[0,1]$ at $p=1$, and
$\min_{p \in [0,1]} R(\pi,\bm{P})  = R(\pi,1) = \frac{1}{6}\left(\frac{1}{2} - \beta\right)$ so that 
\[ \max_{\beta \in [1/6,1/2]}  \min_{p \in [0,1]} R(\pi,\bm{P}) = \max_{\beta \in [1/6,1/2]} \frac{1}{6}\left(\frac{1}{2} - \beta\right) = \frac{1}{6}\left(\frac{1}{2} - \frac{1}{6}\right) = \frac{1}{18}.\]

For $\beta \in [0,1/6]$, we have
\begin{align*}
\min_{p \in [0,1]} R(\pi,\bm{P}) = R(\pi,p^*) & = \frac{1}{6}1 + p^* \left(\beta - \frac{3}{2}\right) + p^{* 2} \left(1 - 2\beta)\right) = \frac{1}{6}\left(1-\frac{1}{4}\frac{\left(\frac{3}{2}-\beta\right)^2}{1-2\beta} \right)
\end{align*}
The function $\beta \mapsto \frac{\left(\frac{3}{2}-\beta\right)^2}{1-2\beta}$ is increasing over $[0,1/6]$, so that the maximum of $\min_{p \in [0,1]} R(\pi,\bm{P})$ is attained at $\beta=0$, for which we already computed that
$\min_{p \in [0,1]} R(\pi,\bm{P}) = \frac{7}{96}.$
\end{itemize}
Overall, we conclude that 
$\max_{\beta \in [0,1]} \min_{p \in [0,1]} R(\pi,\bm{P}) = \frac{7}{96}$, 
attained at $\beta=0$ with a worst-case transition probabilities of $p=\frac{3}{4}$.
Therefore, the set of stationary optimal policies when $\mu_a = 1$ is $\{(\alpha,0,0,0) \; | \; \alpha \in [0,1]\}$.
    \item For $\mu_a = 1$, the set of stationary optimal policies  is $\{(\alpha,0,0,0) \; | \; \alpha \in [0,1]\}$. For $\mu_b = 1$ stationary optimal policies are $\{(\alpha,1/2,1/2,1/2) \; | \; \alpha \in [0,1]\}$, and these two sets are disjoint.
    \item Let $\bm{v} = (0,0,1,1,0)$.
\begin{itemize}
\item For State $s=\{a\}$ we have $\bm{P}_{sa_{1}} = \bm{P}_{sa_{2}} =(0,p,1-p,0,0)$ so that
\[ \arg \min_{\bm{P} \in \cP} \sum_{a \in \A} \pi_{sa}\bm{P}_{sa}\tr\bm{v} = \arg \min_{p \in [0,1]} p v_{b} + (1-p) v_{c} = \arg \min_{p \in [0,1]} (1-p) = \{1\}\]
where for the sake of conciseness we identify $p \in [0,1]$ and $\bm{P} \in \cP$.
\item For State $s=\{b\}$ we have $\bm{P}_{sa_{1}} = (0,0,0,p,1-p),\bm{P}_{sa_{2}} = (0,0,0,1-p,p)$, so that 
\[ \arg \min_{\bm{P} \in \cP} \sum_{a \in \A} \pi_{sa}\bm{P}_{sa}\tr\bm{v} = \arg \min_{p \in [0,1]} p v_{d} + (1-p) v_{e} = \arg \min_{p \in [0,1]} p = \{0\}\]
and therefore
$\cap_{s \in \X} \arg \min_{\bm{P} \in \cP} \sum_{a \in \A} \pi_{sa}\bm{P}_{sa}\tr\bm{v} = \emptyset.$
\end{itemize}
\end{enumerate}
\hfill \Halmos
\end{proof}
    \begin{figure}
\begin{center}
    \begin{subfigure}{0.4\textwidth}
\includegraphics[width=\linewidth]{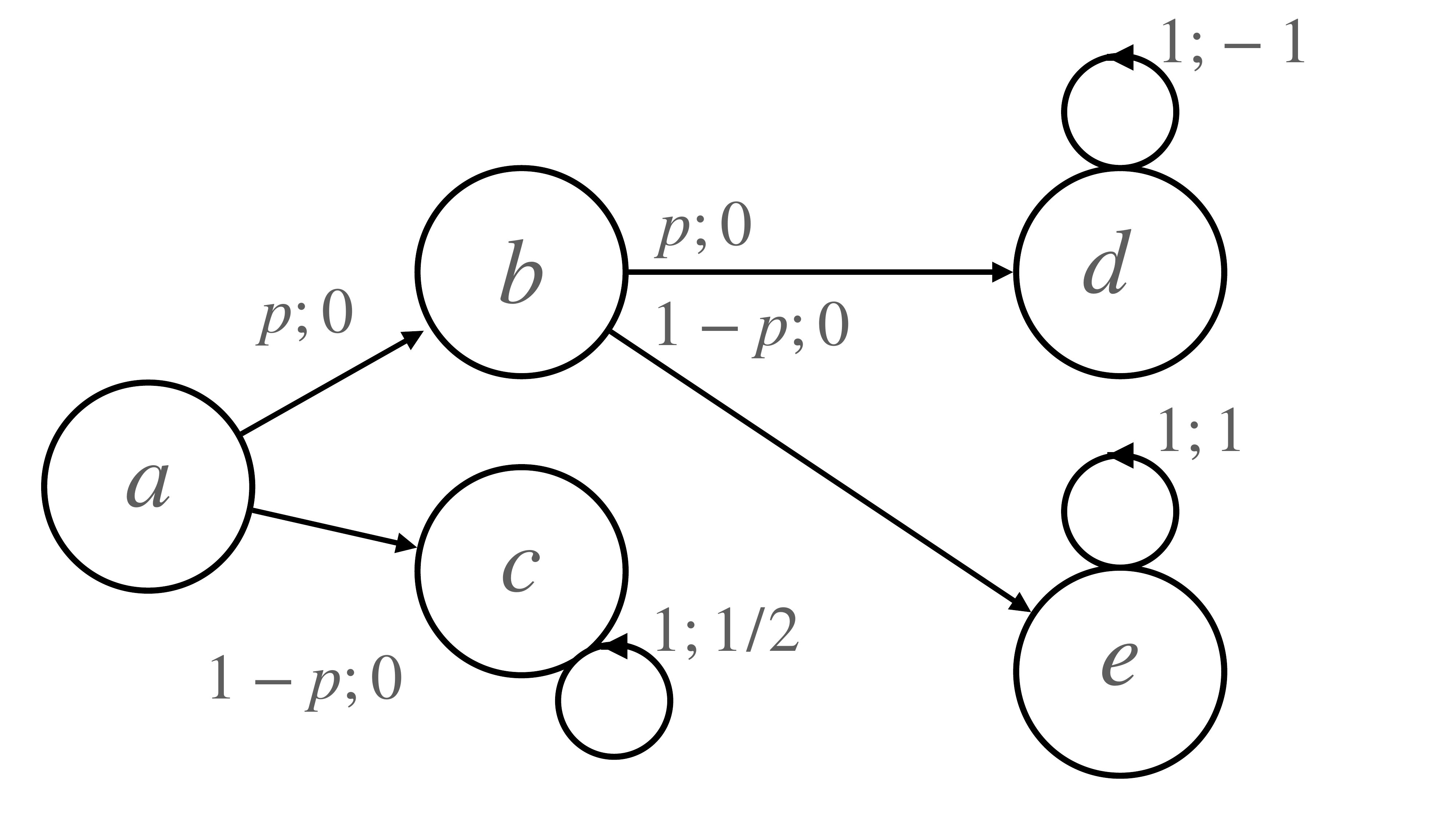}
    \caption{Transition for action $a_1$}
    \label{fig:rmdp_a1}
    \end{subfigure}
    \begin{subfigure}{0.4\textwidth}
\includegraphics[width=\linewidth]{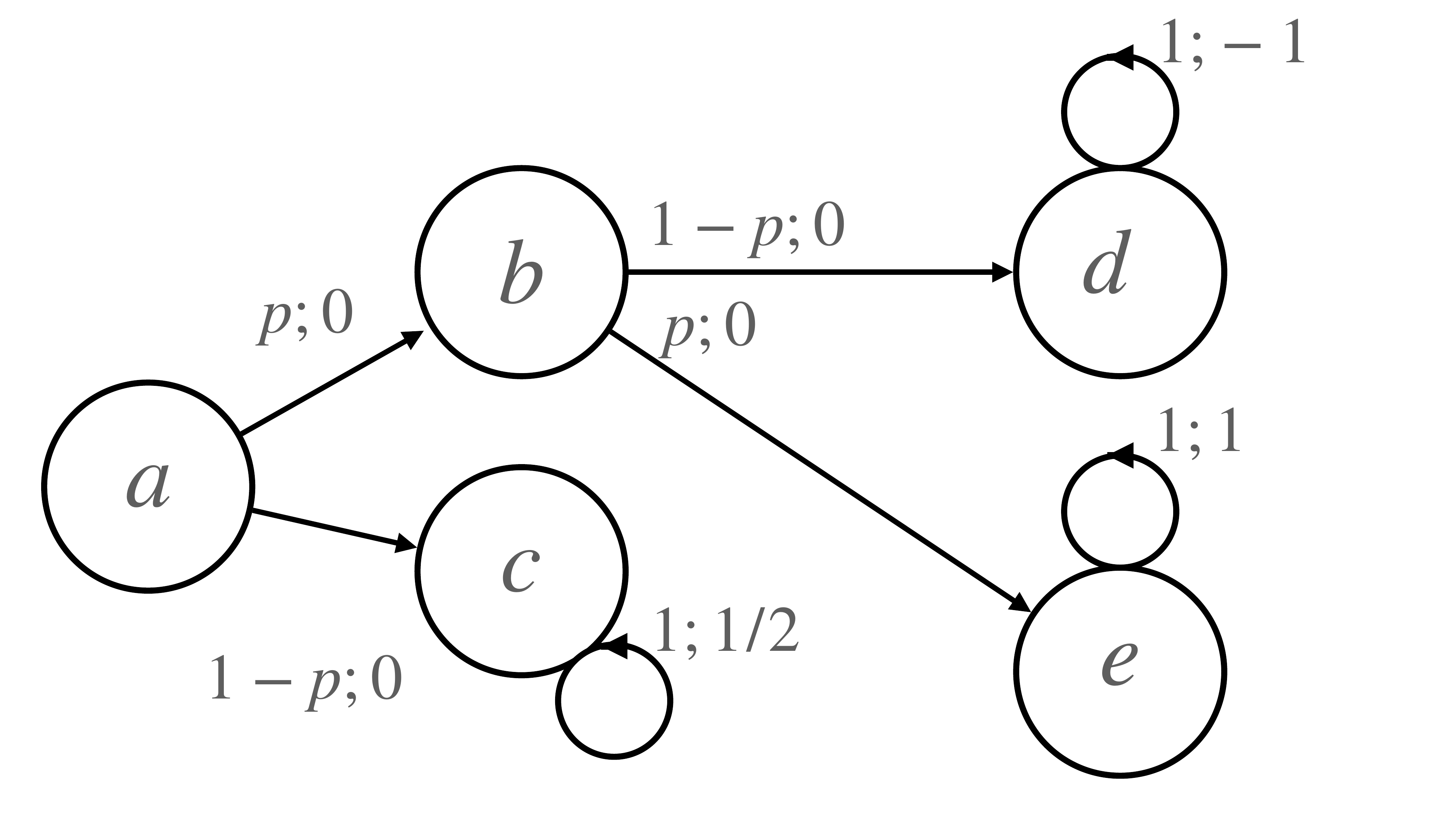}
    \caption{Transition for action $a_2$}
    \label{fig:rmdp_a2}
    \end{subfigure}
    \end{center}
\caption{Transitions and rewards for the RMDP instance for Proposition \ref{prop:piopt depends on mu}.}\label{fig:piopt depends on mu}
\end{figure}

\end{document}